\documentclass[a4paper]{amsart}

\usepackage{amsfonts,amssymb,amscd,amsmath,latexsym,amsbsy,enumitem,stmaryrd,a4wide,verbatim,color,subfiles,multirow}
\usepackage{hyperref}
\usepackage{tikz}
\usetikzlibrary{calc,arrows,backgrounds,positioning,fit,shapes.misc,shapes.geometric}

\usetikzlibrary{decorations.markings}
\usepackage[foot]{amsaddr}

\theoremstyle{plain}
\newtheorem{theorem}{Theorem}[section]
\newtheorem{corollary}[theorem]{Corollary}

\newtheorem{lemma}[theorem]{Lemma}
\newtheorem{proposition}[theorem]{Proposition}
\newtheorem{Definition}[theorem]{Definition}
\theoremstyle{remark}
\newtheorem{remark}[theorem]{Remark}
\numberwithin{equation}{section}

\newcommand{\R}{\mathbb R}
\newcommand{\N}{\mathbb N}
\newcommand{\C}{\mathbb C}
\newcommand{\Z}{\mathbb Z}

\newcommand{\ASEP}[1]{\textnormal{ASEP}(#1)}
\newcommand{\RASEP}[1]{\textnormal{ASEP}_{\mathsf{R}}(#1)}
\newcommand{\LASEP}[1]{\textnormal{ASEP}_{\mathsf{L}}(#1)}
\newcommand{\SEP}[1]{\textnormal{SSEP}(#1)}
\newcommand{\RSEP}[1]{\textnormal{SSEP}_{\mathsf{R}}(#1)}
\newcommand{\LSEP}[1]{\textnormal{SSEP}_{\mathsf{L}}(#1)}
\newcommand{\al}{\alpha}
\newcommand{\be}{\beta}
\newcommand{\ga}{\gamma}

\newcommand{\genASEP}{L_{q,\vec{N}}}
\newcommand{\genRASEP}{L_{q,\vec{N},\rho}^{\mathsf R}}
\newcommand{\genLASEP}{L_{q,\vec{N},\lambda}^{\mathsf L}}
\newcommand{\de}{\delta}
\newcommand{\De}{\Delta}

\newcommand{\half}{\frac{1}{2}}

\newcommand{\la}{\lambda}

\newcommand{\om}{\omega}
\newcommand{\Om}{\Omega}
\newcommand{\pitensor}{\pi_{k,k+1}}
\newcommand{\pitensortwo}{\pi_{1,2}}
\newcommand{\tensor}{\otimes}
\newcommand{\rphis}[5]{\,_{#1}\varphi_{#2} \left( \genfrac{.}{.}{0pt}{}{#3}{#4}
\ ;#5 \right)}
\newcommand{\rphisempty}[2]{\,_{#1}\varphi_{#2}}
\newcommand{\rFs}[5]{\,_{#1}F_{#2} \left( \genfrac{.}{.}{0pt}{}{#3}{#4}	\ ;#5 \right)}

\newcommand{\su}{\mathfrak{su}}

\newcommand{\U}{\mathcal U}

\newcommand{\qbinom}[3]{\genfrac{[}{]}{0pt}{0}{#2}{#3}_{#1}}

\newcommand{\rev}{\textnormal{rev}}

\begin{document}
\title[A Generalized Dynamic Asymmetric Exclusion Process]{A Generalized Dynamic Asymmetric Exclusion Process: Orthogonal Dualities and Degenerations}
\author{Wolter Groenevelt} 
\author{Carel Wagenaar} 
\address{Delft Institute of Applied Mathematics\\
	Delft University of Technology, PO Box 5031, 2600 GA Delft, The Netherlands}
\email{W.G.M.Groenevelt@tudelft.nl, C.C.M.L.Wagenaar@tudelft.nl (corresponding author)}
\begin{abstract}
	In this paper, a generalized version of dynamic ASEP is introduced, and it is shown that the process has a Markov duality property with the same process on the reversed lattice. The duality functions are multivariate $q$-Racah polynomials, and the corresponding orthogonality measure is the reversible measure of the process. By taking limits in the generator of dynamic ASEP, its reversible measure, and the duality functions, we obtain orthogonal and triangular dualities for several other interacting particle systems. In this sense,  the duality of dynamic ASEP sits on top of a hierarchy of many dualities.\\
	\indent For the construction of the process, we rely on representation theory of the quantum algebra $\U_q(\mathfrak{sl}_2)$. In the standard representation, the generator of generalized ASEP can be constructed from the coproduct of the Casimir. After a suitable change of representation, we obtain the generator of dynamic ASEP. The corresponding intertwiner is constructed from $q$-Krawtchouk polynomials, which arise as eigenfunctions of twisted primitive elements. This gives a duality between dynamic ASEP and generalized ASEP with $q$-Krawtchouk polynomials as duality functions. Using this duality, we show the (almost) self-duality of dynamic ASEP. 
\end{abstract}
\maketitle

\section{Introduction}
In this paper, we study a continuous-time interacting particle process that we call generalized dynamic ASEP (asymmetric simple exclusion process). This process can be considered as a higher spin version of dynamic ASEP introduced by Borodin \cite{Bo}, as well as a dynamic version of generalized ASEP introduced by Carinci, Giardin\`a, Redig, and Sasomoto \cite{CGRS}. Here the term `dynamic' essentially means that the jump rates of the process depend on a height function corresponding to the particle process. We show that generalized dynamic ASEP is dual to the same process on the reversed lattice with orthogonal duality functions that can be expressed as multivariate $q$-Racah polynomials, i.e.~a multivariate Askey-Wilson polynomial on a discrete set, where the orthogonality is with respect to the reversible measure of the process. Several interesting interacting particle processes appear as limit cases of generalized dynamic ASEP. Taking limits to these processes leads to (orthogonal) dualities for several other interacting particle processes.

In the analysis of interacting particle processes Markov duality is a very useful property. Duality allows us to study a complicated system in terms of a simpler one. Furthermore, as a consequence of duality expectations of certain observables evolve according to specific systems of differential equations. For standard ASEP, in which one particle per site is allowed, self-duality was first obtained by Sch\"utz \cite{Sch}. For its generalized version from \cite{CGRS}, in which multiple particles are allowed on each site, self-duality is also obtained. For both processes duality played a role in showing they belong to the KPZ universality class \cite{BoCoSa}, \cite{CoShTs}. Also, dynamic ASEP has a duality property: in \cite{BoCo} it is shown to be in duality with standard ASEP. Furthermore, there are other generalizations of standard ASEP with duality. E.g.~in \cite{Kuan2018}, \cite{KuanMultiSpecies}, Kuan obtained self-duality functions for a multi-species generalization of ASEP.

In recent years several symmetric interacting particle systems were shown to have products of hypergeometric orthogonal polynomials as duality functions. For example, Franceschini and Giardin\`a \cite{FrGi2019} showed that Krawtchouk polynomials arise as self-duality functions for the generalized symmetric exclusion process. Other orthogonal polynomials, such as Meixner polynomials, Laguerre polynomials, and Hermite polynomials also appear as (self-)duality functions \cite{CarFraGiaGroRed}, \cite{FraGiaGro}, \cite{Gr2019}, \cite{ReSau2018}, \cite{Zh}. The advantage of such orthogonal dualities is that they form an orthogonal basis for the underlying Hilbert space, which greatly simplifies the expansion of observables in terms of the duality function. This was used in \cite{AyaCarRed1}, \cite{AyaCarRed2} to study Bolzmann-Gibbs principles and higher order fluctuation fields, and in \cite{FlReSau} in the study of $n$-point correlation functions in non-equilibrium steady states. 

Very recently orthogonal dualities were also obtained for asymmetric particle processes. In \cite{CFG} Carinci, Franceschini, and the first author show that certain $q$-Krawtchouck and $q$-Meixner polynomials, which are $q$-hypergeometric orthogonal polynomials. appear as duality functions for generalized ASEP and ASIP (asymmetric simple inclusion process). Because of the asymmetry, the products of the polynomials have a nested structure, which links them to the multivariate orthogonal polynomials of Tratnik-type from \cite{GasRahMulti}. The results from \cite{FraGiaGro} were extended to multi-species versions of generalized ASEP in \cite{BlBuKuLiUsZh},\cite{FraKuaZho}, where also nested products of $q$-Krawtchouk polynomials appear as duality functions. 

In the present paper, we show that generalized dynamic ASEP has nested products of $q$-Racah polynomials (also called Askey-Wilson polynomials) as duality functions. These polynomials are generalizations of $q$-Krawtchouck polynomials, thus we obtain the duality results from \cite{FraGiaGro} for generalized ASEP, as well as several other dualities, as special cases. The $q$-Racah polynomials form the top level of the discrete part of the $q$-Askey-scheme of $q$-hypergeometric orthogonal polynomials \cite{KLS}, which is a large scheme that relates families of orthogonal polynomials through specializations and limit relations. There are no families of orthogonal polynomials above the $q$-Racah polynomials in the $q$-Askey-scheme, therefore we consider it unlikely that more general orthogonal polynomial dualities will be found for one-species exclusion processes. As another interpretation for Askey-Wilson polynomials in relation to ASEP, we mention that they appear as correlation functions for standard ASEP with open boundaries \cite{UchSasWad}. 

For the construction of duality functions, we use representation theory of the quantum algebra $\U_q(\mathfrak{sl}_2)$. In \cite{CGRS} the generator of generalized ASEP is constructed from the Casimir element of $\U_q(\mathfrak{sl}_2)$. Through a change of representations (of a subalgebra) we obtain the generator for generalized dynamic ASEP from the Casimir, thus also obtaining a duality function between the two processes from the corresponding intertwiner. This duality result can then be extended to (almost) self-duality of generalized dynamic ASEP. Although we make extensive use of the algebra $\U_q(\mathfrak{sl}_2)$, we will first define the process and state the Markov duality (and corresponding duality functions) without reference to the algebraic construction, so that the statement of the main results of the paper requires no background knowledge on quantum algebras and representation theory.\\

Let us now describe the main character of the paper, generalized dynamic ASEP, in somewhat more detail. The process is a Markov jump process that lives on a 1-dimensional finite lattice. The sites are numbered from 1, the leftmost site, to $M$, the rightmost site. Particles can only jump to neighboring sites. For each site $k$, we let $N_k\in\N$ denote the maximum number of particles allowed on that site and let $\vec{N}$ be the vector containing these $N_k$. Then, for a scaling parameter $q>0$, we propose two closely related versions of generalized dynamic ASEP. Namely a `right' version $\RASEP{q,\vec{N},\rho}$, and a `left' version $\LASEP{q,\vec{N},\lambda}$. The first has jump rates consisting of the rates of generalized ASEP, denoted by $\ASEP{q,\vec{N}}$, times a factor which depends on the particles and free spaces on the right of a site and a right boundary value $\rho\in\R$ via a height function `$h^+$'. The jump rates of the second are a product of the $\ASEP{q,\vec{N}}$ rates and a factor depending on a height function `$h^-$', which depends on the particles and free spaces on the left of a site and a left boundary value $\lambda\in\R$. The two different versions of generalized dynamic ASEP can be obtained from each other by reversing the order of the sites. The rates of generalized dynamic ASEP are invariant under the transformation $q\to q^{-1}$, so $q$ cannot simply be seen as an asymmetry parameter. Without loss of generality, we can assume $q\in (0,1)$. Then, if the height function for a site is very negative, the rates of $\RASEP{q,\vec{N},\rho}$ will be close to the ones of $\ASEP{q,\vec{N}}$, while a very positive height function causes the rates to be close to the ones of $\ASEP{q^{-1},\vec{N}}$. So in some sense, the parameter $q$ influences the local asymmetry. Moreover, we show that if there is only one particle in $\ASEP{q,\vec{N},\rho}$, it gets pulled towards the site(s) where the height function is close to zero. Numerical simulations suggest a similar behaviour in the case of many particles, i.e. particles in generalized dynamic ASEP are distributed around the region where the height function is close to zero, which implies that these sites are filled by half of their capacity.  \\

Both versions of generalized dynamic ASEP are dual to $\ASEP{q,\vec{N}}$, which we can use to express the expectation of the particle current in generalized dynamic ASEP in terms of the expectation of one dual particle in $\ASEP{q,\vec{N}}$. Since both versions of generalized dynamic ASEP are dual to $\ASEP{q,\vec{N}}$, they are dual to each other. Also, $\RASEP{q,\vec{N},\rho}$ generalizes both $\ASEP{q,\vec{N}}$ and $\ASEP{q^{-1},\vec{N}}$; the same is true for the left version $\LASEP{q,\vec{N},\lambda}$. Therefore, the Markov duality between the processes $\RASEP{q,\vec{N},\rho}$ and $\LASEP{q,\vec{N},\lambda}$ sits on top of a hierarchy of several other Markov dualities, see Figure \ref{fig:HierarchyMarkovdualities} for some of these cases. 

\begin{figure}[h] 
	\begin{tikzpicture}[scale=0.75, every node/.style={transform shape}]
		\tikzset{
			box/.style={rounded rectangle, minimum width=58mm, very thick,draw=black!50, top color=white,bottom color=black!20},
			myarrow/.style={color= black!40!white,ultra thick,->},
			myarrowlr/.style={color= black!40!white,ultra thick,<->},
			mylabel/.style={color=black, fill=white}
		}
		\node (dynASEP) [box] {	\textbf{1.} $\LASEP{q,\vec{N},\lambda} \leftrightarrow \RASEP{q,\vec{N},\rho}$};
		\node (dummy) [below=1.5cm of dynASEP] {};
		\node (ASEP-1-RASEP) [box,left=of dummy] {	\textbf{2a.} $\ASEP{q^{-1},\vec{N}} \leftrightarrow \RASEP{q,\vec{N},\rho}$};
		\node (ASEP-RASEP) [box,right=of dummy] {\textbf{2b.} $\ASEP{q,\vec{N}} \leftrightarrow \RASEP{q,\vec{N},\rho}$};
		\node (ASEP-ASEP-1) [box,below=1.5 of dummy] {	\textbf{3b.} $\ASEP{q^{-1},\vec{N}} \leftrightarrow \ASEP{q,\vec{N}}$};
		\node (ASEP-ASEP) [box,right=of ASEP-ASEP-1] {	\textbf{3c.} $\ASEP{q,\vec{N}}\leftrightarrow \ASEP{q,\vec{N}}$};
		\node (ASEP-1-ASEP-1) [box,left=of ASEP-ASEP-1] {	\textbf{3a.} $\ASEP{q^{-1},\vec{N}} \leftrightarrow \ASEP{q^{-1},\vec{N}}$};

		\draw [myarrow](dynASEP.200) -- (ASEP-1-RASEP)
		node[mylabel,midway]{$\lambda \to -\infty$};
		\draw [myarrow](dynASEP.340) -- (ASEP-RASEP)
		node[mylabel,midway]{$\lambda \to \infty$};
		\draw [myarrow](ASEP-1-RASEP) -- (ASEP-ASEP-1)
		node[mylabel,midway]{$\rho \to -\infty$\,};
		\draw [myarrow](ASEP-RASEP) -- (ASEP-ASEP)
		node[mylabel,midway]{$\rho \to -\infty$};
		\draw [myarrow](ASEP-RASEP.200) -- (ASEP-ASEP-1)
		node[mylabel,midway]{$\rho \to \infty$};
		\draw [myarrow](ASEP-1-RASEP) -- (ASEP-1-ASEP-1)
		node[mylabel,midway]{$\rho \to \infty$};
	\end{tikzpicture}\\
	\caption{Hierarchy of Markov dualities, where $q\in(0,1)$.}\label{fig:HierarchyMarkovdualities}
\end{figure}
The duality functions for duality 1 on top of Figure \ref{fig:HierarchyMarkovdualities} are given by a (nested) product of $q$-Racah (or Askey-Wilson) polynomials. As mentioned above, these are on top of the $q$-Askey-scheme \cite{KLS}, which means that many other orthogonal polynomials, such $q$-Krawtchouk polynomials, are special cases of these $q$-Racah polynomials. Taking limits in the parameters of the particle processes often preserves the duality. These limits correspond to certain limits in the $q$-Askey-scheme, therefore duality functions lower in the hierarchy in Figure \ref{fig:HierarchyMarkovdualities} are still orthogonal polynomials. On the other hand, limits in the $q$-Askey-scheme do not always correspond to useful limits of Markov generators. For each of the dualities 1, 2a, 3b, and 3c in Figure \ref{fig:HierarchyMarkovdualities} we list the corresponding type of duality function in Table \ref{tab:dualityfunctions}, where we make a distinction between duality functions with and without a free parameter. The latter is a parameter, independent of both Markov processes, that appears in a non-trivial way in the duality function. The dualities 2b and 3a can be obtained from other dualities by sending $q\to q^{-1}$. All duality functions given in the table are orthogonal polynomials with respect to the reversible measures of the processes, except the triangular ones. The term `triangular' means here that the duality functions when written in matrix form with respect to a certain basis, are lower triangular.\\
\begin{table}[h]
	\begin{tabular}{|l|ll|}
		\hline
		\multirow{2}{*}{Duality} & \multicolumn{2}{l|}{Type of duality function}                         \\  \cline{2-3} 
		& \multicolumn{1}{l|}{Free parameter} &No free parameter      \\ \hline
		\textbf{1}                     & \multicolumn{1}{l|}{$q$-Racah}            &    Special case $q$-Racah     \\ \hline
		\textbf{2a}                     & \multicolumn{1}{l|}{$q$-Hahn}            & $q$-Krawtchouk         \\ \hline
		\textbf{3b}                       & \multicolumn{1}{l|}{Affine $q$-Krawtchouk}      & Triangular  \\ \hline
		\textbf{3c}                       & \multicolumn{1}{l|}{Quantum $q$-Krawtchouk}      & Triangular \\ \hline
	\end{tabular}\vspace{0.1cm}\\
	\caption{Duality functions corresponding to Figure \ref{fig:HierarchyMarkovdualities}}\label{tab:dualityfunctions}.\vspace{-0.8cm}
\end{table}

The $q$-Askey-scheme has a $q\to 1$ counterpart consisting of hypergeometric orthogonal polynomials. Considering corresponding limits in the particle processes will lead to orthogonal dualities for symmetric (dynamic) exclusion processes.\\

\medskip

\subsection{Outlook}
In this paper, we mainly study dualities of the new generalized dynamic ASEP and degenerate cases. We intend to investigate the new processes in more detail in a future paper. It would be interesting to consider stochastic PDE limits of generalized dynamic ASEP, as e.g. is done in \cite{CoGhMa}. Furthermore, it will be intriguing to investigate the relation between generalized dynamic ASEP, introduced in this paper, and the recently introduced \cite{KuaZho} higher spin versions of dynamic stochastic vertex models. Moreover, the standard ASEP with open boundaries, i.e. where particles can enter and leave the system at sites $1$ and $M$, has been studied recently \cite{BaCo,Oh,Sch2}. One could also consider generalized dynamic ASEP with open boundaries, where particles leaving and entering the system could have a global effect on the height function. Another extension might be to consider generalized dynamic ASEP on a ring, i.e. where sites $1$ and $M$ are connected. Given that particles in generalized dynamic ASEP can have a preference to move towards regions with a higher particle density, it would be interesting to investigate whether uphill diffusion might appear when considering generalized dynamic ASEP with open boundaries or on a ring. Uphill diffusion is a phenomenon where there is a particle current flowing from a lower particle density towards a higher one, see e.g. \cite{CirCol,ColGibVer}. Finally, we plan to report on a corresponding inclusion process, dynamic ASIP, and dualities in the very near future. This corresponds to representation theory of the non-compact quantum algebra $\U_q(\su(1,1))$, which was already shown to have a connection with ASIP \cite{CFG}, \cite{CarGiaRedSas}. By taking a suitable limit from dynamic ASIP, one can also obtain a dynamic version of the Asymmetric Brownian Energy Process (ABEP).\\

\subsection{Outline of the paper}
The organization of this paper is as follows. In Section \ref{sec:dynamic ASEP} we briefly recall and discuss dynamic ASEP defined in \cite{Bo}. Then in Section \ref{sec:GeneralizedDynASEP} we introduce two closely related higher spin versions of dynamic ASEP, show their reversibility, and state Markov dualities between these processes and generalized ASEP. The dualities are 1, as well as 2a and 2b (the last two without a free parameter) from Figure \ref{fig:HierarchyMarkovdualities}. The proof of these results is postponed until Sections \ref{sec:ConstructionDynASEP} and \ref{sec:LeftDynASEP}, but stating the results does not require those techniques. In Section \ref{sec:Degenerations} we will investigate degenerations of this duality by taking appropriate limits of the duality 1, showing all the dualities in Figure \ref{fig:HierarchyMarkovdualities} as well as dualities the for totally asymmetric zero range process (TAZRP) and (dynamic) symmetric exclusion processes. Up to this point, no knowledge of quantum algebras is required. In section \ref{sec:QuantumAlgebra-Krawtchouk} we introduce the quantum algebra $\U_q(\mathfrak{sl}_2)$ as well as the $q$-Krawtchouk polynomials, which are eigenfunctions of a realization of Koornwinder's twisted primitive elements. Then in Section \ref{sec:ConstructionDynASEP}, we construct the generator of generalized dynamic ASEP from generalized ASEP by a change of representation of the coproduct of the Casimir. The important observation here is that the generator of generalized ASEP acting on the degree of the $q$-Krawtchouk polynomials can be transferred to an action on its variable, giving the generator of generalized dynamic ASEP. This method of construction automatically gives Markov duality between the two processes (with $q$-Krawtchouk polynomials as orthogonal duality functions) as well as reversibility of generalized dynamic ASEP. In Section \ref{sec:LeftDynASEP} we then show that generalized dynamic ASEP is (almost) self-dual with duality functions given by a (doubly) nested product of $q$-Racah polynomials. In Section \ref{sec:calculations of limits} we carry out the explicit limit calculations to prove the results on degenerations of the $q$-Racah dualities which are stated in Sections \ref{sec:Degenerations} and \ref{sec:symmetric degenerations}. In the appendix, we give an overview of all duality functions appearing in this paper and their explicit description, and we state and prove several useful identities we make use of elsewhere in the paper.

\subsection{Preliminaries and notations}
Let us start with the definition of Markov duality. Let $\{X(t)\}_{t\geq0}$ and $\{\widehat{X}(t)\}_{t\geq0}$ be Markov processes with state spaces $\Omega$ and $\widehat{\Omega}$ and generators $L$ and $\widehat{L}$. We say that $X(t)$ and $\widehat{X}(t)$ are dual to each other with respect to a continuous duality function $D:\Omega\times\widehat{\Omega}\to \C$ if
\[		
	[L D(\cdot,\xi)](\eta)=[\widehat{L} D(\eta,\cdot)](\xi)
\]  
for all $\eta\in \Omega$ and $\xi\in\widehat{\Omega}$. If $\{\widehat{X}(t)\}_{t\geq0}$ is a copy of $\{X(t)\}_{t\geq0}$, we say that the process $\{X(t)\}_{t\geq0}$ is self-dual with respect to the duality function $D$.\\

For future reference, we also make the following remark.
\begin{remark}\label{rem:InvTotPart}
	Let $L$ and $\widehat{L}$ be generators of interacting particle systems where the total number of particles is conserved (as all processes in this paper will be). Then we have the following two basic results. 
	\begin{itemize}
		\item Let $D(\eta,\xi)$ be a duality function between the two processes. 
		If $f$ is a function only depending on parameters of the processes and the total number of (dual) particles $|\eta|$ and $|\xi|$, then $f(|\xi|,|\eta|)D(\eta,\xi)$ is again a duality function since $f$ is invariant under the action of both generators.
		\item Let $\mu$ be a reversible measure for the process generated by $L$, i.e.~ detailed balanced is satisfied:
		\[
		\mu(\eta)L(\eta,\eta') = \mu(\eta')L(\eta',\eta),
		\] 
		where $L(\eta,\eta')$ is the jump rate from the state $\eta$ to $\eta'$. Note that both sides of the above equation become zero if $|\eta|\neq |\eta'|$ since in that case $L(\eta,\eta')=0$. If $g$ is a function only depending on parameters of the process and the total number of particles $|\eta|$, then $g(|\eta|)\mu(\eta)$ is again a reversible measure since we can just multiply above detailed balance condition by $g(|\eta|)=g(|\eta'|)$.
	\end{itemize} 
\end{remark}
Let us introduce some notations and conventions we use throughout the paper. We fix a scaling parameter $q>0$, where we will sometimes require $q\in(0,1)$. By $\N$ we denote all positive integers,
\[
\Z_{\geq 0}=\N \cup \{0\} \qquad \text{and}  \qquad \R^\times = \R\backslash\{0\}.
\]
For $a\in\R$, let 
\[
[a]_q=\begin{cases}
	\begin{split}&\frac{q^a-q^{-a}}{q-q^{-1}} &\text{ for } q\neq 1,\\
	&a &\text{ for } q= 1,\end{split}
\end{cases}
\]
which is justified by
\begin{align*}
	\lim\limits_{q\to1}[a]_q=a . \label{eq:limshq}
\end{align*}
We use standard notation for $q$-shifted factorials and $q$-hypergeometric functions as in \cite{GR}. In particular, $q$-shifted factorials are given by
\[
(a;q)_n = (1-a)(1-aq) \cdots (1-aq^{n-1}), \qquad n \in \N 
\]
and we use the convention $(a;q)_0=1$. For $q \in (0,1)$, $(a;q)_\infty = \lim_{n \to \infty} (a;q)_n$. Moreover, for $q\neq1$, $n \in \Z_{\geq 0}$ and $k=0,\ldots,n$ the $q$-binomial coefficient is given by
\[
\qbinom{q}{n}{k} = \frac{	(q;q)_n}{	(q;q)_k	(q;q)_{n-k}} = \frac{(q^{-n};q)_k}{(q;q)_k} (-q^n)^k q^{-\frac12k(k-1)},
\]
The $q$-hypergeometric series $_{r+1}\varphi_r$ is given by
\[
\rphis{r+1}{r}{a_1,\ldots,a_{r+1}}{b_1,\ldots,b_r}{q,z} = \sum_{n=0}^\infty \frac{(a_1;q)_n \cdots (a_{r+1};q)_n}{(b_1;q)_n \cdots (b_r;q)_n} \frac{z^n}{(q;q)_n}.
\]
If for some $k$ we have $a_k=q^{-N}$ with $N \in \Z_{\geq 0}$, the series terminates after $N+1$ terms, since $(q^{-N};q)_n=0$ for $n>N$.

The shifted factorials are given by
\[
(a)_0=1, \qquad (a)_n = a(a+1)\cdots (a+n-1), \qquad n \in \N,  
\]
and the hypergeometric series $_{r+1}F_r$ is defined by
\[
\rFs{r+1}{r}{a_1,\ldots,a_{r+1}}{b_1,\ldots,b_r}{z} = \sum_{n=0}^\infty \frac{(a_1)_n \cdots (a_{r+1})_n}{(b_1)_n \cdots (b_r)_n}\frac{z^n}{n!},
\]
where the series terminates if $a_k \in -\N$ for some $k$. The following limit relations hold: the $q$-shifted factorials become shifted factorials,
\[
\lim_{q \to 1} \frac{ (a;q)_n }{(1-q)^n} = (a)_n, \qquad a \in \R, \ n \in \Z_{\geq 0},
\]
the $q$-binomial coefficient becomes the ordinary binomial coefficient,
\[
\qbinom{1}{n}{k}=\lim\limits_{q\to1} \qbinom{q}{n}{k} =\binom{n}{k}.
\]
and, for $a_1,\ldots,a_{r}, b_1,\ldots,b_r \in \R$ and $n \in \Z_{\geq 0}$,
\[
\lim_{q \to 1} \rphis{r+1}{r}{q^{-n},q^{a_1}, \ldots, q^{a_{r}}}{q^{b_1},\ldots,q^{b_r}}{q,z} = \rFs{r+1}{r}{-n,a_1,\ldots,a_{r}}{b_1,\ldots,b_r}{z}.
\]

For an ordered $M$-tuple $x=(x_1,\ldots,x_M)$ we denote by $x^\rev$ the reversed $M$-tuple,
\[
x^\rev = (x_M,\ldots,x_1).
\]
and by $|x|$ the sum over its elements,
\[
|x| = x_1+\ldots+x_M.
\]

All interacting particle processes and functions in this paper will depend on certain parameters. To simplify notation we suppress the dependence on the parameters in notations, but occasionally add one or more parameters in the notation to stress dependency on the included parameters. \\

\section{Dynamic ASEP} \label{sec:dynamic ASEP}
The dynamic asymmetric exclusion process is introduced in \cite{Bo} as a limit case of a stochastic Interaction-Round-a-Face model and has been further studied in \cite{BoCo,CoGhMa}. The dynamic ASEP is a continuous-time Markov process on the state space $\mathcal S = \{(h_k)_{k \in \Z} \mid h_k \in \Z, \ h_{k+1}-h_k=\pm 1 \}$. An element $(h_k)_{k\in\Z}$ in $\mathcal S$ can be considered as a height function on the real line that takes integer values at integers and has slope $\pm 1$ in between. The jumps of the process are independent and have exponential waiting times with rates depending on two parameters $q>0$ and $\alpha>0$:
\[
\begin{split}
	h_k &\mapsto h_k + 2 \quad  \text{at rate} \quad q^{-1} \frac{ 1+ \alpha q^{-2h_k} }{1+\alpha q^{-2h_k-2}}, \\
	h_k &\mapsto h_k -2 \quad \text{at rate} \quad q \frac{ 1+ \alpha q^{-2h_k} }{1+\alpha q^{-2h_k+2}}.
\end{split}
\]
Jumps that take a height function out of the state space are of course not allowed and therefore have rate equal to zero. Let us remark that the rates here are normalized slightly differently from \cite{Bo}. Furthermore, we can always set the parameter $\al$ equal to $1$ by shifting the height function by $\rho=\frac12\log_q(\al)$; in this case, $h_k \in \rho+\Z$ for all $k$ and the rates are invariant under the transformation $q\to q^{-1}$.

In the next section, we propose a higher spin generalization of dynamic ASEP. However, before we introduce the generalized process, it will be convenient to consider the following interacting particle process on $M\in\N$ sites which is equivalent to the above-described dynamic ASEP for height functions on $\Z \cap [1,M]$. \\
In \cite{BoCo}, a slope increment of $-1$ was associated with a particle and a slope increment of $1$ with an empty site. Note that if we would go in the opposite direction (from right to left), a slope increment of $1$ is associated with a particle and a slope increment of $-1$ with an empty site, which is in our setting a more convenient view of looking at it. To be precise, let $\xi_k$ be the number of particles on site $k$, which is either 0 or 1, then $\xi_k$ is determined by 
\[
	h_k=h_{k+1}+1\ \text{ if }\ \xi_k=1 \qquad \text{and} \qquad h_k=h_{k+1}-1\ \text{ if }\ \xi_k=0,
\]
or in shorter notation,
\begin{align*}
	h_{k}=h_{k+1}+(2\xi_k-1).
\end{align*}
This implies that the height function $(h_k)_{k=1}^M$ is a function of $\xi=(\xi_k)_{k=1}^M \in \{0,1\}^M$. We fix the height function to be equal to a real number $\rho$ at the `virtual' site $M+1$,
\begin{align}
	h_{M+1}=\rho, \label{eq:boundaryvalue}
\end{align}
which can be considered as a boundary value (on the right). Then for $k=1,\ldots,M+1$, the height function is given by
\[
h_k=h_{k,\rho}(\xi) =  \rho+\sum_{j=k}^M (2\xi_j-1),
\]
where we suppress the dependence on $\rho$ and/or $\xi$, unless we explicitly need it. We adopt the convention that the empty sum equals zero so that the boundary value \eqref{eq:boundaryvalue} holds. Let us mention that
\[
\sum_{j=k}^{M}(2\xi_j-1)=\sum_{j=k}^{M}\xi_j - \sum_{j=k}^{M}(1-\xi_j)
\] 
is the sum of the particles per site minus the sum of the free spaces per site. \\

We mention here that in a given configuration $\xi$, the height function on site $1$ is also constant and equal to $\rho + 2|\xi|-M$. So if one fixes a boundary $\rho'$ on a left virtual site $0$ (i.e.~ $h'_0=\rho'$), we obtain essentially the same interacting particle system if one takes  $h'_k=\rho' + \sum_{j=1}^k(1-2\xi_j)$. However, we choose the convention that in the height function, we sum over the particles per site minus the sum over the free spaces per site (instead of the free spaces per site minus the particles per site), which implies that we need to impose a boundary value on the right.\\

Alternatively, a similar (but different) process can be defined where in the height function we sum over the particles per site minus the free spaces per site on the \textit{left} of and including site $k$\footnote{We emphasize that this is a different height function than $h'_k$ defined in the previous paragraph.}, which we do in section \ref{sec:LeftDynASEP}. For this reason, we distinguish between those cases by adding a superscript $+$ (or $-$) to the height function when summing over particles on the right (or left). Thus we write for $\lambda,\rho\in\R$,
\[
h^+_k=h^+_{k,\rho}(\xi)=\rho+\sum_{j=k}^M (2\xi_j-1)
\] 
and
\[
h^-_k=h^-_{k,\lambda}(\xi)=\lambda+\sum_{j=1}^k (2\xi_j-1).
\]
For now, let us focus on the version with $h^+_k$. The jump rates for that corresponding particle process are now given by 
\[
\begin{split}
	\xi & \mapsto \xi^{k-1,k} \quad  \text{at rate} \quad q^{-1} \frac{ 1+ q^{-2h^+_k} }{1+q^{-2h^+_k-2}}, \\
	\xi &\mapsto \xi^{k,k-1} \quad \text{at rate} \quad q \frac{ 1+ q^{-2h^+_k} }{1+ q^{-2h^+_k+2}},
\end{split}
\]
where $\xi^{k-1,k} = (\ldots,\xi_{k-1}-1,\xi_{k}+1,\ldots )$ and $\xi^{k,k-1} = (\ldots,\xi_{k-1}+1,\xi_{k}-1,\ldots )$. 
\medskip

Let us assume $q \in (0,1)$, then in the limit $\rho \to -\infty$ dynamic ASEP becomes standard ASEP with jump rate $q^{-1}$ for jumps to the right and rate $q$ for jumps to the left. Moreover, in the limit $\rho \to \infty$ we obtain the same standard ASEP, but with $q$ replaced by $q^{-1}$.

\section{Generalized dynamic ASEP}\label{sec:GeneralizedDynASEP}
In this section, we introduce a higher spin version of dynamic ASEP and state several corresponding Markov dualities. The definition of the process is motivated by representation theory of the quantum algebra $\U_q(\mathfrak{sl}_2)$: the process generator is, up to an additive constant, the realization of sums of coproducts of the quantum Casimir element in a particular representation. However, in this section, all results are stated without reference to the representation theory. The proofs of the duality results stated in this section, which make use of this representation-theoretic interpretation of the generator, are postponed to later sections. 

\subsection{Generalized ASEP}
Before we introduce the higher spin version of dynamic ASEP let us first introduce $\ASEP{q,\vec{N}}$, which is a higher spin version of the standard ASEP, i.e.~each site allows a finite number of particles. This process was introduced in \cite{CGRS} in the case where each site allows the same number of particles. Here we define the process in a slightly more general way, namely the maximum number of particles may differ on each site. 

\medskip

For $k=1,\ldots,M$ let $N_k \in \N$ be the maximum number of particles allowed on site $k$, and denote $\vec{N}=(N_1,\ldots,N_M)$. The process $\ASEP{q,\vec{N}}$ is a continuous-time Markov jump process on the state space $X = \{0,\ldots,N_1\}\times \cdots \times \{0,\ldots,N_M\}$ depending on a parameter $q>0$. Given a state $\eta=(\eta_k)_{k=1}^M$, a particle on site $k$ jumps to site $k+1$ at rate 
\begin{equation} \label{eq:c^+_k}
	c_k^{+}(\eta)=q^{-(\eta_{k+1}+N_{k}-\eta_{k}+1)} [\eta_k]_q [N_{k+1}-\eta_{k+1}]_q,
\end{equation}
and a particle on site $k$ jumps to site $k-1$ at rate
\begin{equation} \label{eq:c^-_k}
	c^-_{k}(\eta) = q^{\eta_{k-1}+N_k-\eta_{k}+1} [\eta_{k}]_q[N_{k-1}-\eta_{k-1}]_q .
\end{equation}
The Markov generator of the process is then given by
\[
L_{q,\vec{N}}f(\eta) = \sum_{k=1}^{M-1} c^{+}_k(\eta) [f(\eta^{k,k+1})-f(\eta)] + c^-_{k+1}(\eta) [f(\eta^{k+1,k})-f(\eta)].
\]
\begin{remark}\* 
	\begin{itemize}
		\item By associating particles with free places and vice versa we get a symmetry between $\ASEP{q,\vec{N}}$ and $\ASEP{q^{-1},\vec{N}}$. That is, let $\{\eta(t)\}_{t\geq0}$ be the process that evolves according to $\ASEP{q,\vec{N}}$. If we define $\eta'=\vec{N}-\eta$, then $\{\eta'(t)\}_{t\geq0}$ evolves according to $\ASEP{q^{-1},\vec{N}}$. This symmetry can also be found in the duality functions involving those processes. 
		\item For $N_1=\ldots=N_M=2j$ with $j \in \frac12\N$, this becomes $\ASEP{q,2j}$ as defined in \cite{CGRS}. For \mbox{$N_1=\ldots=N_M=1$}, this is the standard ASEP where particles jump to the left with rate $q$ and to the right with $q^{-1}$.
	\end{itemize}
\end{remark}

\medskip

\subsection{Generalized dynamic ASEP}
Now we are ready to define a higher-spin version of dynamic ASEP. Similar to dynamic ASEP from the previous section, the rates can be written as a product of the rate of (generalized) ASEP and a factor containing the height function.
\begin{Definition} \label{Def:dynamic ASEP}
	$\RASEP{q,\vec{N},\rho}$ is a continuous-time Markov jump process on the state space $X$ depending on parameters $q>0$ and $\rho \in \R$. Given a state $\xi=(\xi_k)_{k=1}^M \in X$ we define the height function $(h^{+}_{k})_{k=1}^M$ by
	\[
	h^+_k= \rho + \sum_{j=k}^M (2\xi_j - N_j),
	\]
	and on the right we set the boundary value $h^+_{M+1} = \rho$. 
	Then a particle on site $k$ jumps to site $k+1$ at rate
	\[
	C^{\mathsf R,+}_k(\xi) = c^+_k(\xi) \frac{\left(1+q^{2\xi_k-2h^+_{k}}\right)\Big(1+q^{2\xi_{k+1}-2h^{+}_{k+1}}\Big)}{\Big(1+q^{-2h^{+}_{k+1}}\Big)\Big(1+q^{-2h^+_{k+1}-2}\Big)},
	\]	
	and a particle on site $k$ jumps to site $k-1$ at rate
	\[
	C^{\mathsf R,-}_{k}(\xi) = c^-_{k}(\xi) \frac{\Big(1+q^{-2\xi_{k-1}-2h^+_k}\Big)\Big(1+q^{-2\xi_{k}-2h^+_{k+1}}\Big)}{\Big(1+q^{-2h^+_k}\Big)\Big(1+q^{-2h^+_{k}+2}\Big)}.
	\]
\end{Definition} 
\begin{remark}
	Since 
	\[
	\sum_{j=k}^M (2\xi_j - N_j) = \sum_{j=k}^M \xi_j - \sum_{j=k}^M (N_j-\xi_j),
	\]
	this factor in the height function is again the sum of the particles per site minus the sum of the free spaces per site.
\end{remark}
\begin{remark}
	One can also rewrite the rates as
	\begin{align}
		\begin{split}
		C^{\mathsf R,+}_k(\xi) &= [\xi_k]_q[N_{k+1}-\xi_{k+1}]_q\frac{\big(q^{h_k^+-\xi_k}+q^{-(h_k^+-\xi_k)}\big)\big(q^{h_{k+1}^+-\xi_{k+1}}+q^{-(h_{k+1}^+-\xi_{k+1})}\big)}{\big(q^{h_{k+1}^+}+q^{-h_{k+1}^+}\big)\big(q^{h_{k+1}^++1}+q^{-(h_{k+1}^++1)}\big)},\\
		C^{\mathsf R,-}_{k}(\xi) &= [\xi_k]_q[N_{k-1}-\xi_{k-1}]_q\frac{\big(q^{h_k^++\xi_{k-1}}+q^{-(h_k^++\xi_{k-1})}\big)\big(q^{h_{k+1}^++\xi_{k}}+q^{-(h_{k+1}^++\xi_{k})}\big)}{\big(q^{h_{k}^+}+q^{-h_{k}^+}\big)\big(q^{h_{k}^+-1}+q^{-(h_{k}^+-1)}\big)}.
		\end{split}\label{eq:rewrittenrates}
	\end{align}
	From this we can see that the value of the parameter $q$ in $\RASEP{q,\vec{N},\rho}$ is not related to the asymmetry of the process as the jump rates $C^{\mathsf R,+}_k$ and $C^{\mathsf R,-}_{k}$ are invariant under $q \leftrightarrow q^{-1}$. In contrast, the rates $c^+_k$ and $c^-_{k}$ of $\ASEP{q,\vec{N}}$ are not $q\leftrightarrow q^{-1}$ invariant. 
\end{remark}
Note that, similar to dynamic ASEP in Section \ref{sec:dynamic ASEP}, this process can be written solely in terms of the height function $h^+_k$ by using
\[
h^+_{k}=h^+_{k+1}+2\xi_k - N_k.
\]
In this way, we can consider it as a process on the state space of height functions on $[1,M]\cap \Z$ taking values in $\rho+\Z$, for which the slope between $k$ and $k+1$ can take values in 
\[\{-N_k,-N_k+2,...,N_k-2,N_k\}.\]
The dynamic parameter $\rho$ can be considered as a boundary value on the right for the height function. The added label `R' stands for `right', indicating that there is a prescribed boundary value for the height function at the right boundary (at the virtual site $M+1$). Consequently, considered as an interacting particle process, the jump rates to and from site $k$ depend on the number of particles on the right of site $k$ through the values $h^+_k$ and $h^+_{k+1}$ of the height function. For later references we introduce the $\RASEP{q,\vec{N},\rho}$ Markov generator, which is given by
\[
L_{q,\vec{N},\rho}^{\mathsf R} f (\xi) = \sum_{k=1}^{M-1} C^{\mathsf R,+}_k(\xi) [f(\xi^{k,k+1})-f(\xi)] + C^{\mathsf R,-}_{k+1}(\xi) [f(\xi^{k+1,k})-f(\xi)].
\]

\begin{remark}[Special cases] \label{remark:special cases of ASEP_R}
	$\RASEP{q,\vec{N},\rho}$ reduces to other processes as follows:
	\begin{itemize}
		\item For $N_1=\ldots=N_L=1$ we recover standard dynamic ASEP from Section \ref{sec:dynamic ASEP}. Indeed, the height function $h^+_k$ changes to $h^+_k +2$ if a particle at site $k-1$ jumps to site $k$. This is only possible if $\xi_{k-1}=1$ and $\xi_{k}=0$, thus in that case
		\begin{align*}
			C^{\mathsf R,+}_{k-1}(\xi) = q^{-1} \frac{1+q^{-2h_{k}^+}}{1+q^{-2h_{k}^+-2}}.
		\end{align*}
		Similarly, we obtain that the height functions $h^+_k$ changes to $h^+_k-2$ with rate
		\begin{align*}
			C^{\mathsf R,-}_{k}(\xi) = q \frac{1+q^{-2h_{k}^+}}{1+q^{-2h_{k}^+ +2}}.
		\end{align*}
		\item Assume $q \in (0,1)$. In the limit $\rho \to -\infty$ $\RASEP{q,\vec{N},\rho}$ becomes $\ASEP{q,\vec{N}}$, which follows from
		\[
		\lim_{\rho \to -\infty} C^{\mathsf R,+}_k(\xi) = c^+_k(\xi) \quad \text{and} \quad \lim_{\rho \to -\infty} C^{\mathsf R,-}_{k}(\xi) = c^-_{k}(\xi).
		\]
		Moreover, in the limit $\rho \to \infty$ we have
		\[
		\lim_{\rho \to \infty} C^{\mathsf R,+}_k(\xi) = c^+_k(\xi;q^{-1}) \quad \text{and} \quad \lim_{\rho \to \infty} C^{\mathsf R,-}_{k}(\xi) = c^-_{k}(\xi;q^{-1}),
		\]
		so that $\RASEP{q,\vec{N},\rho}$ becomes $\ASEP{q^{-1},\vec{N}}$. In this sense $\RASEP{q,\vec{N},\rho}$ interpolates between $\ASEP{q,\vec{N}}$ and $\ASEP{q^{-1},\vec{N}}$. This is similar to the limits of dynamic ASEP from Section \ref{sec:dynamic ASEP} to standard ASEP.\\
	\end{itemize}
\end{remark}
\begin{remark}[Dynamic ASEP on $\Z$]
	Let $\vec{N}=(N_k)_{k\in\Z}$ with each $N_k\in\N$. Then we can also define $\RASEP{q,\vec{N},\rho}$ on $\Z$ instead of $M$ sites as long as there are a finite number of particles. The only non-trivial thing is how to define the height function $h^+_k$ when $k> M$. When having $M$ sites, we fixed the height function at the `virtual' site $M+1$ to be $\rho$. If we allow particles to jump further to the right, the height function at site $M+1$ is not fixed anymore. However, we can still use this site as a reference point. Every time a particle jumps from site $M$ to $M+1$, the height function at site $M+1$ is raised by $2$. Therefore, define
	\[
	h^+_{M+1}=\rho+2\sum_{j\geq M+1} \xi_j.
	\]
	Requiring $h^+_{k}=h^+_{k+1}+2\xi_k - N_k$ for all $k \in \Z$, we obtain the height function 
	\[
	h^+_k=\begin{cases}
		h^+_{M+1} + \sum_{j=k}^M (2\xi_j -N_j)\qquad &\text{if }k\leq {M+1}, \\[5pt]
		h^+_{M+1}- \sum_{j=M+1}^{k-1}(2\xi_j-N_j)\qquad &\text{if } k> M+1.
	\end{cases}
	\]
	In this setting, $M+1$ is just an arbitrary reference site, so we may for example fix $M+1=0$.
	
	The duality functions obtained later on are still valid for $\Z$ since sites with no particles for both dual processes do not contribute to the duality function. 
\end{remark}

	Let us take a closer look at the rates of $\RASEP{q,\vec{N},\rho}$. Since the rates are invariant under the transformation $q\to q^{-1}$, we can assume without loss of generality that $q\in(0,1)$. One can show that the rates $C^{\mathsf R,+}_k$ and $C^{\mathsf R,-}_k$, seen as functions of $h_{k}^+$, are monotonically decreasing and increasing respectively. Indeed, from \eqref{eq:rewrittenrates} one obtains,
	\[
		C^{\mathsf R,-}_k =  [\xi_k]_q[N_{k-1}-\xi_{k-1}]_qf_1(h_k^+)f_2(h_k^+),
	\]
	where
	\begin{align*}
		f_1(h_k^+)=\frac{\big(q^{h_k^++\xi_{k-1}}+q^{-(h_k^++\xi_{k-1})}\big)}{\big(q^{h_{k}^+}+q^{-h_{k}^+}\big)},\\
		f_2(h_k^+) = \frac{\big(q^{h_{k}^++N_k-\xi_{k}}+q^{-(h_{k}^++N_k-\xi_{k})}\big)}{\big(q^{h_{k}^+-1}+q^{-(h_{k}^+-1)}\big)}.
	\end{align*}
	Here we used $h_{k+1}^+(\xi)=h_k^+(\xi)+N_k-2\xi_k$. A straightforward calculation for the derivative with respect to $h_k^+$ shows that $f_1',f_2'\geq 0$. Since also $f_1,f_2\geq 0$, we conclude that $C^{\mathsf R,-}_k$ is monotonically increasing in $h_k^+$. Similarly, one can show that $C^{\mathsf R,+}_k$ is monotonically decreasing in $h_k^+$. As noted in Remark \ref{remark:special cases of ASEP_R}, $\RASEP{q,\vec{N},\rho}$ becomes $\ASEP{q,\vec{N}}$ in the limit $\rho\to-\infty$ and $\ASEP{q^{-1},\vec{N}}$ if we let $\rho\to\infty$. Therefore, the height function $h_k^+$ makes the rates of $\RASEP{q,\vec{N},\rho}$ interpolate between $\ASEP{q,\vec{N}}$, when $h_k^+$ goes to $-\infty$, and $\ASEP{q^{-1},\vec{N}}$, when $h_k^+$ goes to $\infty$. \\
	\\
	To understand the behaviour of the system better, let us consider the case where there is only one particle present in $\RASEP{q,\vec{N},\rho}$ and $N_k=N\in\N$ for all $k$. Then we have a nearest neighbor asymmetric random walker which jumps from site $k\to k+1$ with rate $C_k^{\mathsf{R},+}(e_k)$ and from site $k\to k-1$ with rate $C_k^{\mathsf{R},-}(e_k)$, where $e_k$ is the one-particle configuration with a particle at site $k$. Recall that both $h_1=\rho + 2|\xi|-|\vec{N}|$ and $h_{M+1}^+=\rho$ are fixed when $\rho,\vec{N}$ and the total number of particles in the system are chosen. In the setting with one particle, $h_{1}^+=\rho+2 -|\vec{N}|$. So besides the situation with $M=N=1$, we always have $h_1^+ \leq h_{M+1}^+$. The function $k\mapsto h_k^+$ is a straight line with increment $N$ between two neighboring sites, unless the particle is at site $k$, when the increment is $N-2$. Since we just saw that the rate $C^{\mathsf R,+}_k$ is monotonically decreasing in the variable $h_k^+$ and $C^{\mathsf R,-}_k$ monotonically increasing, the more the particle is on the right of the lattice $\{1,2,...,M\}$, the less it wants to jump to the right and the more it wants to jump to the left. Similarly, when our particle moves further to the left, the less it wants to jump to the left and the more it wants to jump to the right. \\
	\\
	Since $C_k^{\mathsf{R},-}$ is monotonically increasing and $C_k^{\mathsf{R},+}$ monotonically decreasing, it is a natural question to ask for which value of $h_k^+$ the rates are equal. Using \eqref{eq:rewrittenrates}, we see that this will be the case if and only if
	\begin{align}
		q^{h_k^+-1} + q^{-(h_k^+-1)} = q^{h_k^+ -1+ N}+q^{-(h_k^+ -1+ N)}. \label{eq:leftrate=rightrate}
	\end{align}
	Note that the function 
	\[
		x\mapsto \half(q^x + q^{-x})
	\]
	is just $\cosh(\ln(q)x)$, thus \eqref{eq:leftrate=rightrate} is satisfied if and only if
	\[
		h_k^+-1 = h_k^+ -1+ N\qquad \text{ or } \qquad	h_k^+-1 = -( h_k^+ -1+ N).
	\] 
	The first equation has no solution, the second leads to $h_k=1-\half N$, which can only be satisfied for certain values of $\rho$. However, if $1-\half N \geq \rho \leq (M-\half)N-2|\xi|+1$, then
	\[
		h_1^+=\rho + 2|\xi| -MN \leq 1-\half N
	\] 
	and $h_{M+1}^+ \geq 1-\half N$. Since the height function has a maximum increment\footnote{In the setting with $1$ particle, the increment is either $N$ or $N-2$.} of $\pm N$ between neighboring sites, there will be an integer $j$ such that $h_j^+(e_j) \leq 1-\half N$ and $h_{j+1}^+(e_{j+1}) \geq 1-\half N$. In that setting, the particle has a preference to jump to the right if present at site $j$, and to the left if present at site $j+1$. Moreover, note that $h_j^+$ is relatively close to $0$ compared to the value $h_{M+1}^+-h_1^+ = MN -2|\xi|$. Therefore, the sign of the preferred direction the particle wants to jump will flip in the region where the height function is close to zero. That is, the particle is attracted to this region. However, the existence of such a region depends on the value of $\rho$, so let us describe three different scenarios.
	\begin{itemize}
		\item $\rho \ll 0$.\\
		When $\rho$ is small enough such that $h_k^+ \leq 1-\half N \leq 0$ for all $k$, the particle has a preference to jump to the right on each site. Moreover, if it gets further away from site $M$, the rate for jumping to the right increases, and the rate for jumping to the left decreases. Note that this is consistent with the fact that the jump rates get closer to the ones of $\ASEP{q,\vec{N}}$ when the height function decreases.\\
		
		\item $1-\half N < \rho \leq (M-\half)N - 2|\xi| +1$. \\
		As discussed before, there will be an integer $j$ such that the particle has a preference to jump to the right if present at (or left of) site $j$ and a preference to move to the left if present at (or right of) site $j+1$. Also, if the particle gets further to the left, the rate for jumping right increases, and if the particle gets further to the right, the rate for jumping left increases. Moreover, if the particle moves further left or right of site $j$, the jump rates will approximate the rates of $\ASEP{q,\vec{N}}$ or $\ASEP{q^{-1},\vec{N}}$ respectively.\\
		
		\item $\rho \gg 0$.\\
		When $\rho$ is large enough such that $h_1^+ >1+\half N$,  then $h_k^+ \geq 1-\half N$ for all $k$. Therefore, the particle will have a preference to jump to the left on each site. Moreover, if it gets further right of site $1$, the rate for jumping left increases, and the rate for jumping right decreases. This is again consistent with the fact that the jump rates get closer to the jump rates of $\ASEP{q^{-1},\vec{N}}$ when the height function increases.
	\end{itemize}
	In Figure \ref{fig:3situationsrho}, one can see instances of these three situations where there is a particle at site $2$ and site $M-1$. Note that as long as the particles don't jump to neighboring sites, the jump of one particle does not influence the jump rate of the other particle. So if we assume that $M\geq 5$, the particles are not neighbors and the previous one-particle analysis still goes through for the states depicted. 
		\begin{figure}[h]
			\begin{tikzpicture}[scale=0.85]
				\draw (0,0) [dotted] -- (4.5,0);
				\foreach \i in {1,...,10}
				{
					\draw (.5*\i-.5,-.1) [thick]-- (.5*\i-.5,.1);
				}
				\draw (.5,.3) [fill=black] circle(.1);
				\draw (.3,.5) [->] -- (.7,.5);
				\draw (4,.3) [fill=black] circle(.1);
				\draw (3.8,.5) [->] -- (4.2,.5);
				\draw (4.5,-.75) [blue,thick,dashed] -- (4,-0.75) -- (1,-2.25) -- (0.5,-2.25) -- (0,-2.5);
				
				\begin{scope}[shift={(6,0)}]
					\draw (0,0) [dotted] -- (4.5,0);
					\foreach \i in {1,...,10}
					{
						\draw (.5*\i-.5,-.1) [thick]-- (.5*\i-.5,.1);
					}
					\draw (.5,.3) [fill=black] circle(.1);
					\draw (.3,.5) [->] -- (.7,.5);
					\draw (4,.3) [fill=black] circle(.1);
					\draw (3.8,.5) [<-] -- (4.2,.5);
					\draw (4.5,1) [blue,thick,dashed] -- (4,1) -- (1,-0.5) -- (.5,-0.5) -- (0,-0.75);
				\end{scope}	
				
				\begin{scope}[shift={(12,0)}]
					\draw (0,0) [dotted] -- (4.5,0);
					\foreach \i in {1,...,10}
					{
						\draw (.5*\i-.5,-.1) [thick]-- (.5*\i-.5,.1);
					}
					\draw (.5,.3) [fill=black] circle(.1);
					\draw (.3,.5) [<-] -- (.7,.5);
					\draw (4,.3) [fill=black] circle(.1);
					\draw (3.8,.5) [<-] -- (4.2,.5);
					\draw (4.5,2.75) [blue,thick,dashed] -- (4,2.75) -- (1,1.25) -- (.5,1.25) -- (0,1);
				\end{scope}
			\end{tikzpicture}
			
			\caption{Three different $\ASEP{q,\vec{N},\rho}$ instances with 2 particles (black dots) and the corresponding height function $h^+$ (dashed line), where $N_k=2$ for all $k$. From left to right: $\rho\ll 0$, $1-\half N \leq \rho\leq (M-\half)N - 2|\xi| +1$, $\rho\gg 0$. The arrow indicates the preferred jump direction.} \label{fig:3situationsrho}
		\end{figure}

\subsection{Duality between ASEP and dynamic ASEP}
A main result of this paper is (an orthogonal) Markov duality between $\ASEP{q,\vec{N}}$ and $\RASEP{q,\vec{N},\rho}$. The duality function is given in terms of (dual) $q$-Krawtchouk polynomials \cite[\S14.15 and \S14.17]{KLS} that we now introduce. Define for $c\in \R$ and $N \in \N$,
\[
K_n(x;c,N;q) = \rphis{3}{2}{q^{-n}, q^{-x}, -cq^{x-N} }{q^{-N}, 0 }{q,q}.
\]
For $n=0,\ldots,N$ these are polynomials in $q^{-x}-cq^{x-N}$ of degree $n$; as such these polynomials are called the dual $q$-Krawtchouk polynomials. Moreover, for $x=0,\ldots,N$ these are polynomials in $q^{-n}$ of degree $x$; as such they are known as $q$-Krawtchouk polynomials. Throughout the paper we will just refer to these functions as $q$-Krawtchouk polynomials. We define `1-site duality functions' by
\begin{align}
	k(n,x;\rho;N;q) = c_{\mathrm k}(n,\rho,N)  K_n(x;q^{2\rho},N;q^2),\label{eq:1siteduality}
\end{align}
where the coefficient $c_\mathrm{k}$ can be found in appendix \ref{app:overview}. We define the duality function $K_{\mathsf R}: X \times X \to \R$ as a product of the 1-site duality functions,  
\begin{equation}\label{eq:duality function K}
	K_{\mathsf R}(\eta,\xi)= K_{\mathsf R}(\eta,\xi;\rho,\vec{N};q) = q^{-\frac12 u(\eta;\vec{N})} \prod_{k=1}^M k(\eta_k, \xi_k;h^{+}_{k+1}(\xi);N_k;q), 
\end{equation}
where 
\begin{equation} \label{eq: u(eta,N}
	\begin{split}
		u(\eta;\vec{N}) &= \sum_{k=1}^M  \Big(\eta_kN_k-2\eta_k\sum_{j=1}^{k}  N_j \Big). 
	\end{split}
\end{equation}
Note that the product \eqref{eq:duality function K} has a `nested' structure as the $k$-th factor, which is the 1-site duality function corresponding to site $k$, depends on $\xi_{k+1},\ldots,\xi_M$ through $h^{+}_{k+1}(\xi)$, i.e.~on the dual particles on the right of site $k$. The following result shows that $K_{\mathsf R}$ is a duality function between $\ASEP{q,\vec{N}}$ and $\RASEP{q,\vec{N},\rho}$. 
\begin{theorem}\label{Thm:dualityASEPRASEP}
	For states $\eta,\xi \in X$,
	\[
	[ L_{q,\vec{N}} K_{\mathsf R}(\,\cdot\, , \xi)](\eta) = [L_{q,\vec{N},\rho}^{\mathsf R} K_{\mathsf R}(\eta,\,\cdot\,)](\xi).
	\]
\end{theorem}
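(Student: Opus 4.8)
The plan is to follow the representation-theoretic route sketched in the introduction rather than to attack the identity head-on. I would realize the generator $L_{q,\vec{N}}$ of $\ASEP{q,\vec{N}}$ as a sum over bonds of coproducts of the quantum Casimir $\Omega$ of $\U_q(\mathfrak{sl}_2)$, acting on the \emph{degree} variables $\eta_k$, with site $k$ carrying a spin-$\frac{N_k}{2}$ representation. The next step is to show that the one-site $q$-Krawtchouk kernels $k(\eta_k,\xi_k;h^+_{k+1}(\xi);N_k;q)$ implement a change of representation that transports this Casimir action from the degree $\eta_k$ onto the \emph{variable} $\xi_k$, where the very same coproduct of $\Omega$ is realized by $L^{\mathsf R}_{q,\vec{N},\rho}$. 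Because $\Omega$ is central, establishing this change of representation at a single site suffices: the assertion of Theorem \ref{Thm:dualityASEPRASEP} is then just the assembled statement that the Casimir coproduct, transported from the $\eta$-degrees to the $\xi$-variables through the kernel, produces precisely the dynamic generator $L^{\mathsf R}_{q,\vec{N},\rho}$.

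Concretely, I would first reduce the global identity to a single bond. Both generators are sums of nearest-neighbor terms and $K_{\mathsf R}$ is a product, so the key structural observation is that a hop across the bond $(k,k+1)$ in the $\xi$-variable changes \emph{only} $h^+_{k+1}$ among all the height functions: since $\xi_k+\xi_{k+1}$ is preserved, every partial sum $\sum_{j\ge m}(2\xi_j-N_j)$ with $m\neq k+1$ is unchanged, while $h^+_{k+1}$ shifts by $\pm 2$. Hence, on the $\xi$-side a bond-$(k,k+1)$ hop affects only the $k$-th factor (through $\xi_k$ and $h^+_{k+1}$) and the $(k+1)$-th factor (through $\xi_{k+1}$), whereas on the $\eta$-side it affects only the factors $k$ and $k+1$ (through $\eta_k,\eta_{k+1}$, the height functions being independent of $\eta$); all remaining factors cancel. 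I would carry the prefactor $q^{-\frac12 u(\eta;\vec{N})}$ and the normalizations $c_{\mathrm k}$ through this reduction, noting that $u$ depends only on $\eta$, so it contributes a computable multiplicative factor on the left-hand side while factoring out trivially on the right.

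The heart of the argument is then the two-site, bond-local identity. This is exactly the statement that the action of the $\ASEP{q,\vec{N}}$ bond term on the degrees $(\eta_k,\eta_{k+1})$ of the product $k(\eta_k,\cdot)\,k(\eta_{k+1},\cdot)$ transfers, through the $q$-Krawtchouk polynomials, to the action of the rates $C^{\mathsf R,\pm}_k$ on the variables $(\xi_k,\xi_{k+1})$. At the level of a single family this rests on the duality of the $q$-Krawtchouk polynomials between their three-term recurrence in the degree $n$ and their second-order $q$-difference equation in the variable $x$: the rates $c^\pm_k$ assemble into a tridiagonal operator in the $\eta$-degrees whose transpose under the kernel is precisely the $q$-difference operator encoded by $C^{\mathsf R,\pm}_k$. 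Algebraically, this is the eigenfunction property of the one-site kernel for the relevant twisted primitive element, combined with the change of representation of the subalgebra it generates.

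The main obstacle I anticipate is verifying that the one-site change of representation transports the \emph{full} two-site coproduct $\Delta(\Omega)$, rather than a single Casimir or a single generator. The nesting means the $k$-th factor depends on $h^+_{k+1}(\xi)$, so the height shift produced by the coproduct on site $k+1$ must match exactly the $+2$ shift of $h^+_{k+1}$ caused by the hop, and the bookkeeping of powers of $q$ coming from $u(\eta;\vec{N})$, from $c_{\mathrm k}$, and from the rate prefactors must all conspire to make the two tridiagonal two-site operators genuine transposes of one another under $K_{\mathsf R}$. I would therefore first establish the single-site intertwining and eigenvalue statement cleanly, and only then assemble the bonds via the comultiplication, so that the delicate $q$-power and nesting bookkeeping stays localized to one controlled computation.
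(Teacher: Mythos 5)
Your global strategy is the paper's strategy: realize $L_{q,\vec{N}}$ as a sum over bonds of $\pitensor(\De(\Om))$ in the tensor-product representation, reduce to a single bond, and use the $q$-Krawtchouk kernels to transport the action from the $\eta$-degrees to the $\xi$-variables. Your structural observation that a $\xi$-hop across the bond $(k,k+1)$ changes only $h^{+}_{k+1}$ among all heights is also correct and is implicitly used in the paper. The genuine gap is in the mechanism you give for the bond-local identity. You invoke ``the duality of the $q$-Krawtchouk polynomials between their three-term recurrence in the degree and their second-order $q$-difference equation in the variable,'' so that the two bond operators become transposes of a tridiagonal pair under the kernel. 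As stated this fails precisely because of the nesting you flag: the parameter of the $k$-th factor is $h^{+}_{k+1}(\xi)$, so any term in which $\xi_{k+1}$ shifts simultaneously shifts the \emph{parameter} of the site-$k$ kernel, and the standard fixed-parameter difference equation cannot be applied factor by factor. What is actually needed — and what the paper supplies in Lemma \ref{lem:qdifkrawtchouk} — are two nonstandard contiguous relations \eqref{eq:dualqkrawtchoukxrho+}--\eqref{eq:dualqkrawtchoukxrho-} in which $q^{-2n}k(n,x;\rho)$ is expanded in terms of $k(n,x+j;\rho\mp 2)$, i.e.\ the variable and the dynamic parameter move together; these are exactly what lets the multiplication operator $q^{-2\eta_k}$ (the image of $K^{-2}$) be pushed through the nested product from right to left. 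Your proposal contains no substitute for these relations.

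The second missing ingredient is how the full $\De(\Om)$ is transported. You suggest that the eigenfunction property for the twisted primitive element plus centrality of $\Om$ suffices, but centrality plays no such role in the proof. The paper needs the explicit identity \eqref{eq:CasimirinYrhoK-2} writing $\Om$ as a cubic expression $f(Y_\rho-[\rho]_q,K^{-2})$ plus lower-order terms (a degenerate Askey--Wilson algebra relation), valid for \emph{every} $\rho$ so that one may choose $\rho=h^{+}_{k+2}(\xi)$ bond by bond; the coproduct formula $\De(Y_\rho)=K^2\otimes Y_\rho+Y_\rho\otimes 1$ then makes $K_{\mathsf R}$ an eigenfunction of $\pitensor(\De(Y_{h^{+}_{k+2}(\xi)}))$ (Lemma \ref{lem:etatoxi}). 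Crucially, the image of $\De(K^{-2})$ is a \emph{nine}-term operator in $\xi$, including particle-number-changing terms, so the two bond operators are not transposes of tridiagonal matrices in any naive sense; the nine terms collapse to the particle-conserving three-term generator only because the coefficients $\beta_k(m)$ in the proof of Theorem \ref{Thm:dynamicASEP} vanish at $m=\pm 1$, via the identity $(q^2+q^{-2})[p]_q[p+2]_q-[p]_q^2-[p+2]_q^2+(q+q^{-1})^2=0$. You correctly anticipated this obstacle, but anticipating it is not resolving it: without the $\rho$-shifting recurrences and the Casimir decomposition with its cancellation, the $\xi$-side operator your argument produces is not $L^{\mathsf R}_{q,\vec{N},\rho}$, and the additive constant $\big[\tfrac12(N_k+N_{k+1}+1)\big]_q^2$ (fixed in the paper by evaluating at $\eta=0$) is likewise left undetermined.
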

The full proof, for which we rely on quantum algebra techniques, can be found in Section \ref{subsec:dualityDynASEPandASEP}. Below we give a short sketch of the proof.
\begin{proof}[Sketch of proof]
	Using that both generators can be written as a sum of operators acting on two sites, proving the duality boils down to showing it for the part of the generator for sites $k,k+1$. That is, we write out both sides of the duality equation for the interaction between these two sites and obtain
	\[
		\begin{split} c^{+}_k(\eta) [K_\mathsf{R}(\eta^{k,k+1},\xi)-K_\mathsf{R}(\eta,\xi)] + c^-_{k+1}(\eta) [K_\mathsf{R}(\eta^{k+1,k},\xi)-K_\mathsf{R}(\eta,\xi)] &=\\
			 C^{\mathsf R,+}_k(\xi) [K_\mathsf{R}(\eta,\xi^{k,k+1})-K_\mathsf{R}(\eta,\xi)] + C^{\mathsf R,-}_{k+1}(\xi) [&K_\mathsf{R}(\eta,\xi^{k+1,k})-K_\mathsf{R}(\eta,\xi)].\end{split}
	\]
	This (quite complex) identity for the multivariate $q$-Krawtchouk polynomials $K_\mathsf{R}$, where an action on the $\eta$-variable is transferred to the $\xi$ variable,  can then be proven using representations of the quantum algebra $\U_q(\mathfrak{sl}_2)$.
\end{proof}
\begin{remark}
	When taking $N_1=...=N_M=1$, this is the duality proven in \cite{BoCo}, although with a different duality function. We will address this further in Remark \ref{rem:HahnDuality}.
\end{remark}
Since $\RASEP{q,\vec{N},\rho}$ is invariant under $q \mapsto q^{-1}$, we immediately also obtain a duality with $\ASEP{q^{-1},\vec{N}}$.
\begin{corollary}\label{Cor:dualityASEPRASEP}
	For states $\eta,\xi \in X$,
	\[
	[ L_{q^{-1},\vec{N}} K_{\mathsf R}(\,\cdot\, , \xi;q^{-1})](\eta) = [L_{q,\vec{N},\rho}^{\mathsf R} K_{\mathsf R}(\eta,\,\cdot\,;q^{-1})](\xi).
	\]
\end{corollary}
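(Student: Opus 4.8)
The plan is to obtain this corollary directly from Theorem \ref{Thm:dualityASEPRASEP} by exploiting the $q \leftrightarrow q^{-1}$ invariance of the generator of $\RASEP{q,\vec{N},\rho}$. The essential point is that Theorem \ref{Thm:dualityASEPRASEP} holds for \emph{every} $q>0$; making the parameter dependence of the duality function explicit, it asserts
\[
[ L_{q,\vec{N}} K_{\mathsf R}(\,\cdot\, , \xi;q)](\eta) = [L_{q,\vec{N},\rho}^{\mathsf R} K_{\mathsf R}(\eta,\,\cdot\,;q)](\xi)
\]
for all $\eta,\xi \in X$ and all $q>0$.

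First I would substitute $q \mapsto q^{-1}$ throughout this identity. Since $q^{-1}>0$ whenever $q>0$, the substitution is legitimate and produces
\[
[ L_{q^{-1},\vec{N}} K_{\mathsf R}(\,\cdot\, , \xi;q^{-1})](\eta) = [L_{q^{-1},\vec{N},\rho}^{\mathsf R} K_{\mathsf R}(\eta,\,\cdot\,;q^{-1})](\xi).
\]
Next I would invoke the $q \leftrightarrow q^{-1}$ invariance of the rates $C^{\mathsf R,+}_k$ and $C^{\mathsf R,-}_k$ recorded in the remark following \eqref{eq:rewrittenrates}. That invariance says precisely that $L_{q^{-1},\vec{N},\rho}^{\mathsf R} = L_{q,\vec{N},\rho}^{\mathsf R}$ as operators, so the generator on the right-hand side may be replaced by $L_{q,\vec{N},\rho}^{\mathsf R}$, which is exactly the claimed identity. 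It is worth emphasizing that the content of the corollary is genuine rather than a restatement: the rates $c^\pm_k$ of $\ASEP{q,\vec{N}}$ are \emph{not} invariant under $q\mapsto q^{-1}$, so the left-hand generator truly changes from $L_{q,\vec{N}}$ to $L_{q^{-1},\vec{N}}$, giving a new duality with $\ASEP{q^{-1},\vec{N}}$.

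There is no real obstacle here; the only matter requiring care is bookkeeping of the parameter dependence, namely that the duality function $K_{\mathsf R}(\,\cdot\,,\,\cdot\,;q^{-1})$ appearing in the corollary is exactly what the $q\mapsto q^{-1}$ substitution yields, while the dynamic parameter $\rho$ and the capacity vector $\vec{N}$ are held fixed throughout. Once the parameter dependencies are tracked consistently, the result follows in two lines.
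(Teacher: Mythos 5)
Your proof is correct and takes exactly the route the paper itself uses: the corollary is obtained from Theorem \ref{Thm:dualityASEPRASEP} by substituting $q \mapsto q^{-1}$ (legitimate since the theorem holds for all $q>0$) and then invoking the $q \leftrightarrow q^{-1}$ invariance of the rates $C^{\mathsf R,\pm}_k$, i.e.\ $L_{q^{-1},\vec{N},\rho}^{\mathsf R} = L_{q,\vec{N},\rho}^{\mathsf R}$. The paper compresses this into the single sentence preceding the corollary, and your write-up merely makes the same parameter bookkeeping explicit.
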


\subsection{Current of $\RASEP{q,\vec{N},\rho}$}
	Similar to Section 3.4 of \cite{CGRS}, we can show that the expectation of the particle current of generalized dynamic ASEP can be expressed in terms of expectations of an ASEP system with only one dual particle. We define the hyperbolic current of $\RASEP{q,\vec{N},\rho}$, started from $\xi(0)$ and now at time $t$, as
	\[
		J_{k}^{\text{hyp}}(t)= \frac{[h_k^+(\xi(t))]_q}{[h_k^+(\xi(0))]_q},
	\]
	where $\xi(t)$ is the state at time $t$. Since the map $[\cdot]_q:\R\to\R$ is bijective, one can obtain $ h_k^+(\xi(t))$ uniquely from the value of $J_{k}^{\text{hyp}}(t)$. Then we can calculate
	\[
		 h_k^+(\xi(t)) - h_k^+(\xi(0)) = \sum_{j\geq k} 2\xi_j(t) - \sum_{j\geq k} 2\xi_j(0) = 2J_k(t),
	\]
	where $J_k(t)$ is the usual current which counts the number of particles in the time interval $[0,t]$ that jumped from site $k-1$ to site $k$ minus the particles that jumped in the opposite direction. Assume $q\in(0,1)$, then $\RASEP{q,\vec{N},\rho}$ becomes $\ASEP{q,\vec{N}}$ in the limit $\rho\to-\infty$ and
	\[
		\lim\limits_{\rho\to-\infty} J_{k}^{\text{hyp}}(t) = q^{2J_k(t)}.
	\]
	Thus our definition of the hyperbolic current corresponds in the limit $\rho\to-\infty$ to the $q$-exponential current from \cite{CGRS}. Using the duality function $K_\mathsf{R}$, we can prove the following theorem which links the first moment of the hyperbolic current of $\RASEP{q,\vec{N},\rho}$ to expectations of $\ASEP{q,\vec{N}}$ with one dual particle, which is just an asymmetric nearest neighbor random walker on the lattice $\{1,2,...,M\}$.
	\begin{theorem}
		Let $\xi=\xi(0)$ be a configuration of $\RASEP{q,\vec{N},\rho}$. Then the first moment of the hyperbolic current satisfies
	\begin{align*}
		&\mathbb{E}_\xi\bigg[ J_{k}^{\text{hyp}}(t) \bigg] = q^{\sum_{j=1}^{k-1}N_j}\frac{\big[\rho + 2|\xi| -|\vec{N}|\big]_q}{[h_k^+(\xi)]_q}  \\
		&\hspace{2.1cm}+ \sum_{i=1}^{k-1} q^{\sum_{j=i+1}^{k-1}N_j-\sum_{j=1}^{i-1}N_j}\frac{[N_i]_q}{[h_k^+(\xi)]_q} \mathbf{E}_i \bigg[\frac{q^{\sum_{j=1}^{n(t)} N_j}}{[N_{n(t)}]_q}(q^{-N_{n(t)}}[h_{n(t)+1}^+(\xi)]_q - [h_{n(t)}^+(\xi)]_q)\bigg],
	\end{align*}
		where $\mathbb{E}_\xi$ is the expectation of $\RASEP{q,\vec{N},\rho}$ which started at $\xi$ at $t=0$ and $\mathbf{E}_j$ is the expectation of one random walker started at site $j$ at $t=0$, which jumps from site $i$ to $i-1$ with rate $q^{N_i}[N_{i-1}]_q$ and from site $i$ to site $i+1$ with rate $q^{-N_i}[N_{i+1}]_q$.
	\end{theorem}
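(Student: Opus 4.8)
The plan is to follow the current computation of Section~3.4 of \cite{CGRS}: decompose the observable $[h_k^+(\xi)]_q$ into one-particle pieces and then transfer the time evolution onto the dual $\ASEP{q,\vec{N}}$ process via Theorem~\ref{Thm:dualityASEPRASEP}. First, since the initial height $h_k^+(\xi(0))$ is deterministic, I would write
\[
\mathbb{E}_\xi\big[J_k^{\text{hyp}}(t)\big] = \frac{1}{[h_k^+(\xi)]_q}\,\mathbb{E}_\xi\big[[h_k^+(\xi(t))]_q\big],
\]
so that the whole statement reduces to computing the first moment of $[h_k^+(\xi(t))]_q$.

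Second, I would introduce the one-particle observable
\[
\phi_i(\xi) = \frac{q^{\sum_{j=1}^i N_j}}{[N_i]_q}\Big(q^{-N_i}[h_{i+1}^+(\xi)]_q - [h_i^+(\xi)]_q\Big),
\]
which is exactly the integrand in the statement, and prove the purely algebraic telescoping identity
\[
[h_k^+(\xi)]_q = q^{\sum_{j=1}^{k-1}N_j}[h_1^+(\xi)]_q + \sum_{i=1}^{k-1} q^{\sum_{l=i+1}^{k-1}N_l-\sum_{j=1}^{i-1}N_j}[N_i]_q\,\phi_i(\xi),
\]
valid for every configuration $\xi$. This follows by reading the definition of $\phi_i$ as the linear recursion
\[
[h_{i+1}^+]_q = q^{N_i}[h_i^+]_q + q^{-\sum_{j=1}^{i-1}N_j}[N_i]_q\,\phi_i(\xi)
\]
and unrolling it from $i=1$ to $k-1$; it requires only the addition rule $[a+b]_q = q^a[b]_q + [a]_q q^{-b}$ for $q$-numbers. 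Evaluating the identity at $\xi(t)$ and applying $\mathbb{E}_\xi$ reduces the theorem to the two quantities $\mathbb{E}_\xi[[h_1^+(\xi(t))]_q]$ and $\mathbb{E}_\xi[\phi_i(\xi(t))]$. Since $\RASEP{q,\vec{N},\rho}$ conserves the total number of particles, $h_1^+(\xi(t)) = \rho + 2|\xi| - |\vec{N}|$ is constant in $t$, which produces the boundary term $q^{\sum_{j=1}^{k-1}N_j}[\rho+2|\xi|-|\vec{N}|]_q$.

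Third — and this is the crux — I would identify $\phi_i$ with the one-dual-particle duality function: namely, show that $K_{\mathsf R}(e_i,\xi)$ agrees with $\phi_i(\xi)$ up to an $i$-independent affine normalization, where $e_i$ has a single dual particle at site $i$. The factor at site $i$ in \eqref{eq:duality function K} is the degree-one $q$-Krawtchouk polynomial $K_1$, which by \eqref{eq:1siteduality} is linear in $q^{\pm 2h_i^+}$, hence a combination of $[h_i^+]_q$ and $[h_{i+1}^+]_q$; the remaining sites contribute the constants $c_{\mathrm k}(0,\cdot,N_k)$ and the prefactor $q^{-\frac12 u(e_i;\vec{N})}$. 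I expect the main obstacle to be the bookkeeping that collapses this nested product exactly to $\phi_i$: in particular, one must check that the $\eta_k=0$ factors for the sites $k<i$ introduce no spurious dependence on $\xi_1,\dots,\xi_{i-1}$, and that the surviving normalization is independent of $i$ (so that, invoking the invariance of affine changes under both generators as in Remark~\ref{rem:InvTotPart}, the identification is robust enough to be used under the expectation).

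Finally I would invoke duality. On the finite state space $X$ the generator identity of Theorem~\ref{Thm:dualityASEPRASEP} exponentiates to $\mathbb{E}_\xi[K_{\mathsf R}(\eta,\xi(t))] = \mathbf{E}_\eta[K_{\mathsf R}(\eta(t),\xi)]$, where $\eta(t)$ evolves under $\ASEP{q,\vec{N}}$. Taking $\eta = e_i$ and using that $\ASEP{q,\vec{N}}$ conserves particles, the dual configuration remains a single particle $e_{n(t)}$ whose motion is precisely the asymmetric random walk with rates $c^+_i(e_i)=q^{-N_i}[N_{i+1}]_q$ and $c^-_i(e_i)=q^{N_i}[N_{i-1}]_q$ read off from \eqref{eq:c^+_k}--\eqref{eq:c^-_k}. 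Hence $\mathbb{E}_\xi[\phi_i(\xi(t))] = \mathbf{E}_i[\phi_{n(t)}(\xi)]$, and substituting this together with the constant term $\mathbb{E}_\xi[[h_1^+(\xi(t))]_q]=[\rho+2|\xi|-|\vec{N}|]_q$ into the telescoping identity, then dividing by $[h_k^+(\xi)]_q$, yields the claimed formula.
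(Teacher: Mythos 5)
Your proposal is correct and follows essentially the same route as the paper's proof: you identify the observable $\phi_i(\xi)$ with the one-dual-particle duality function $K_{\mathsf R}(e_i,\xi)$ (the paper computes explicitly that $K_{\mathsf R}(e_i,\xi)=-q^{-1/2}\phi_i(\xi)$, so your anticipated $i$-independent normalization indeed holds), transfer the time evolution to the dual random walk via Theorem \ref{Thm:dualityASEPRASEP}, and use conservation of particles to fix $[h_1^+(\xi(t))]_q$. The only difference is presentational — you unroll the recursion $[h_{i+1}^+]_q=q^{N_i}[h_i^+]_q+q^{-\sum_{j=1}^{i-1}N_j}[N_i]_q\,\phi_i$ into a deterministic telescoping identity before taking expectations, whereas the paper iterates the corresponding relation between expectations — which is mathematically equivalent.
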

	\begin{proof} 
	Recall that $e_j$ is the one-particle state with a particle at site $j$. From the duality relation on generator level given by Theorem \ref{Thm:dualityASEPRASEP}, we obtain
	\begin{align}
	\mathbb{E}_\xi \big[K_\mathsf{R}(e_k,\xi(t))\big] = \mathbf{E}_k \big[ K_\mathsf{R}(e_{n(t)},\xi )\big],\label{eq:dualityexpectation}
	\end{align}
	where $n(t)$ is the position of an asymmetric random walker with rates from $\ASEP{q,\vec{N}}$, i.e. it jumps from site $j$ to $j-1$ with rate $q^{N_j}[N_{j-1}]_q$ and from site $j$ to $j+1$ with rate $q^{-N_j}[N_{j+1}]_q$. We will explicitly compute $K_\mathsf{R}$. Taking $\eta=e_k$ in the definition of $K_\mathsf{R}(\eta,\xi)$ from \eqref{eq:1siteduality} and \eqref{eq:duality function K} gives
	\begin{align*}
		K_\mathsf{R}(e_k,\xi)&= q^{-\half u(e_k;\vec{N})} k(1,\xi_k;h_{k+1}^+(\xi);N_k;q)\\
		&= -q^{- \half -h_{k+1}^+(\xi)+ \sum_{j=1}^k N_j}\rphis{3}{2}{q^{-2}, q^{-2\xi_k},-q^{2h_{k+1}^++2\xi_k -2 N_k}}{q^{-2N_k}, 0}{q^2;q^2} \\
		&=  -q^{- \half -h_{k+1}^+(\xi)+ \sum_{j=1}^k N_j}\bigg(1 + \frac{(1-q^{-2})(1-q^{-2\xi_k})(1+q^{2h_{k+1}^++2\xi_k -2 N_k})}{(1-q^{-2N_k})(1-q^2)}q^2 \bigg)\\
		&= -\frac{q^{- \half -h_{k+1}^+(\xi)+ \sum_{j=1}^k N_j}}{(1-q^{-2N_k})}\big(1-q^{-2N_k}-(1-q^{-2\xi_k})(1+q^{2h_{k+1}^++2\xi_k -2 N_k})  \big)\\
		&= -\frac{q^{- \half + \sum_{j=1}^{k-1} N_j}(q-q^{-1})}{(1-q^{-2N_k})}(q^{-N_k}[h_{k+1}^+(\xi)]_q - [h_k^+(\xi)]_q).
	\end{align*} 
	Therefore, \eqref{eq:dualityexpectation} gives
	\begin{align*}
		&\frac{q^{\sum_{j=1}^{k-1} N_j}}{(1-q^{-2N_k})}\mathbb{E}_\xi \bigg[q^{-N_k}[h_{k+1}^+(\xi(t))]_q - [h_k^+(\xi(t))]_q\bigg] \\
		&\hspace{2cm} = \mathbf{E}_k \bigg[\frac{q^{\sum_{j=1}^{n(t)-1} N_j}}{(1-q^{-2N_{n(t)}})}(q^{-N_{n(t)}}[h_{n(t)+1}^+(\xi)]_q - [h_{n(t)}^+(\xi)]_q)\bigg],
	\end{align*}
	which leads to the recursive relation
	\begin{align*}
		&\mathbb{E}_\xi \bigg[[h_{k+1}^+(\xi(t))]_q\bigg] = q^{N_k}\mathbb{E}_\xi \bigg[[h_k^+(\xi(t))]_q\bigg] \\
		&\hspace{3cm} + q^{-\sum_{j=1}^{k-1} N_j}[N_k]_q \mathbf{E}_k \bigg[\frac{q^{\sum_{j=1}^{n(t)} N_j}}{[N_{n(t)}]_q}(q^{-N_{n(t)}}[h_{n(t)+1}^+(\xi)]_q - [h_{n(t)}^+(\xi)]_q)\bigg].
	\end{align*}
	Applying this formula $k-1$ times and using $h_1^+(\xi(t))=\rho + 2|\xi| -|\vec{N}|$, we obtain
	\begin{align*}
		&\mathbb{E}_\xi\bigg[ [h_k^+(\xi(t))]_q \bigg] = q^{\sum_{j=1}^{k-1}N_j}\big[\rho + 2|\xi| -|\vec{N}|\big]_q  \\
		&\hspace{2.7cm}+ \sum_{i=1}^{k-1} q^{\sum_{j=i+1}^{k-1}N_j-\sum_{j=1}^{i-1}N_j}[N_i]_q \mathbf{E}_i \bigg[\frac{q^{\sum_{j=1}^{n(t)} N_j}}{[N_{n(t)}]_q}(q^{-N_{n(t)}}[h_{n(t)+1}^+(\xi)]_q - [h_{n(t)}^+(\xi)]_q)\bigg].
	\end{align*}
	Now divide both sides by $[h_k^+(\xi)]_q$. 
	\end{proof}

\subsection{Reversibility of dynamic ASEP}
The $q$-Krawtchouk polynomials are well-known orthogonal polynomials and the orthogonality relations imply orthogonality relations for the duality functions that we will state here. First, the orthogonality relations for the $q$-Krawtchouk polynomials read in terms of the 1-site duality functions $k(n,x)=k(n,x;\rho;N;q)$ (see \cite[\S14.15 and \S14.17]{KLS}), 
\begin{equation} \label{eq:orthogonality 1-site k}
	\begin{split}
		\sum_{x=0}^N  k(m,x) k(n,x) W(x) &= \frac{ \de_{m,n} }{w(n)},\\
		\sum_{n=0}^N  k(n,x) k(n,y) w(n) &= \frac{ \de_{x,y} }{W(x)},
	\end{split}
\end{equation}
with positive weight functions $w$ and $W$ given by 
\begin{align}
	w(n;N;q) &= q^{n(n-N)} \qbinom{q^2}{N}{n},\label{eq:orthokrawtchoukASEP}\\
	W(x;N,\rho;q)&= \frac{ 1+q^{4x+2\rho-2N}}{1+q^{2\rho-2N}} \frac{ (-q^{2\rho-2N};q^2)_x }{(-q^{2\rho+2};q^2)_x } \frac{q^{-x(2\rho+1+x-2N)}}{(-q^{-2\rho};q^2)_N} \qbinom{q^2}{N}{x}. \label{eq:orthokrawtchoukRASEP}
\end{align}
For $q\to 1$ both these weight functions become (a multiple of) a binomial coefficient. The case $q \to 1$ is studied in more detail in Section \ref{sec:symmetric degenerations}. With these weight functions we define the following weight functions on the state space $X$, 
\begin{align}
	w(\eta;\vec{N};q)&= q^{u(\eta;\vec{N})} \prod_{k=1}^M w(\eta_k;N_k;q), \label{eq:Weightfunction w} \\  
	W_{\mathsf R}(\xi;\vec{N},\rho;q)& = \prod_{k=1}^M W(\xi_k;N_k,h^{+}_{k+1}(\xi);q),\label{eq:Weightfunction WR}
\end{align}
where the factor $u$ is given by \eqref{eq: u(eta,N}. These weight functions provide us with reversible measures with respect to which the duality functions $K_{\mathsf R}$ are orthogonal.
\begin{theorem}\* \label{Thm:reversiblemeasures}
	\begin{enumerate}[label = (\roman*)]
		\item The weight function $w(\,\cdot\,;\vec{N};q)$ is a reversible measure for $\ASEP{q,\vec{N}}$.
		\item The weight function $W_{\mathsf R}(\,\cdot\,;\vec{N},\rho;q)$ is a reversible measure for $\RASEP{q,\vec{N},\rho}$.
		\item The duality functions $K_{\mathsf R}(\eta,\xi)$ defined by \eqref{eq:duality function K} satisfy the orthogonality relations
		\[
		\begin{gathered}
			\sum_{\eta \in X} K_{\mathsf R}(\eta,\xi) K_{\mathsf R}(\eta,\xi') w(\eta;\vec{N};q) = \frac{\de_{\xi,\xi'}}{W_{\mathsf R}(\xi;\vec{N},\rho;q)},\\
			\sum_{\xi \in X} K_{\mathsf R}(\eta,\xi) K_{\mathsf R}(\eta',\xi) W_{\mathsf R}(\xi;\vec{N},\rho;q) = \frac{\de_{\eta,\eta'}}{w(\eta;\vec{N};q)}.
		\end{gathered}
		\]
	\end{enumerate}
\end{theorem}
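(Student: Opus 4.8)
The plan is to derive all three parts from the one-site orthogonality relations \eqref{eq:orthogonality 1-site k} together with the nested product structure of the duality function and the weight functions, rather than attacking reversibility directly from detailed balance. The key structural observation is that the $k$-th factor of each object depends on the dual configuration only through $h^+_{k+1}(\xi)$, i.e.~through $\xi_{k+1},\ldots,\xi_M$, so the products \eqref{eq:duality function K}, \eqref{eq:Weightfunction w}, and \eqref{eq:Weightfunction WR} can be summed site-by-site from $k=M$ downward. I would begin by proving part (iii), since once the biorthogonality of $K_{\mathsf R}$ is established with respect to the two weight functions, parts (i) and (ii) will follow from a standard argument linking duality, orthogonality, and reversibility.

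For part (iii), consider first the sum $\sum_{\eta} K_{\mathsf R}(\eta,\xi) K_{\mathsf R}(\eta,\xi') w(\eta;\vec N;q)$. I would substitute \eqref{eq:duality function K} and \eqref{eq:Weightfunction w} and collect the factors $q^{-\frac12 u(\eta;\vec N)} q^{-\frac12 u(\eta;\vec N)} q^{u(\eta;\vec N)} = 1$, so the prefactors involving $u$ cancel exactly. What remains is
\[
\sum_{\eta\in X}\ \prod_{k=1}^M k(\eta_k,\xi_k;h^+_{k+1}(\xi);N_k;q)\, k(\eta_k,\xi'_k;h^+_{k+1}(\xi');N_k;q)\, w(\eta_k;N_k;q).
\]
Because the $\eta$-variables now separate completely across sites, this factorizes into $\prod_{k=1}^M \big(\sum_{\eta_k=0}^{N_k} k(\eta_k,\xi_k;\cdots) k(\eta_k,\xi'_k;\cdots) w(\eta_k;N_k;q)\big)$. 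The catch is that the two one-site factors carry \emph{different} second height arguments $h^+_{k+1}(\xi)$ and $h^+_{k+1}(\xi')$, so the first line of \eqref{eq:orthogonality 1-site k} does not apply termwise until those arguments agree. Here I would argue inductively from $k=M$ downward: at site $M$ the height argument is $h^+_{M+1}=\rho$ for both, so the first orthogonality relation gives a factor $\delta_{\xi_M,\xi'_M}/W(\xi_M;N_M,\rho;q)$; this forces $\xi_M=\xi'_M$, which in turn makes $h^+_M(\xi)=h^+_M(\xi')$, so the height arguments at site $M-1$ coincide and the induction proceeds. Telescoping, the full sum collapses to $\prod_{k}\delta_{\xi_k,\xi'_k}/W(\xi_k;N_k,h^+_{k+1}(\xi);q) = \delta_{\xi,\xi'}/W_{\mathsf R}(\xi;\vec N,\rho;q)$, which is the first claimed identity. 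The second identity follows by the symmetric computation, this time using the second line of \eqref{eq:orthogonality 1-site k} and summing over $\xi$; the nesting again lets the induction run from $k=M$ inward, since fixing $\xi_M,\ldots,\xi_{k+1}$ fixes the height arguments feeding into site $k$.

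For parts (i) and (ii), I would invoke the standard fact (implicit in Remark \ref{rem:InvTotPart} and the relation between duality and reversibility) that if $D(\eta,\xi)$ is a duality function between generators $L$ and $\widehat L$, and if $D$ is biorthogonal with respect to measures $\mu,\nu$ in the sense of part (iii), then $\mu$ and $\nu$ are reversible for $L$ and $\widehat L$ respectively. Concretely, Theorem \ref{Thm:dualityASEPRASEP} gives $[L_{q,\vec N} K_{\mathsf R}(\cdot,\xi)](\eta) = [L^{\mathsf R}_{q,\vec N,\rho} K_{\mathsf R}(\eta,\cdot)](\xi)$; writing this in matrix form against the biorthogonal basis and using the self-adjointness that the two orthogonality relations encode shows that $L_{q,\vec N}$ is symmetric with respect to $w(\cdot;\vec N;q)$ and $L^{\mathsf R}_{q,\vec N,\rho}$ is symmetric with respect to $W_{\mathsf R}(\cdot;\vec N,\rho;q)$. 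Symmetry of a conservative generator with respect to a positive measure is exactly detailed balance, giving reversibility. Alternatively, and perhaps more cleanly, parts (i) and (ii) can be proven directly by checking detailed balance one bond at a time: the factor $u$ in \eqref{eq: u(eta,N} and the explicit forms \eqref{eq:orthokrawtchoukASEP}, \eqref{eq:orthokrawtchoukRASEP} reduce the detailed-balance equation $\mu(\eta)L(\eta,\eta')=\mu(\eta')L(\eta',\eta)$ to a ratio identity that matches the jump rates \eqref{eq:c^+_k}--\eqref{eq:c^-_k} and the dynamic rates of Definition \ref{Def:dynamic ASEP}.

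The main obstacle is the site-by-site induction in part (iii): the one-site orthogonality relation only applies when the two nested height arguments coincide, and making this rigorous requires carefully tracking that the Kronecker delta produced at each site propagates to align the height argument at the next site before the induction step can be taken. Once that bookkeeping is in place the computation is routine, and parts (i) and (ii) are comparatively soft consequences.
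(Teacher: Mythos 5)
Your part (iii) is essentially the paper's proof: the first relation is the site-by-site application of the one-site orthogonality \eqref{eq:orthogonality 1-site k}, with the cancellation $q^{-\frac12 u}\cdot q^{-\frac12 u}\cdot q^{u}=1$ and the telescoping of Kronecker deltas from site $M$ inward exactly as the paper intends. Two corrections, though. First, you have the two lines of \eqref{eq:orthogonality 1-site k} swapped throughout: summing over the degrees $\eta_k$ against $w(\eta_k)$ uses the \emph{second} line (which produces $\de_{\xi_k,\xi'_k}/W(\xi_k)$), while summing over the variables $\xi_k$ against $W$ uses the \emph{first}. Second, and more substantively, your induction direction for the second relation is backwards: $\xi_k$ appears not only as the variable at site $k$ but also inside the parameters $h^{+}_{j+1}(\xi)$ of every site $j<k$, so you cannot sum over $\xi_M$ (or any outer site) first. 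The iterated sum must be evaluated from site $1$ outward: for fixed $\xi_2,\ldots,\xi_M$ the parameter $h^{+}_{2}(\xi)$ is common to all three site-$1$ factors, the sum over $\xi_1$ yields $\de_{\eta_1,\eta'_1}/w(\eta_1)$, and only then does $\xi_2$ become isolatable, and so on. Alternatively --- and this is the shortcut the paper actually takes --- the second relation is immediate from the first by finite-dimensional linear algebra: if a square matrix satisfies $A^tA=I$ then $AA^t=I$.

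The genuine gap is in your derivation of (i) and (ii). The ``standard fact'' you invoke --- that duality plus biorthogonality implies reversibility of both marginals --- is false. Part (iii) says precisely that $A(\eta,\xi)=\sqrt{w(\eta)}\,K_{\mathsf R}(\eta,\xi)\sqrt{W_{\mathsf R}(\xi)}$ is an orthogonal matrix, and Theorem \ref{Thm:dualityASEPRASEP} in matrix form reads $\genASEP K_{\mathsf R}=K_{\mathsf R}\,(\genRASEP)^t$; conjugating, the symmetrized kernels $S(\eta,\eta')=\sqrt{w(\eta)/w(\eta')}\,\genASEP(\eta,\eta')$ and $T(\xi,\xi')=\sqrt{W_{\mathsf R}(\xi')/W_{\mathsf R}(\xi)}\,\genRASEP(\xi',\xi)$ satisfy $S=ATA^t$. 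Hence $S$ is symmetric \emph{if and only if} $T$ is: an orthogonal duality transports detailed balance from one process to the other, but it cannot create it --- as stated, your argument would ``prove'' reversibility for any pair of dual processes admitting an orthogonal duality function, with no reference to the rates. You must establish reversibility of one of the two processes independently, and this is exactly the paper's structure: (i) is proved on its own, either by a direct detailed-balance check against the rates \eqref{eq:c^+_k}--\eqref{eq:c^-_k}, or (Section \ref{subsec:reprUqASEP}) because $\genASEP$ is a sum of operators $\pitensor(\De(\Om))$ minus constants in a $*$-representation with $\Om^*=\Om$; then (ii) follows by precisely your transport argument, implemented in Section \ref{subsec:RevDynASEP} via the unitary intertwiner $\Lambda_{\mathsf R}$ built from (iii). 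Your fallback --- checking detailed balance directly for both processes --- is the paper's other stated route and would close the gap, but for the dynamic rates of Definition \ref{Def:dynamic ASEP}, where a jump across the bond $(k,k+1)$ shifts both the occupation variables and the height argument $h^{+}_{k+1}$ inside the weight \eqref{eq:orthokrawtchoukRASEP}, that two-site verification is the actual work, and your proposal only gestures at it.
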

\begin{proof}
	(i) and (ii). One can prove this directly by checking the detailed balance condition. Alternatively, one can show that the generators $L_{q,\vec{N}}$ and $L_{q,\vec{N},\rho}^\mathsf{R}$ are self-adjoint with respect to $w$ and $W_\mathsf{R}$ respectively. This is done via quantum algebra techniques at the end of Section \ref{subsec:reprUqASEP} and the beginning of Section \ref{subsec:RevDynASEP}.
	
	(iii) The first orthogonality relation follows straightforwardly by applying the orthogonality relations \eqref{eq:orthogonality 1-site k} for the 1-site duality function with respect to the weight $w$. The second orthogonality relation can also be checked directly using the orthogonality \eqref{eq:orthogonality 1-site k} with respect to $W$. This computation is a bit more involved than the first computation because of the nested product structure of the duality functions and the weight function. Alternatively, the second orthogonality follows from the first using standard linear algebra: if $A$ is a square matrix of finite size satisfying $A^tA=I$, then $AA^t=I$ also holds.
\end{proof}
\begin{remark}
	Since $\RASEP{q,\vec{N},\rho}$ is reversible, we get in particular that the process is self-dual. At the moment however, besides the cheap duality functions, i.e. of the  Kronecker-delta type, we have no explicit self-duality functions.
\end{remark}
\begin{remark}
	If for each $k$ one takes $N_k=2j$, $j\in\half \N$, part (i) of the above theorem gives the same family of reversible measures as in Theorem 3.1(a) in \cite{CGRS} (note that in this paper a slightly different definition for the $q$-binomial coefficient is used).
\end{remark}
\begin{remark}
	Since $\RASEP{q,\vec{N},\rho}$ is invariant under sending $q\to q^{-1}$, one would suspect that its reversible measure $W_\mathsf{R}$ is as well. This is true up to a factor depending on $|\xi|$ and $|\vec{N}|$. Indeed, one can compute that for the single site weight function we have
	\begin{align*}
		W(x;N,\rho;q^{-1})=W(x;N,\rho;q) q^{4x(x+\rho-N)+N(N-2\rho-1)}.
	\end{align*}
	Therefore, we can make the single site weight function invariant by taking
	\begin{align*}
		W^\text{inv}(x;N,\rho;q) = q^{2x(x+\rho-N)+\half N(N-2\rho-1)}	W(x;N,\rho;q).
	\end{align*}
	Then, we can define a measure by taking the product of these single site weight functions,
	\begin{align*}
		W_\mathsf{R}^\text{inv}(\xi,\vec{N},\rho;q) = \prod_{k=1}^M W^{\text{inv}}(\xi_k;N_k,h^{+}_{k+1}(\xi);q).
	\end{align*}
	At this point, it is not clear at all that this is still a reversible measure. However, a direct computation shows that 
	\begin{align*}
		W_\mathsf{R}^\text{inv}(\xi,\vec{N},\rho;q) = q^{z(|\xi|,|\vec{N}|,\rho)}\prod_{k=1}^M W(\xi_k;N_k,h^{+}_{k+1}(\xi);q),
	\end{align*}
	where
	\[
	z(a,b,c)=2a\big( a-b\big)+\half b\big(b-1\big) + c(2a-b).
	\]
	Thus by Remark \ref{rem:InvTotPart}, $W^\text{inv}_\mathsf{R}$ is a reversible measure since $W_\mathsf{R}$ is. While $W_{\mathsf R}^\text{inv}$ is invariant under $q \leftrightarrow q^{-1}$, $W_{\mathsf R}$ itself has the useful property that for any $1\leq j \leq M$,
	\begin{align*}
		\sum_{\xi_j,..,\xi_M} \prod_{k=j}^M W(\xi_k;N_k,h_{k+1}(\xi);q) = 1.
	\end{align*}
	This property, which follows directly from the second equality of Theorem \ref{Thm:reversiblemeasures}(iii), taking $\eta=\eta'=0$ and realizing that $M$ can be chosen arbitrarily without changing the single site weight functions, is no longer true for $W_\mathsf{R}^\text{inv}$.	Throughout this paper, we will work with $W_{\mathsf R}$.

\end{remark} 
\begin{remark}
	In \cite{BoCo}, (half-)stationary initial data for the standard dynamic ASEP ($N_k=1$ for all $k$) is generated by a Markov chain $(s_k)_{k\in\Z}$ which has transition probabilities
	\begin{align*}
		\mathbb{P}(s_{k-1} = s_k + 1)= \frac{q^{s_k}}{\alpha + q^{s_k}} \qquad \text{and} \qquad \mathbb{P}(s_{k-1} = s_k - 1)= \frac{\alpha}{\alpha + q^{s_k}}.
	\end{align*}
	In the language of this paper, we have to take $\alpha =q^{-\rho}$, $h^+_k = s_k +\rho$ and $q^2$ instead of $q$, so that we obtain
	\begin{align*}
		\mathbb{P}(h_{k-1}^+ = h_k^+ + 1)= \frac{q^{2h_k^+}}{1 + q^{2h_k^+}} \qquad \text{and} \qquad \mathbb{P}(h_{k-1}^+ = h_k^+ - 1)= \frac{1}{1 + q^{2h_k^+}}.
	\end{align*}
	The reversible measure $W_\mathsf{R}$ is connected to this (half-)stationary initial data in the following way. Take $N_1=N_2=...=N_M=1$, assume $h^+_{k}=c\in\R$ is given and define $A$ to be the set of $\xi_k,...,\xi_M$ such that  $h^+_k = c$ (which is of course only possible for specific values of $\rho$). Then by Bayes' law,
	\begin{align}
		\mathbb{P}(h_{k-1}^+=h_k^++1|h^+_k=c) &= \frac{\mathbb{P}(h_{k-1}^+=h_k^++1 \text{ and }h^+_k=c)}{\mathbb{P}(h^+_k=c)}\nonumber \\
		&= \frac{\displaystyle \sum_{\xi\in \{0,1\}^{k-2}\times \{1\}\times A } W_\mathsf{R}(\xi;\vec{N},\rho;q)}{\displaystyle \sum_{\xi\in\{0,1\}^{k-1}\times A} W_\mathsf{R}(\xi;\vec{N},\rho;q)}\nonumber.
	\end{align}
	If we now divide both numerator and denominator by
	\[
	\displaystyle \sum_{\xi_k,...,\xi_M\in A}\ \prod_{j=k}^{M}W(\xi_j;1,h^+_{j+1};q),
	\] 
	we obtain
	\begin{align}
		= \frac{\displaystyle\sum_{\xi_1,...,\xi_{k-2}\in\{0,1\}^{k-2}}W(1;1,c;q)W(\xi_{k-2};1,c+1;q)\prod_{j=1}^{k-3}W(\xi_j;1,h^+_{j+1};q)}{\displaystyle\sum_{\xi_1,...,\xi_{k-1}\in\{0,1\}^{k-1}}W(\xi_{k-1};1,c;q)W(\xi_{k-2};1,c+2\xi_{k-1}-1;q)\prod_{j=1}^{k-3}W(\xi_j;1,h^+_{j+1};q)}. \label{eq:fracrevmeasures}
	\end{align}
	Since $\displaystyle\sum_\xi W_\mathsf{R}(\xi;\vec{N},\rho;q)=1$ for any $\rho$ and number of sites $M$, we have 	
	\begin{align*}
		\displaystyle\sum_{\xi_1,...,\xi_{k-2}\in\{0,1\}^{k-2}}W(\xi_{k-2};,1,c+1;q)\prod_{j=1}^{k-3} W(\xi_j;1,h^+_{j+1};q)=&1,\\
		\displaystyle\sum_{\xi_1,...,\xi_{k-2}\in\{0,1\}^{k-2}}W(\xi_{k-2};1,c-1;q)\prod_{j=1}^{k-3}W(\xi_j;1,h^+_{j+1};q)=&1,\\
		W(0;1,c;q)+W(1;1,c;q)=&1.
	\end{align*}
	Therefore, \eqref{eq:fracrevmeasures} is equal to
	\begin{align*}
		\frac{W(1;1,c;q)}{W(0;1,c;q)+W(1;1,c;q)} = W(1;1,c;q).
	\end{align*}
	Similarly, we have
	\[
	\mathbb{P}(h_{k-1}^+=h_k^+-1|h^+_k=c) = W(0;1,c;q).
	\]
	Thus
	\begin{align*}
		&h^+_{k-1} = h^+_{k} + 1 \text{ with probability } W(1;1,h_{k}^+;q) = \frac{q^{-2h_k^+}}{1+q^{-2h_k^+}},\\
		&h^+_{k-1} = h^+_{k} - 1 \text{ with probability } W(0;1,h_{k}^+;q) = \frac{1}{1+q^{-2h_k^+}}.
	\end{align*}
	Taking $q^{-1}$ instead of $q$ (which is possible since the process is invariant under this transformation) we get the desired probabilities of the Markov chain which generates the (half-)stationary initial data. In \cite{BoCo} the initial data is found using an orthogonality measure for $q^{-1}$-Hermite polynomials, so interestingly, the orthogonality measure of our multivariate $q$-Krawtchouk polynomials for $N_1=\ldots=N_M=1$ is connected to that particular orthogonality measure of $q^{-1}$-Hermite polynomials. 
\end{remark}

If we let the dynamic parameter $\rho$ tend to $\pm \infty$, one expects the reversible measure $W$ for dynamic ASEP to become the reversible measure $w$ for ASEP. This is, up to functions depending on the total number of particles (see Remark \ref{rem:InvTotPart}), indeed the case. Let us assume $q \in (0,1)$. Then the following limit relations hold,
\begin{equation} \label{eq:rho->infty W->w}
	\begin{split}
		\lim_{\rho \to \infty}  q^{2\rho(|\xi|-|\vec{N}|)} W_{\mathsf R}(\xi;\vec{N},\rho;q) 
		&= q^{\be_1(|\xi|,|\vec{N}|) } w(\xi;\vec{N};q^{-1}),\\
		\lim_{\rho \to -\infty} q^{2\rho|\xi|} W_{\mathsf R}(\xi;\vec{N},\rho;q)
		&= q^{\be_2(|\xi|,|\vec{N}|)} w(\xi;\vec{N};q),
	\end{split}
\end{equation}
with 
\[
\begin{split}
	\be_1(a,b) &= b-a(a+1)- (b-a)^2,\\
	\be_2(a,b) & = a(1-2a+2b).
\end{split}
\]
See Section \ref{sec:calculations of limits} for the explicit calculations.

\subsection{Dynamic ASEP on the reversed lattice}\label{subsec:LASEP}
In the higher spin version of dynamic ASEP from Definition \ref{Def:dynamic ASEP} the value $h^{+}_{k}$ of the height function at site $k$ is determined by the number of particles on the right of site $k$ and the boundary value $\rho$ at the most right side. Here we assume that the sites are labeled from left to right: site $1$ is the most left site and site $M$ is the most right site. In a similar way, we can define a higher spin version of dynamic ASEP where the value of the height function at site $k$ depends on the number of particles on the left of site $k$ and the boundary value at the most left site. In particular, this left dynamic ASEP is the right dynamic ASEP defined on the reversed lattice, i.e.~the sites are labeled from right to left. Relabeling $k \mapsto M-k+1$ then gives a process (on the lattice labeled from left to right) where the rates to and from site $k$ depend on the number of particles on the left of site $k$ and the boundary value of the height function at a virtual site on the left (site $0$). We denote this `left version' of generalized dynamic ASEP by $\LASEP{q,\vec{N},\lambda}$ and its generator by $\genLASEP$. Here, $\lambda \in \R$ is the value of the height function on the virtual site $0$ and, as before, $q>0$ and $\vec{N} \in \N^M$. \\ 

Let us write down the jump rates of the process explicitly. Given a state $\zeta=(\zeta_k)_{k=1}^M \in X$, the  height function $(h^{-}_{k}(\zeta))_{k=1}^M$ is given by
\[
h^{-}_{k} = h^{-}_{k,\lambda}(\zeta) = \lambda + \sum_{j=1}^{k} (2\zeta_j - N_j),
\]
where we again suppress the dependence on $\lambda$ and/or $\zeta$ unless confusion could arise.
We set the left boundary value at the virtual site $0$ by
\begin{align*}
	h^-_{0} = \lambda.
\end{align*}
A particle on site $k$ jumps to site $k+1$ at rate $C^{\mathsf L,+}_{k}(\zeta) = C_{M-k+1}^{\mathsf R,-}(\zeta^{\rev},\vec{N}^\rev)$, and a particle on site $k$ jumps to site $k-1$ at rate $C^{\mathsf L,-}_{k}(\zeta) = C^{\mathsf R,+}_{M-k+1}(\zeta^\rev,\vec{N}^\rev)$. Also, observe that 
\[
h^+_{M-k+1,\lambda}(\zeta^\rev) = h^-_{k,\lambda}(\zeta).
\]
Then a small calculation shows that the jump rates are given by
\[
C^{\mathsf L,+}_k(\zeta) = c^+_k(\zeta) \frac{(1+q^{2(h^{-}_{k-1}+\zeta_k)})(1+q^{2(h^{-}_{k}+\zeta_{k+1})})}{(1+q^{2h^{-}_{k}})(1+q^{2(h^{-}_{k}-1)})},
\]
and 
\[
C^{\mathsf{L},-}_{k}(\zeta) = c^-_{k}(\zeta) \frac{(1+q^{2(h^{-}_{k-1}-\zeta_{k-1})})(1+q^{2(h^{-}_{k}-\zeta_{k})})}{(1+q^{2h^{-}_{k-1}})(1+q^{2(h^{-}_{k-1}+1)})}.
\]
\begin{remark}\*
	\begin{itemize}
		\item We see that, for $q \in (0,1)$, in the limit $\lambda \to \infty$ we recover $\ASEP{q,\vec{N}}$ from $\LASEP{q,\vec{N},\lambda}$ and in the limit $\lambda \to -\infty$ we recover $\ASEP{q^{-1},\vec{N}}$.
		\item  When $N_1=...=N_M=1$, we obtain
		\[
		C^{\mathsf L,+}_k(\zeta) = q^{-1} \frac{1+q^{2h^{-}_{k}}}{1+q^{2h^{-}_{k}-2}},
		\]
		and 
		\[
		C^{\mathsf{L},-}_{k+1}(\zeta) = q \frac{1+q^{2h^{-}_{k}}}{1+q^{2h^{-}_{k}+2}}.
		\] 
		Notice that these are exactly the rates from \cite{BoCo} for jumps of the height function of standard dynamic ASEP where the height function is replaced by minus the height function.
	\end{itemize}
\end{remark}

Since ASEP$_{\mathsf L}$ is the same as ASEP$_{\mathsf R}$ on the reversed lattice, by Corollary  \ref{Cor:dualityASEPRASEP} it follows that $\LASEP{q,\vec{N},\lambda}$ is in duality with (nondynamic) ASEP with parameter $q^{-1}$ on the reversed lattice, with duality function $K_{\mathsf R}(\eta^\rev,\zeta^\rev;\lambda,\vec{N}^\rev;q^{-1})$ for $\eta,\zeta \in X$.  Reversing the lattice for ASEP is the same as replacing parameter $q$ by $q^{-1}$ on the nonreversed lattice. To be precise, for the jump rates we have
\[
\begin{split}
	c_{M-k+1}^+(\eta^\rev;\vec{N}^\rev;q^{-1}) &= c_{k}^-(\eta;\vec{N};q), \\
	c_{M-k+1}^-(\eta^\rev;\vec{N}^\rev;q^{-1}) &= c_{k}^+(\eta;\vec{N};q).
\end{split}
\]
It follows that we have duality between $\LASEP{q,\vec{N},\lambda}$ and $\ASEP{q,\vec{N}}$ with the following nested products of $q$-Krawtchouk polynomials as duality functions
\[
q^{\frac12 u(\eta^\rev;\vec{N}^\rev)} \prod_{k=1}^M k(\eta_k, \zeta_k;h^{-}_{k-1}(\zeta);N_k;q^{-1}).
\]
In order to have orthogonality with respect to reversible measure $w$ from \eqref{eq:Weightfunction w} we multiply this by
\[
q^{-\frac12[u(\eta;\vec{N})+u(\eta^\rev;\vec{N}^\rev)]} = q^{|\eta| |\vec{N}|},
\]
to obtain the following duality result. 
\begin{theorem} \label{Thm:dualityASEPLASEP}
	For $\eta,\zeta \in X$ define 
	\[
	K_{\mathsf L}(\eta,\zeta) = q^{-\frac12 u(\eta;\vec{N})} \prod_{k=1}^M k(\eta_k, \zeta_k; h^{-}_{k-1}(\zeta);N_k;q^{-1} ),
	\]
	then
	\[
	[L_{q,\vec{N}}K_{\mathsf L}(\,\cdot\,,\zeta)](\eta) = [L_{q,\vec{N},\lambda}^{\mathsf L} K_{\mathsf L}(\eta,\,\cdot\,)](\zeta).
	\]
\end{theorem}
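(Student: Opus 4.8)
The plan is to obtain Theorem \ref{Thm:dualityASEPLASEP} from Corollary \ref{Cor:dualityASEPRASEP} by transporting that duality through a reversal of the lattice and then correcting the resulting duality function by a factor invariant under both generators, exactly as prepared in the discussion preceding the statement. For this I would first isolate the underlying principle that Markov duality is preserved under simultaneous lattice reversal. Given a duality function $D(\eta,\xi)$ between generators $L$ and $\widehat L$, define the reversed generators by $\widetilde L f(\eta)=[L(f\circ\rev)](\eta^\rev)$ and likewise $\widetilde{\widehat L}$. Then $\widetilde D(\eta,\xi):=D(\eta^\rev,\xi^\rev)$ is a duality function between $\widetilde L$ and $\widetilde{\widehat L}$: unwinding the definitions gives $[\widetilde L\,\widetilde D(\cdot,\xi)](\eta)=[L\,D(\cdot,\xi^\rev)](\eta^\rev)$ and $[\widetilde{\widehat L}\,\widetilde D(\eta,\cdot)](\xi)=[\widehat L\,D(\eta^\rev,\cdot)](\xi^\rev)$, so the reversed duality equation is precisely the original one evaluated at $(\eta^\rev,\xi^\rev)$.

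I would then apply Corollary \ref{Cor:dualityASEPRASEP} with parameters $(q,\vec{N}^\rev,\lambda)$, which gives a duality between $L_{q^{-1},\vec{N}^\rev}$ and $L^{\mathsf R}_{q,\vec{N}^\rev,\lambda}$ with duality function $K_{\mathsf R}(\,\cdot\,,\,\cdot\,;\lambda,\vec{N}^\rev;q^{-1})$, and reverse the lattice. By the very definition of the left process, the reversed dynamic generator $\widetilde{L^{\mathsf R}_{q,\vec{N}^\rev,\lambda}}$ is exactly $\genLASEP$; and the stated rate identities $c^{\pm}_{M-k+1}(\eta^\rev;\vec{N}^\rev;q^{-1})=c^{\mp}_k(\eta;\vec{N};q)$ show that the reversed ASEP generator $\widetilde{L_{q^{-1},\vec{N}^\rev}}$ is exactly $L_{q,\vec{N}}$. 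The reversal principle above then yields a duality between $L_{q,\vec{N}}$ and $\genLASEP$ with duality function $K_{\mathsf R}(\eta^\rev,\zeta^\rev;\lambda,\vec{N}^\rev;q^{-1})$.

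It remains to put this function into the claimed form and normalize it. Expanding $K_{\mathsf R}$ by its definition \eqref{eq:duality function K}, using the height-function identity $h^+_{M-k+1,\lambda}(\zeta^\rev)=h^-_{k,\lambda}(\zeta)$, and re-indexing $k\mapsto M-k+1$ inside the product, yields
\[
K_{\mathsf R}(\eta^\rev,\zeta^\rev;\lambda,\vec{N}^\rev;q^{-1})=q^{\frac12 u(\eta^\rev;\vec{N}^\rev)}\prod_{k=1}^M k(\eta_k,\zeta_k;h^-_{k-1}(\zeta);N_k;q^{-1}),
\]
where the sign of the prefactor exponent has flipped because of the $q\mapsto q^{-1}$ substitution in $K_{\mathsf R}$. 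Finally I would multiply by $q^{-\frac12[u(\eta;\vec{N})+u(\eta^\rev;\vec{N}^\rev)]}$, which a direct computation from \eqref{eq: u(eta,N} shows equals $q^{|\eta||\vec{N}|}$, since the two $u$-terms combine to $-2|\eta||\vec{N}|$. Because this factor depends only on the conserved total particle number $|\eta|$ and on the fixed $|\vec{N}|$, Remark \ref{rem:InvTotPart} guarantees that the product is again a duality function; by construction it is precisely $K_{\mathsf L}(\eta,\zeta)$, completing the argument.

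The only genuinely delicate point I anticipate is the bookkeeping of the reversal: tracking which argument is reversed, how the site indices and per-site height functions behave under $k\mapsto M-k+1$, and verifying the prefactor identity $u(\eta;\vec{N})+u(\eta^\rev;\vec{N}^\rev)=-2|\eta||\vec{N}|$. Once these are pinned down, the duality follows essentially for free from the Corollary together with Remark \ref{rem:InvTotPart}, with no new analytic input on the $q$-Krawtchouk polynomials required.
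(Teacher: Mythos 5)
Your proposal is correct and follows essentially the same route as the paper: the theorem is obtained exactly as in the discussion preceding its statement, namely by applying Corollary \ref{Cor:dualityASEPRASEP} on the reversed lattice, using the rate identities $c^{\pm}_{M-k+1}(\eta^\rev;\vec{N}^\rev;q^{-1})=c^{\mp}_k(\eta;\vec{N};q)$ and the height-function relation $h^+_{M-k+1,\lambda}(\zeta^\rev)=h^-_{k,\lambda}(\zeta)$, and then normalizing by $q^{-\frac12[u(\eta;\vec{N})+u(\eta^\rev;\vec{N}^\rev)]}=q^{|\eta||\vec{N}|}$ via Remark \ref{rem:InvTotPart}. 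Your explicit formulation of the lattice-reversal principle and the verification $u(\eta;\vec{N})+u(\eta^\rev;\vec{N}^\rev)=-2|\eta||\vec{N}|$ are both accurate and merely make explicit what the paper leaves implicit.
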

In this case, we have again orthogonality relations with respect to the $\ASEP{q,\vec{N}}$ reversible measure $w(\eta;q,\vec{N})$, which by \eqref{eq:orthokrawtchoukASEP q<->q_inv} can written as 
\[
w(\eta;\vec{N};q) = q^{u(\eta;\vec{N})} \prod_{k=1}^M w(\eta_k;N_k;q^{-1}),
\]
and the $\LASEP{q,\vec{N},\lambda}$ reversible measure
\begin{equation} \label{eq: WL=WR-rev}
	\begin{split}
		W_{\mathsf L}(\zeta;\vec{N},\lambda;q) &= W_{\mathsf R}(\zeta^\rev;\vec{N}^\rev,\lambda;q^{-1}) \\ 
		&= \prod_{k=1}^M W(\zeta_k;N_k,h^{-}_{k-1}(\zeta);q^{-1}).
	\end{split}
\end{equation}
Summarizing, we obtain the following corollary which is similar to Theorem \ref{Thm:dualityASEPRASEP}.
\begin{corollary}\* \label{Cor:reversiblemeasures}
	\begin{enumerate}[label = (\roman*)]
		\item The weight function $W_{\mathsf L}(\,\cdot\,;\vec{N},\lambda;q)$ is a reversible measure for $\LASEP{q,\vec{N},\lambda}$.
		\item The duality functions $K_{\mathsf L}(\eta,\zeta)$ defined in Theorem \ref{Thm:dualityASEPLASEP} satisfy the orthogonality relations
		\[
		\begin{gathered}
			\sum_{\eta \in X} K_{\mathsf L}(\eta,\zeta) K_{\mathsf L}(\eta,\zeta') w(\eta;\vec{N};q) = \frac{\de_{\zeta,\zeta'}}{W_{\mathsf L}(\zeta;\vec{N},\lambda;q)},\\
			\sum_{\zeta \in X} K_{\mathsf L}(\eta,\zeta) K_{\mathsf L}(\eta',\zeta) W_{\mathsf L}(\zeta;\vec{N},\lambda;q) = \frac{\de_{\eta,\eta'}}{w(\eta;\vec{N};q)}.
		\end{gathered}
		\]
	\end{enumerate}
\end{corollary}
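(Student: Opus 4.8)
The plan is to obtain both parts as direct consequences of the right-lattice results in Theorem~\ref{Thm:reversiblemeasures}, transported through the lattice-reversal identification of $\LASEP{q,\vec{N},\lambda}$ with $\RASEP{q,\vec{N}^\rev,\lambda}$ recorded in Section~\ref{subsec:LASEP}. Concretely, let $R$ denote the reversal operator $(Rf)(\zeta)=f(\zeta^\rev)$, an involution on functions on $X$. The rate identities $C^{\mathsf L,+}_k(\zeta)=C^{\mathsf R,-}_{M-k+1}(\zeta^\rev,\vec{N}^\rev)$ and $C^{\mathsf L,-}_k(\zeta)=C^{\mathsf R,+}_{M-k+1}(\zeta^\rev,\vec{N}^\rev)$ say precisely that a jump to the right at site $k$ in the $\zeta$-picture becomes a jump to the left at site $M-k+1$ after reversal, with matching rates; hence the generators are conjugate, $\genLASEP=R\,L^{\mathsf R}_{q,\vec{N}^\rev,\lambda}\,R$. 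Using that $\RASEP{\cdot}$ is invariant under $q\mapsto q^{-1}$, we may equally write $\genLASEP=R\,L^{\mathsf R}_{q^{-1},\vec{N}^\rev,\lambda}\,R$, which is the form matching the parameter $q^{-1}$ appearing in $K_{\mathsf L}$ and $W_{\mathsf L}$.

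For part (i) I would simply push forward the reversible measure. A measure $\mu$ satisfying detailed balance for a generator $L$ yields a measure $\mu\circ\rev$ satisfying detailed balance for $R\,L\,R$, since the conjugate rates are $(R\,L\,R)(\zeta,\zeta')=L(\zeta^\rev,\zeta'^\rev)$. By Theorem~\ref{Thm:reversiblemeasures}(ii), $W_{\mathsf R}(\,\cdot\,;\vec{N}^\rev,\lambda;q^{-1})$ is reversible for $L^{\mathsf R}_{q^{-1},\vec{N}^\rev,\lambda}$, so its pushforward $\zeta\mapsto W_{\mathsf R}(\zeta^\rev;\vec{N}^\rev,\lambda;q^{-1})$ is reversible for $\genLASEP$. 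By \eqref{eq: WL=WR-rev} this pushforward equals $W_{\mathsf L}$, which proves (i).

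For part (ii) I would apply the orthogonality relations of Theorem~\ref{Thm:reversiblemeasures}(iii) with the parameter substitution $q\to q^{-1}$, $\vec{N}\to\vec{N}^\rev$, $\rho\to\lambda$, and then reindex the summations by the bijection $\eta\mapsto\eta^\rev$, $\xi\mapsto\zeta^\rev$ of $X$. Three identities feed in. First, reindexing the nested product and using $h^+_{M-k+1}(\zeta^\rev)=h^-_k(\zeta)$ shows that $K_{\mathsf R}(\eta^\rev,\zeta^\rev;q^{-1},\vec{N}^\rev,\lambda)$ and $K_{\mathsf L}(\eta,\zeta)$ (Theorem~\ref{Thm:dualityASEPLASEP}) share the same nested product $\prod_k k(\eta_k,\zeta_k;h^-_{k-1}(\zeta);N_k;q^{-1})$, and therefore differ only by the prefactor $q^{\frac12[u(\eta;\vec{N})-u(\eta^\rev;\vec{N}^\rev)]}$. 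Second, \eqref{eq: WL=WR-rev} gives $W_{\mathsf R}(\zeta^\rev;\vec{N}^\rev,\lambda;q^{-1})=W_{\mathsf L}(\zeta;\vec{N},\lambda;q)$. Third, since the single-site weight $w(n;N;\,\cdot\,)$ is invariant under $q\mapsto q^{-1}$ (which is exactly what the two representations \eqref{eq:Weightfunction w} and \eqref{eq:orthokrawtchoukASEP q<->q_inv} of $w$ encode), the global weight transforms as $w(\eta^\rev;\vec{N}^\rev;q^{-1})=q^{u(\eta^\rev;\vec{N}^\rev)-u(\eta;\vec{N})}w(\eta;\vec{N};q)$. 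Substituting these into the transformed first orthogonality relation, the exponent $u(\eta;\vec{N})-u(\eta^\rev;\vec{N}^\rev)$ accumulated from the two copies of $K_{\mathsf R}$ is exactly cancelled by the exponent $u(\eta^\rev;\vec{N}^\rev)-u(\eta;\vec{N})$ coming from the weight, leaving $\sum_{\eta\in X}K_{\mathsf L}(\eta,\zeta)K_{\mathsf L}(\eta,\zeta')w(\eta;\vec{N};q)=\de_{\zeta,\zeta'}/W_{\mathsf L}(\zeta;\vec{N},\lambda;q)$; the second relation follows identically (or, as in Theorem~\ref{Thm:reversiblemeasures}(iii), from the first via the elementary implication $A^tA=I\Rightarrow AA^t=I$).

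The argument is conceptually short, so the only genuine obstacle is the bookkeeping of the powers of $q$. Everything hinges on the observation that each mismatch introduced by reversal and by $q\mapsto q^{-1}$ is a power of $q$ whose exponent is a function of $u$ and of the total particle number $|\eta|$ alone; by the philosophy of Remark~\ref{rem:InvTotPart} such factors are inert for reversibility, and here the point to check carefully is that in the orthogonality sums they cancel \emph{exactly} rather than merely up to a total-particle factor. The one computation to carry out explicitly is the single-site invariance $w(n;N;q^{-1})=w(n;N;q)$, which reduces to the $q$-binomial symmetry $\qbinom{q^{-2}}{N}{n}=q^{-2n(N-n)}\qbinom{q^2}{N}{n}$ and is already implicit in \eqref{eq:orthokrawtchoukASEP q<->q_inv}.
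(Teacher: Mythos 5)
Your route is exactly the paper's: Corollary \ref{Cor:reversiblemeasures} is obtained there (without a displayed proof, via ``Summarizing, we obtain the following corollary'') precisely by transporting Theorem \ref{Thm:reversiblemeasures} through the reversal $\zeta\mapsto\zeta^\rev$ combined with $q\mapsto q^{-1}$, $\vec{N}\mapsto\vec{N}^\rev$, $\rho\mapsto\lambda$, which is your conjugation $\genLASEP=R\,L^{\mathsf R}_{q^{-1},\vec{N}^\rev,\lambda}\,R$, your pushforward argument for (i) via \eqref{eq: WL=WR-rev}, and your reindexed orthogonality for (ii).

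One correction to the bookkeeping you yourself flag as the crux: both of your intermediate exponent identities carry a flipped sign on $u(\eta^\rev;\vec{N}^\rev)$. Since under $q\mapsto q^{-1}$ the prefactor $q^{-\frac12 u}$ in \eqref{eq:duality function K} becomes $q^{+\frac12 u(\eta^\rev;\vec{N}^\rev)}$, the true relation is $K_{\mathsf L}(\eta,\zeta)=q^{-\frac12[u(\eta;\vec{N})+u(\eta^\rev;\vec{N}^\rev)]}\,K_{\mathsf R}(\eta^\rev,\zeta^\rev;\lambda,\vec{N}^\rev;q^{-1})=q^{|\eta||\vec{N}|}\,K_{\mathsf R}(\eta^\rev,\zeta^\rev;\lambda,\vec{N}^\rev;q^{-1})$, not a mismatch of $q^{\frac12[u(\eta;\vec{N})-u(\eta^\rev;\vec{N}^\rev)]}$; similarly $w(\eta^\rev;\vec{N}^\rev;q^{-1})=q^{-[u(\eta;\vec{N})+u(\eta^\rev;\vec{N}^\rev)]}\,w(\eta;\vec{N};q)=q^{2|\eta||\vec{N}|}\,w(\eta;\vec{N};q)$, not $q^{u(\eta^\rev;\vec{N}^\rev)-u(\eta;\vec{N})}\,w(\eta;\vec{N};q)$. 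As written, your two identities are false for generic $\eta$ (they would require $u(\eta^\rev;\vec{N}^\rev)=0$), but the slips are compensating: in the first orthogonality sum the two $K$-factors contribute $q^{-2|\eta||\vec{N}|}$ and the weight contributes $q^{+2|\eta||\vec{N}|}$, so the exact cancellation you assert --- and hence the corollary --- survives with the corrected exponents. The identity doing the work is $u(\eta;\vec{N})+u(\eta^\rev;\vec{N}^\rev)=-2|\eta||\vec{N}|$, which is exactly what the paper invokes when it normalizes the left duality function by $q^{|\eta||\vec{N}|}$ before stating Theorem \ref{Thm:dualityASEPLASEP}; the rest of your argument (the rate conjugation, the detailed-balance pushforward, $h^+_{M-k+1,\lambda}(\zeta^\rev)=h^-_{k,\lambda}(\zeta)$, the single-site invariance $w(n;N;q)=w(n;N;q^{-1})$, and the $A^tA=I\Rightarrow AA^t=I$ shortcut for the second relation) is correct as stated.
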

Similar to before, we can obtain the reversible measures for $\ASEP{q,\vec{N}}$ and $\ASEP{q^{-1},\vec{N}}$ by taking limits from $W_\mathsf{L}$,
\begin{equation} \label{eq:la->pm infty W->w}
	\begin{gathered}
		\lim_{\la \to \infty} q^{-2\la|\zeta|} W_{\mathsf L}(\zeta;\vec{N},\la;q)
		= q^{\ga_1(|\zeta|)}w(\zeta;\vec{N};q), \\
		\lim_{\la \to -\infty}  q^{-2\la(|\zeta|-|\vec{N}|)} W_{\mathsf L}(\zeta;\vec{N},\la;q) 
		= q^{\ga_2(|\zeta|,|\vec{N}|) } w(\zeta;\vec{N};q^{-1}).
	\end{gathered}
\end{equation}
with
\[
\begin{split}
	\ga_1(a) & = a(2a-1)\\
	\ga_2(a,b) & = -b+a(a+1)+ 2(b-a)^2-2ab.
\end{split}
\]
\subsection{Almost self-duality for dynamic ASEP}
Since both $\LASEP{q,\vec{N},\lambda}$ and $\RASEP{q,\vec{N},\rho}$ are dual to $\ASEP{q,\vec{N}}$ with respect to the duality function $K_{\mathsf L}$ and $K_{\mathsf R}$ respectively, they are dual to each other with respect to the duality function $R^v:X \times X \to \R$ given by
\begin{align}\label{eq:sumKrawtchouk}
R^v(\zeta,\xi) =R^v(\zeta,\xi;\lambda,\rho,\vec{N};q)= \sum_{\eta \in X} K_{\mathsf L}(\eta,\zeta) K_{\mathsf R}(\eta,\xi) w(\eta) v^{|\eta|},
\end{align}
where $v \in \R^\times$ is a free parameter. That is, we have the duality relation,
\begin{align}\label{eq:dualityLASEPRASEP}
[L_{q,\vec{N},\lambda}^{\mathsf L} R^v(\, \cdot\, ,\xi)](\zeta) = [L_{q,\vec{N},\rho}^{\mathsf R} R^v(\zeta,\, \cdot\, )](\xi).
\end{align}
See section \ref{subsec:dualityLASEPRASEP} for more details of this scalar-product method, which is used before in e.g.~\cite{CarFraGiaGroRed}.
The function $R^v(\zeta,\xi)$ can be expressed as a nested product of $q$-Racah polynomials. The latter are $q$-hypergeometric polynomials defined by
\begin{align}
R_n(x;\alpha,\beta,\ga,\de;q) = \rphis{4}{3}{q^{-n},\alpha\beta q^{n+1}, q^{-x}, \gamma \delta q^{x+1} }{\alpha q, \beta \delta q, \gamma q}{q,q}. \label{eq:racah4phi3}
\end{align}
For $x,n \in \N$ these are polynomials in $q^{-x} + \gamma \delta q^{x+1}$ of degree $n$ and also polynomials in $q^{-n}+\alpha \beta q^{n+1}$ of degree $x$. The following theorem shows that the duality functions $R^v(\zeta,\xi)$ are a product of these $q$-Racah polynomials.
\begin{theorem}\label{Thm:KrawtchoukRacah}
	We define 1-site duality functions by
	\begin{equation} \label{eq:1-site duality r}
		\begin{split}
			r(y,x;\lambda,\rho,v,N;q) & =c_{\mathrm r}(y,x;\lambda,\rho,v,N;q)R_{y}(x;\alpha,\beta,\gamma,\delta;q^2),
		\end{split}
	\end{equation}
	where
	\[
	(\alpha,\beta,\gamma,\delta) = \left(-v^{-1}q^{\rho+\lambda-N-1}, vq^{\rho-\lambda-N-1}, q^{-2N-2}, -q^{2\lambda} \right).
	\]
	and the coefficient $c_{\mathrm r}$ can be found in Appendix \ref{app:overview}. Then the duality function $R^v$ can then be written as
	\begin{equation} \label{eq:duality Racah polynomials}
		R^v(\zeta,\xi) = \prod_{k=1}^M r(\zeta_k,\xi_k;h^{-}_{k-1}(\zeta),h^{+}_{k+1}(\xi),v,N_k;q).
	\end{equation}
\end{theorem}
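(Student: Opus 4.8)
The plan is to reduce the multivariate identity \eqref{eq:duality Racah polynomials} to a single-site summation formula by exploiting the factorized structure of the scalar product, and then to establish that single-site formula by a $q$-hypergeometric computation. First I would substitute the explicit nested-product forms of $K_{\mathsf L}$, $K_{\mathsf R}$ and of the weight $w$ into the defining scalar product \eqref{eq:sumKrawtchouk}. The crucial bookkeeping observation is that the three powers of $q^{u(\eta;\vec{N})}$ cancel exactly: the factors $q^{-\frac12 u(\eta;\vec{N})}$ coming from $K_{\mathsf L}$ and from $K_{\mathsf R}$, together with the factor $q^{u(\eta;\vec{N})}$ in $w(\eta;\vec{N};q)$, multiply to $1$. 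Writing also $v^{|\eta|}=\prod_{k=1}^M v^{\eta_k}$, the whole summand becomes a product over $k$ in which the $k$-th factor depends on the summation variable only through $\eta_k$; the height functions $h^-_{k-1}(\zeta)$ and $h^+_{k+1}(\xi)$ that appear are functions of $\zeta$ and $\xi$ alone and carry no $\eta$-dependence. Consequently the sum over $\eta\in X$ factorizes completely over the sites, and \eqref{eq:duality Racah polynomials} holds provided we define, for each $k$,
\[
r(\zeta_k,\xi_k;h^-_{k-1}(\zeta),h^+_{k+1}(\xi),v,N_k;q) = \sum_{n=0}^{N_k} k(n,\zeta_k;h^-_{k-1}(\zeta);N_k;q^{-1})\, k(n,\xi_k;h^+_{k+1}(\xi);N_k;q)\, w(n;N_k;q)\, v^{n}.
\]
This is precisely the scalar-product method referenced after \eqref{eq:dualityLASEPRASEP}, and it reduces the theorem to the single-variable claim that this sum equals $c_{\mathrm r}\,R_y(x;\alpha,\beta,\gamma,\delta;q^2)$ with the stated parameters.

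It then remains to prove the single-site bilinear summation formula for $q$-Krawtchouk polynomials. Writing each $k(n,\,\cdot\,)$ in terms of the terminating ${}_3\varphi_2$'s of \eqref{eq:1siteduality} and inserting the explicit weight $w(n;N;q)=q^{n(n-N)}\qbinom{q^2}{N}{n}$, I would expand the $q^{-1}$-Krawtchouk factor $k(n,\zeta_k;\ldots;q^{-1})$ as a terminating series in an auxiliary index, reducing the $n$-dependence to a single geometric power $q^{cn}$ multiplying the remaining $q$-Krawtchouk factor and the weight. The inner sum over $n$ then becomes a generating-function-type sum of one $q$-Krawtchouk polynomial against its dual weight with a geometric argument, which can be evaluated in closed form by a $q$-Chu--Vandermonde (or $q$-Saalsch\"utz) summation. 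Resumming the result over the auxiliary index produces a terminating, balanced ${}_4\varphi_3$, which after matching parameters is exactly the $q$-Racah polynomial $R_y(x;\alpha,\beta,\gamma,\delta;q^2)$ with $(\alpha,\beta,\gamma,\delta)$ as displayed; carefully tracking all prefactors along the way yields the coefficient $c_{\mathrm r}$.

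The main obstacle is this last step: carrying out the interchange of summations and the inner $q$-summation so that the answer collapses to a single balanced ${}_4\varphi_3$, and then correctly identifying the four Racah parameters and the prefactor. A delicate point is that the two $q$-Krawtchouk factors carry opposite bases $q$ and $q^{-1}$ and distinct boundary values $h^-_{k-1}$ and $h^+_{k+1}$, so before summation one must use the symmetry relations between the $q$- and $q^{-1}$-versions of the polynomials and weights to put the two factors on a common footing. As an independent check on the parameter identification, I would use the representation-theoretic reading: $k(n,\,\cdot\,;q)$ and $k(n,\,\cdot\,;q^{-1})$ are the overlap coefficients between the standard basis and the eigenbases of two different twisted primitive elements of $\U_q(\mathfrak{sl}_2)$, so the sum over $n$ is the overlap between those two eigenbases, which is known to be a $q$-Racah polynomial.
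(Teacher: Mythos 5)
Your proposal is correct in its overall structure, and its first half is exactly the paper's argument: the paper likewise substitutes the nested products into the scalar product \eqref{eq:sumKrawtchouk}, observes that the factor $q^{u(\eta;\vec N)}$ from $w$ cancels against the two factors $q^{-\frac12 u(\eta;\vec N)}$ from $K_{\mathsf L}$ and $K_{\mathsf R}$, and interchanges sum and product to factorize the $\eta$-sum site by site, reducing the theorem to precisely the single-site bilinear sum you display (this is Lemma \ref{lem:krawthouckracah}). Where you genuinely diverge is in how that single-site sum is evaluated. You propose a from-scratch $q$-series derivation: expand one ${}_3\varphi_2$, interchange summations, evaluate the inner $n$-sum by $q$-Chu--Vandermonde, and resum into a balanced ${}_4\varphi_3$. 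The paper instead invokes a ready-made summation formula, Lemma 4.6 of \cite{Groeneveltquantumaskey} (reproduced as \eqref{eq:sumchiharaAW}), and obtains the result by a substitution involving complex parameters such as $\sigma=iq^{\rho}$, $\tau=-iq^{-\lambda}$, after which the work consists only of simplifying the prefactors into $c_{\mathrm r}$. Your route is self-contained but is left at the level of a plan at exactly its hardest point --- the collapse of the double sum into a single balanced ${}_4\varphi_3$ with the stated four parameters --- and your phrase about reducing the $n$-dependence to ``a single geometric power'' is loose: expanding the $q^{-1}$-Krawtchouk factor produces shifted factorials $(q^{2n};q^{-2})_j$, which must first be rewritten (e.g.\ as $(q^2;q^2)_n/(q^2;q^2)_{n-j}$ with an index shift) before the $q$-binomial/Vandermonde machinery applies; this is doable but is real work the paper deliberately offloads. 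Amusingly, your ``independent check'' via overlap coefficients of eigenbases of two twisted primitive elements is not merely a check: it is the actual provenance of the cited lemma (Koornwinder, Rosengren, and \cite{Groeneveltquantumaskey}), and the paper says as much at the start of Section \ref{sec:LeftDynASEP}.

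One concrete caveat on bookkeeping: in the paper's Section \ref{subsec:dualityLASEPRASEP} the left duality function is twisted by $(-v)^{|\eta|}$, so the single-site identity \eqref{eq:univarqracah} carries $(-v)^n$, not the $v^n$ you wrote (the main text's \eqref{eq:sumKrawtchouk} with $v^{|\eta|}$ is the source of this wobble). Since the target parameters $(\alpha,\beta)=(-v^{-1}q^{\rho+\lambda-N-1},vq^{\rho-\lambda-N-1})$ and the coefficient $c_{\mathrm r}$ place $v$ and $-v$ in specific slots, with your sign convention you would land on the same identity with $v\mapsto -v$, and you must fix one convention consistently or the matching of $(\alpha,\beta,\gamma,\delta)$ and $c_{\mathrm r}$ will fail by signs.
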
 
	The full proof can be found in Section \ref{subsec:dualityLASEPRASEP}. The proof boils down to showing the sum \eqref{eq:sumKrawtchouk} of $q$-Krawtchouk polynomials can be expressed in terms of $q$-Racah polynomials, see Lemma \ref{lem:krawthouckracah}. \\
	\\
	Note that \eqref{eq:duality Racah polynomials} is now a `doubly nested' product as the $k$-th factor corresponding to site $k$ depends on the particles on the left of site $k$ through $h^{-}_{k-1}(\zeta)$ as well as on the dual particles on the right of site $k$ through $h^{+}_{k+1}(\xi)$. The following results says that $R^v$ is an orthogonal duality function between $\LASEP{q,\vec{N},\lambda}$ and $\RASEP{q,\vec{N},\rho}$.
\begin{theorem}\* \label{Thm:orthogonaldualityLASEPRASEP}
		The duality function $R^v$ satisfies the following orthogonality relations
		\[
		\begin{split}
			\sum_{\zeta \in X} R^v(\zeta,\xi) R^{v^{-1}}(\zeta,\xi') W_{\mathsf L}(\zeta;\vec{N},\lambda;q) = \frac{ \delta_{\xi,\xi'}}{W_\mathsf{R}(\xi;\vec{N},\rho;q)},\\
			\sum_{\xi\in X} R^v(\zeta,\xi) R^{v^{-1}}(\zeta',\xi) W_{\mathsf R}(\xi;\vec{N},\rho;q) = \frac{ \delta_{\zeta,\zeta'}}{W_\mathsf{L}(\zeta;\vec{N},\lambda;q)}.
		\end{split}
		\]
\end{theorem}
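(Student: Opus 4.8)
The plan is to obtain both relations directly from the definition \eqref{eq:sumKrawtchouk} of $R^v$ as a $w$-weighted scalar product of $K_{\mathsf L}$ and $K_{\mathsf R}$, combined with the orthogonality relations already proved in Theorem~\ref{Thm:reversiblemeasures}(iii) and Corollary~\ref{Cor:reversiblemeasures}(ii). No new $q$-special-function identities are required: everything reduces to interchanging finite sums over the (finite) state space $X$ and applying the known orthogonalities twice. Conceptually, if one regards the duality functions as matrices indexed by $X$, then \eqref{eq:sumKrawtchouk} reads $R^v = K_{\mathsf L}^{\,t} D_v K_{\mathsf R}$ with $D_v$ the diagonal matrix having entries $w(\eta)v^{|\eta|}$, and the assertion is that the ``middle'' factors of $R^v$ and $R^{v^{-1}}$ telescope.

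For the first relation I would substitute both $R^v(\zeta,\xi)$ and $R^{v^{-1}}(\zeta,\xi')$ by their defining sums over dummy variables $\eta,\eta'\in X$ and interchange summations so that the sum over $\zeta$ is carried out first. This produces the inner sum
\[
\sum_{\zeta\in X} K_{\mathsf L}(\eta,\zeta)\,K_{\mathsf L}(\eta',\zeta)\,W_{\mathsf L}(\zeta) = \frac{\de_{\eta,\eta'}}{w(\eta)},
\]
which is exactly the second orthogonality relation of Corollary~\ref{Cor:reversiblemeasures}(ii). The Kronecker delta collapses the double sum to a single sum over $\eta=\eta'$, and the crucial bookkeeping point is that the free-parameter factor $v^{|\eta|}v^{-|\eta'|}$ then becomes $v^{|\eta|-|\eta|}=1$, so all $v$-dependence disappears. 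What remains is $\sum_{\eta\in X} K_{\mathsf R}(\eta,\xi)K_{\mathsf R}(\eta,\xi')\,w(\eta)$, which by the first orthogonality relation of Theorem~\ref{Thm:reversiblemeasures}(iii) equals $\de_{\xi,\xi'}/W_{\mathsf R}(\xi)$, as claimed.

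The second relation follows by the symmetric computation: summing over $\xi$ first now triggers the second orthogonality relation of Theorem~\ref{Thm:reversiblemeasures}(iii), again forcing $\eta=\eta'$ and cancelling the free parameter, after which the first orthogonality relation of Corollary~\ref{Cor:reversiblemeasures}(ii) yields $\de_{\zeta,\zeta'}/W_{\mathsf L}(\zeta)$. Alternatively, once the first relation is established, the second is automatic from the finite-dimensional linear-algebra fact that $B^{t}A=I$ implies $AB^{t}=I$ for square matrices (the same device used in Theorem~\ref{Thm:reversiblemeasures}(iii)), applied to the weight-normalized matrices with entries $\sqrt{W_{\mathsf L}(\zeta)}\,R^{v^{\pm1}}(\zeta,\xi)\,\sqrt{W_{\mathsf R}(\xi)}$. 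There is no serious obstacle in this argument; the only thing that must be got right is that $R^v$ is paired with $R^{v^{-1}}$ rather than with itself, since it is precisely this choice that makes the free parameter $v$ cancel once the middle orthogonality has forced $\eta=\eta'$.
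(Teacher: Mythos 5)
Your proposal is correct and takes essentially the same route as the paper: there the computation is packaged through the unitary operator $\Lambda_{\mathsf R}f=\langle f,K_{\mathsf R}(\cdot,\xi)\rangle_H$ (whose unitarity is exactly the $K_{\mathsf R}$-orthogonality of Theorem \ref{Thm:reversiblemeasures}(iii)), applied to $K_{\mathsf L}^{v^{\pm1}}$ and finished with the $K_{\mathsf L}$-orthogonality of Corollary \ref{Cor:reversiblemeasures}(ii), with the cancellation $v^{|\eta|}v^{-|\eta|}=1$ after the middle delta collapses playing precisely the role you single out. Your explicit interchange of finite sums is just the unpacked form of that unitarity step, and your fallback via $B^{t}A=I\Rightarrow AB^{t}=I$ mirrors the paper's own remark in the proof of Theorem \ref{Thm:reversiblemeasures}(iii).
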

The proof can again be found in Section \ref{subsec:dualityLASEPRASEP}.
\begin{remark}\label{rem:duality racah} \*
	\begin{itemize}
		\item The duality function $R$ satisfies the symmetry relation
		\[
		R^v(\zeta,\xi;\lambda,\rho,\vec{N};q)= R^v(\xi^\rev,\zeta^\rev;\rho,\lambda,\vec{N}^\rev;q^{-1}),
		\]
		which corresponds to the fact that $\LASEP{q,\vec{N},\lambda}$ is obtained from $\RASEP{q,\vec{N},\rho}$ by reversing the order of the sites and interchanging $\rho\leftrightarrow\lambda$.
		\item Since both $\genLASEP$ and $\genRASEP$ are invariant under $q\to q^{-1}$, one expects that the duality function between these two processes is invariant under this transformation as well. A straightforward calculation shows that, up to a factor depending on the total number of particles $|\zeta|$ and $|\xi|$, the duality function $R^v(\zeta,\xi)$ is invariant under sending $(q,v)\to (q^{-1},v^{-1})$.
		\item Some of the factors in the $1$-site duality function $r$ can be taken out or changed and we still obtain a duality function. Let us define $R':X \times X \to \R$ by
		\[
		R'(\zeta,\xi) = \prod_{k=1}^M r'(\zeta_k,\xi_k;h^{-}_{k-1}(\zeta),h^{+}_{k+1}(\xi),v,N_k;q)
		\]
		with
		\[
			\hspace{1cm} r'(x,y;\lambda,\rho,v,N;q)=  v^{y}\frac{(-vq^{\rho+\lambda-N+1};q^2)_{x} (-v^{-1}q^{\rho+\lambda-N+1};q^2)_{y}}{q^{y(y+\rho+\lambda-N)}} R_{y}(x;\alpha,\beta,\gamma,\delta;q^2),	
		\]
		with $\alpha,\beta,\gamma,\delta,v$ as in the definition \eqref{eq:1-site duality r} of $r$. Then $R'$ is also a duality function between $\LASEP{q,\vec{N},\lambda}$ and $\RASEP{q,\vec{N},\rho}$. Indeed, $R(\zeta,\xi) = c(\zeta,\xi) R'(\zeta,\xi)$ where 
		\begin{equation} \label{eq:invariant c}
			\begin{split}
				c(\zeta,\xi) &= \prod_{k=1}^M \frac{ (vq^{2\zeta_k-h^{+}_{k+1}(\xi)+h^{-}_{k-1}(\zeta)-N_k+1};q^2)_{N_k} }{ (vq^{-2\xi_k-h^{+}_{k+1}(\xi)+h^{-}_{k-1}(\zeta)+N_k+1};q^2)_{\xi_k+\zeta_k} } =  \frac{ (vq^{\lambda-\rho+2|\zeta|-|\vec{N}|+1};q^2)_{|\vec{N}|-|\zeta|} }{(vq^{\lambda-\rho-2|\xi|+|\vec{N}|+1};q^2)_{|\xi|}}
			\end{split}
		\end{equation}
		is a function depending only on the total number of particles and dual particles. Now the claim follows from Remark \ref{rem:InvTotPart}. For notational convenience later on, we define
		\begin{align} \label{eq:invariant c def}
			c^v(|\zeta|,|\xi|;\lambda,\rho):= c(\zeta,\xi).
		\end{align}
		Similarly, we have 
		\begin{align}
			C^v(|\zeta|,|\xi|;\lambda,\rho)=\prod_{k=1}^M \frac{(-vq^{h^{+}_{k+1}(\xi)+h^{-}_{k-1}(\zeta)-N_k+1};q^2)_{\xi_k}}{ (-vq^{h^{+}_{k+1}(\xi)+h^{-}_{k-1}(\zeta)-N_k+1};q^2)_{\zeta_k}} = 	\frac{(-vq^{\lambda+\rho-|\vec{N}|+1};q^2)_{|\xi|}}{(-vq^{\lambda+\rho-|\vec{N}|+1};q^2)_{|\zeta|}}.\label{eq:identity C^0}
		\end{align}	
		So that in the $1$-site duality function $r$ (and $r'$) the $x$ in the factor $(-vq^{\rho+\lambda-N+1};q^2)_{x}$ can be replaced by $y$ and we still have a duality function. Similarly, in $(-v^{-1}q^{\rho+\lambda-N+1};q^2)_{y}$ the $y$ can be replaced by $x$. This will be important when taking limits in the duality function $r$ in Section \ref{sec:Degenerations}. See Appendix \ref{sec:appendix Identities} for the simplification of $c^v$ and $C^v$.

	\end{itemize}
\end{remark}

\section{Asymmetric degenerations }\label{sec:Degenerations}
The duality between $\LASEP{q,\vec{N},\lambda}\leftrightarrow \RASEP{q,\vec{N},\rho}$ sits on top of a hierarchy of several other dualities. By taking appropriate limits in the duality equation
\begin{equation} \label{eq:duality equation dynamic ASEP}
	[L_{q,\vec{N},\lambda}^{\mathsf L} R^v(\, \cdot\, ,\xi)](\zeta) = [L_{q,\vec{N},\rho}^{\mathsf R} R^v(\zeta,\, \cdot\, )](\xi)
\end{equation}
from \eqref{eq:dualityLASEPRASEP}, we obtain several other duality relations. This boils down to taking appropriate limits of the process generators, duality functions, and orthogonality relations. That is, we will look at the limits of the form
\begin{itemize}
	\item $f(|\zeta|,|\xi|) R^v(\zeta,\xi)$, which are also duality functions by Remark \ref{rem:InvTotPart}. 
	\item $g(|\zeta|)W_\mathsf{L}(\zeta)$ and $g'(|\xi|)W_\mathsf{R}(\xi)$, which are still reversible measures (again by Remark \ref{rem:InvTotPart}).
\end{itemize}  
In all cases, the functions $f,g$, and $g'$ are conveniently chosen such that the limits are convergent. The explicit calculations of the limits, which are mostly straightforward computations using the $q$-hypergeometric expressions of the duality functions, are postponed to Section \ref{sec:calculations of limits}. In this section, we only look at asymmetric degenerations, by which we mean that we do not take limits in the parameter $q$ yet. This will be done in Section \ref{sec:symmetric degenerations}. Throughout this section, we assume $q \in (0,1)$.

\subsection{Dualities for the asymmetric exclusion process}
Recall from Section \ref{sec:GeneralizedDynASEP} the limits from dynamic ASEP to ASEP,
\[
\lim_{\rho \to \pm \infty} L^{\mathsf R}_{q,\vec{N},\rho} = L_{q^{\mp 1},\vec{N}} \qquad \text{and} \qquad \lim_{\lambda \to \pm \infty} L^{\mathsf L}_{q,\vec{N},\lambda} = L_{q^{\pm 1},\vec{N}}.
\]
This gives us three `different\footnote{Different in the sense that they cannot be obtained from one another by sending $q\to q^{-1}$ or reversing the order of sites.}' limit cases  of the duality between ASEP$_\mathsf{L}\leftrightarrow$ASEP$_\mathsf{R}$, which are listed in Table \ref{tab:limits_duality_dynamic_ASEP}.
\begin{table}[h] 
	\begin{tabular}{|l|ll|l|}
		\hline
		\ &\multicolumn{2}{|l|}{\hspace{0.85cm} Duality between} & Corresponding limit \\ \hline
		(i)&\multicolumn{1}{|l|}{$\ASEP{q,\vec{N}}$}       &  $\RASEP{q,\vec{N},\rho}$      &       $\lambda\to\infty$              \\ \hline
		(ii)&\multicolumn{1}{|l|}{$\ASEP{q,\vec{N}}$ }      &  $\ASEP{q,\vec{N}}$       &  $\lambda\to\infty$, $\rho\to-\infty$                   \\ \hline
		(iii)&\multicolumn{1}{|l|}{$\ASEP{q,\vec{N}}$}       &  $\ASEP{q^{-1},\vec{N}}$       &     $\lambda,\rho\to\infty$                \\ \hline
	\end{tabular}
	\vspace{0.1cm}
	\caption{Three limits of the duality between $\LASEP{q,\vec{N},\lambda}$ and $\RASEP{q,\vec{N},\rho}$.} \label{tab:limits_duality_dynamic_ASEP}\vspace{-0.7cm}
\end{table}\\
For the next proposition, we need the following orthogonal polynomials, which are special cases of $q$-Racah polynomials.
\begin{enumerate}[label=(\roman*)]
	\item The $q$-Hahn polynomials \cite[\S14.6 and \S14.7]{KLS} are defined by 
	\[
	P_n(x;\alpha,\beta,N;q) = R_n(x;\alpha,\beta, q^{-N-1},0;q) = \rphis{3}{2}{q^{-n},\alpha\beta q^{n+1},q^{-x}}{\alpha q, q^{-N}}{q,q}.
	\]
	We define the $1$-site duality function
	\begin{align}\label{eq:1sitedualityHahn}
		p(n,x;\lambda,\rho,v,N;q) =  c_{\mathrm p}(n,x;\lambda,\rho,v,N;q) P_x(n;\alpha,\beta,N;q^2),
	\end{align}
	where 
	\[
		(\alpha,\beta)=(-vq^{\rho+\lambda-N-1}, v^{-1}q^{\rho-\lambda-N-1}),
	\]
	and the coefficient $c_{\mathrm p}$ can be found in Appendix \ref{app:overview}.
	\item The quantum $q$-Krawtchouk polynomials \cite[\S14.14]{KLS} are defined by 
	\[
	K^{\text{qtm}}_n(x;\hat{p},N;q)= \rphis{2}{1}{q^{-n}, q^{-x}}{q^{-N}}{q, \hat{p}q^{n+1}}.
	\]
	We define the $1$-site duality function
	\begin{align}\label{eq:1sitedualityQuantum}
	k^{\mathrm{qtm}}(n,x;\lambda,\rho,v,N;q)=K_{x}^{\mathrm{qtm}}(n; \hat{p},N;q^2),
	\end{align}
	where 
	\[
		\hat{p} = v^{-1} q^{\rho-\lambda-N-1}.
	\] 
	\item The affine $q$-Krawtchouk polynomials \cite[\S14.16]{KLS} are defined by 
	\[
	K^{\text{aff}}_n(x;p',N;q) = \rphis{3}{2}{q^{-n},0,q^{-x}}{p'q,q^{-N}}{q,q}.
	\]
	These are related to the quantum $q$-Krawtchouk polynomials by
	\[
	K_n^{\text{aff}}(x;p',N;q^{-1})= \frac{1}{(q/p';q)_n}K_n^{\text{qtm}}(N-x;p'^{-1},N;q).
	\]
	We define the $1$-site duality function
	\begin{align}\label{eq:1sitedualityAffine}
	k^\mathrm{aff}(n,x;\lambda,\rho,v,N;q)= c_{\mathrm k}^\mathrm{aff}(x;\lambda,\rho,v,N;q)K_{x}^{\mathrm{aff}}(n;p',N;q^2),
	\end{align}
	where $p' = v q^{\rho+\lambda-N-1}$ and the coefficient $c_{\mathrm k}^\mathrm{aff}$ can be found in Appendix \ref{app:overview}.
\end{enumerate}
\ \\
As we take appropriate limits in the duality relation \eqref{eq:duality equation dynamic ASEP}, we obtain the following duality functions which correspond to the dualities given in Table \ref{tab:limits_duality_dynamic_ASEP}.
\pagebreak
\begin{proposition}\* \label{prop:degenerate duality}
	\begin{enumerate}[label=(\roman*)]
		\item Define the function $P^v_{\mathsf R}:X \times X \to \R$ by
		\[
			P^v_{\mathsf R}(\eta,\xi) = \lim_{\lambda \to \infty} q^{2\lambda|\eta|} R^{vq^{-\lambda}}(\eta,\xi).
		\]
		Then $P^v_{\mathsf R}$ is a duality function between $\ASEP{q,\vec{N}}\leftrightarrow\RASEP{q,\vec{N},\rho}$ and
		\[
			P^v_{\mathsf R}(\eta,\xi) =\prod_{k=1}^M p(\eta_k,\xi_k;h^{-}_{k-1,0}(\eta),h^{+}_{k+1}(\xi),v,N_k;q).
		\]
		\item Define the function $K^v_\mathrm{qtm}:X \times X \to \R$ by 
		\[
			K^v_\mathrm{qtm}(\eta,\xi) = 	\lim_{\rho \to -\infty} \frac{ (v^{-2}q^{-2\rho})^{|\eta|} }{c^{v}(|\eta|,|\xi|;0,0)C^v(|\eta|,|\xi|;0,2\rho)} P^{vq^\rho}_{\mathsf R}(\eta,\xi),
		\]
		where $c^v$ can be found in \eqref{eq:invariant c def}.
		Then $K^v_\mathrm{qtm}$ is a self-duality function for $\ASEP{q,\vec{N}}$ and
		\[ 
			K^v_\mathrm{qtm}(\eta,\xi)	= \prod_{k=1}^M k^\mathrm{qtm}(\eta_k,\xi_k;h^{-}_{k-1,0}(\eta),h^{+}_{k+1,0}(\xi),v,N_k;q).
		\]
		\item Define the function $K^v_\mathrm{aff}:X \times X \to \R$ by 
		\[
			K^v_\mathrm{aff}(\eta,\xi) = \lim_{\rho \to \infty} \frac{ q^{2\rho|\eta|}  }{c^{-v}(|\eta|,|\xi|;0,2\rho)} P^{-v q^{-\rho}}_{\mathsf R}(\eta,\xi).
		\]
		Then $K^v_\mathrm{aff}$ is a duality function between $\ASEP{q,\vec{N}}\leftrightarrow \ASEP{q^{-1},\vec{N}}$ and
		\[
			K^v_\mathrm{aff}(\eta,\xi) =  \prod_{k=1}^M k^\mathrm{aff}(\eta_k,\xi_k;h^{-}_{k-1,0}(\eta),h^{+}_{k+1,0}(\xi),v,N_k;q).
		\]
	\end{enumerate}
\end{proposition}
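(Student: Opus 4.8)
The plan is to obtain all three dualities from the master duality equation \eqref{eq:duality equation dynamic ASEP} by passing to the relevant limit, exploiting that the state space $X$ is finite: convergence of the generators (finite matrices) together with pointwise convergence of the duality functions immediately yields the limiting duality relation. For part (i), I would first invoke the generator limit $\lim_{\lambda\to\infty} L^{\mathsf L}_{q,\vec{N},\lambda} = L_{q,\vec{N}}$ recalled at the start of this section, while $L^{\mathsf R}_{q,\vec{N},\rho}$ is held fixed. For each fixed $\lambda$, the function $q^{2\lambda|\eta|}R^{vq^{-\lambda}}(\eta,\xi)$ is again a duality function between $\LASEP{q,\vec{N},\lambda}$ and $\RASEP{q,\vec{N},\rho}$: replacing the free parameter $v$ by $vq^{-\lambda}$ is harmless, and multiplying by $q^{2\lambda|\eta|}$, a function of the total particle number only, preserves duality by Remark \ref{rem:InvTotPart}. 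Letting $\lambda\to\infty$ in \eqref{eq:duality equation dynamic ASEP} then gives a duality between $\ASEP{q,\vec{N}}$ and $\RASEP{q,\vec{N},\rho}$, provided the limit defining $P^v_{\mathsf R}$ exists.

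The explicit evaluation reduces to a single site, since $R^v$, the claimed answer, and the prefactor $q^{2\lambda|\eta|}=\prod_k q^{2\lambda\eta_k}$ all factor over $k$. Thus it suffices to compute the $1$-site limit of $q^{2\lambda y}\, r(y,x;h^{-}_{k-1,\lambda},h^{+}_{k+1},vq^{-\lambda},N;q)$ for the function $r$ of \eqref{eq:1-site duality r}. Here $\lambda$ enters both through the replacement $v\mapsto vq^{-\lambda}$ and through the height argument $h^{-}_{k-1,\lambda}=\lambda+h^{-}_{k-1,0}$. Writing out the $q$-Racah parameters $(\alpha,\beta,\gamma,\delta)$, one finds that these two sources of $\lambda$-dependence conspire so that $\alpha\beta$ and $\beta\delta$ converge to finite limits while $\gamma\delta\,q^{2(x+1)}\to 0$ and $\alpha q^2\to 0$; since the series \eqref{eq:racah4phi3} terminates, the limit may be taken term by term and the ${}_4\varphi_3$ collapses to the ${}_3\varphi_2$ defining the $q$-Hahn polynomial. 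Matching the surviving scalar prefactor against $c_{\mathrm p}$ then identifies the limit with $\prod_k p(\eta_k,\xi_k;h^{-}_{k-1,0}(\eta),h^{+}_{k+1}(\xi),v,N_k;q)$ from \eqref{eq:1sitedualityHahn}.

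For parts (ii) and (iii) I would iterate the same scheme starting from $P^v_{\mathsf R}$, now sending $\rho\to-\infty$ (resp.\ $\rho\to\infty$), where $L^{\mathsf R}_{q,\vec{N},\rho}\to L_{q,\vec{N}}$ (resp.\ $\to L_{q^{-1},\vec{N}}$) by the stated generator limits, so that the limiting dualities are self-duality of $\ASEP{q,\vec{N}}$ and duality of $\ASEP{q,\vec{N}}$ with $\ASEP{q^{-1},\vec{N}}$. The normalizing factors $(v^{-2}q^{-2\rho})^{|\eta|}/\big(c^v C^v\big)$ and $q^{2\rho|\eta|}/c^{-v}$ depend only on the total particle numbers $|\eta|,|\xi|$ and parameters --- recall from \eqref{eq:invariant c def}, \eqref{eq:identity C^0} and Remark \ref{rem:duality racah} that $c^v$ and $C^v$ are precisely such functions --- so they preserve duality by Remark \ref{rem:InvTotPart}, and at the $1$-site level they again factor. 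One then computes the $1$-site $q$-Hahn limits: $\rho\to-\infty$ sends the $q$-Hahn ${}_3\varphi_2$ to the quantum $q$-Krawtchouk ${}_2\varphi_1$ of \eqref{eq:1sitedualityQuantum}, and $\rho\to\infty$ sends it to the affine $q$-Krawtchouk ${}_3\varphi_2$ of \eqref{eq:1sitedualityAffine}; both are standard transitions in the $q$-Askey scheme.

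The hard part will not be convergence --- every series in sight terminates because of a factor $q^{-2n}$ or $q^{-2x}$, so all limits are finite sums taken term by term --- but the bookkeeping of the coefficient functions. One must check that the engineered prefactors $q^{2\lambda|\eta|}$, $(v^{-2}q^{-2\rho})^{|\eta|}/\big(c^vC^v\big)$, and $q^{2\rho|\eta|}/c^{-v}$ exactly cancel the diverging (or vanishing) parts of $c_{\mathrm r}$ so that a finite, nonzero limit survives, and that what remains matches $c_{\mathrm p}$, the trivial normalization in \eqref{eq:1sitedualityQuantum}, and $c_{\mathrm k}^\mathrm{aff}$ respectively. This amounts to the explicit $q$-shifted-factorial manipulations deferred to Section \ref{sec:calculations of limits}, where essentially all the computational effort lies.
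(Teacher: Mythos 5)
Your proposal is correct and takes essentially the same route as the paper: it transfers the limits through the duality equation \eqref{eq:duality equation dynamic ASEP} using the stated generator limits and Remark \ref{rem:InvTotPart} for the total-particle-number prefactors, and the factorized $1$-site limits you describe --- the $\rphisempty{4}{3}$ of \eqref{eq:r=4phi3} collapsing to the $q$-Hahn $\rphisempty{3}{2}$ (your analysis of which $q$-Racah parameters survive as $\lambda\to\infty$ is accurate, including the interplay between $v\mapsto vq^{-\lambda}$ and the height argument), then degenerating to the quantum and affine $q$-Krawtchouk functions as $\rho\to\mp\infty$ --- are exactly the computations the paper carries out in Section \ref{sec:calculations of limits}. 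The coefficient bookkeeping you defer (matching the surviving prefactors against $c_{\mathrm p}$, the trivial normalization, and $c_{\mathrm k}^{\mathrm{aff}}$, and cancelling divergences via $c^v$ and $C^v$) is precisely where the paper's own proof spends its effort, so nothing essential is missing.
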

\begin{remark}
	For completeness, the duality relations of Proposition \ref{prop:degenerate duality} are given by
	\begin{enumerate}[label=(\roman*)]
		\item 	$\displaystyle[L_{q,\vec{N}} P^v_{\mathsf R}(\,\cdot\,,\xi)](\eta) = [L_{q,\vec{N},\rho}^{\mathsf{R}} P^v_{\mathsf R}(\eta,\,\cdot\,)](\xi)$,
		\item $\displaystyle [L_{q,\vec{N}}K^v_\mathrm{qtm}(\,\cdot\,,\xi)](\eta) = [L_{q,\vec{N}}K^v_\mathrm{qtm}(\eta,\,\cdot\,)](\xi) $,
		\item $\displaystyle [L_{q,\vec{N}}K^v_\mathrm{aff}(\,\cdot\,,\xi)](\eta) = [L_{q^{-1},\vec{N}}K^v_\mathrm{aff}(\eta,\,\cdot\,)](\xi) $.		
	\end{enumerate}
\end{remark}
\begin{remark} \*\label{rem:HahnDuality}
	Similar to the second item of Remark \ref{rem:duality racah}, the function
	\[
	P'_{\mathsf R}(\eta,\xi) = \prod_{k=1}^M p'(\eta_k,\xi_k;h^{-}_{k-1,0}(\eta),h^{+}_{k+1}(\xi),v,N_k;q)
	\]
	with 
	\[
	\begin{split}
		p'(x,y;\lambda,\rho,v,N;q) &= v^x q^{-x(x+\rho+\lambda-N)}(-vq^{\rho+\lambda-N+1};q^2)_x \\ & \quad \times P_y(x;-vq^{\rho+\lambda-N-1}, v^{-1}q^{\rho-\lambda-N-1},N;q^2)
	\end{split}
	\]
	is again a duality function between $\ASEP{q,\vec{N}}$ and $\RASEP{q,\vec{N},\rho}$.
	For $N_1=\ldots=N_M=1$ the $1$-site duality function $p'(\eta_k,\xi_k)$ equals $1$ for $\eta_k=0$ and for $\eta_k=1$ we obtain
	\[
	\begin{split}
		&p'(1,\xi_k;h^{-}_{k-1,0}(\eta),h^{+}_{k+1}(\xi),v,1;q) =\\ 
		&vq^{-h^{+}_{k+1}(\xi)-h^{-}_{k-1,0}(\eta)}
		\left( 1+vq^{h^{+}_{k+1}(\xi)+h^{-}_{k-1,0}(\eta)} + \frac{q^2}{1-q^2}(1-q^{-2\xi_k})(1+q^{2h^{+}_{k+1}(\xi)+2\xi_k-2})\right).
	\end{split}
	\]
	This function is different from the duality function $Z_{n;q,\alpha}(\vec{x},\vec{s})$ from equation (2.1) in \cite{BoCo}. It would be interesting to know how they relate. Unfortunately, we didn't succeed in finding this connection.
\end{remark}
The orthogonality relations from Theorem \ref{Thm:orthogonaldualityLASEPRASEP} are still valid after taking the limits in the previous proposition. Hence, the duality functions $P^v_\mathsf{R}$, $K_\mathrm{qtm}^v$, and $K_\mathrm{aff}^v$ are still orthogonal with respect to the reversible measures $w$ and/or $W_\mathsf{R}$ (multiplied by some factor depending on the total number of particles).
\begin{proposition}\label{prop:degenerate orthogonality}\* 
	\begin{enumerate}[label=(\roman*)]
		\item The function $P^v_\mathsf{R}$ is an orthogonal duality function between $\ASEP{q,\vec{N}}\leftrightarrow\RASEP{q,\vec{N},\rho}$ and
		\[
		\begin{split}
			\sum_{\eta \in X} P_{\mathsf R}^v(\eta,\xi) P^v_{\mathsf R}(\eta,\xi')\, \om^{\mathrm p}(|\eta|) w(\eta;\vec{N};q) = \frac{ \delta_{\xi,\xi'}}{\om^{\mathrm p}_\mathsf{R}(|\xi|)W_\mathsf{R}(\xi;\vec{N},\rho;q)},\\
			\sum_{\xi\in X} P^v_{\mathsf R}(\eta,\xi) P^v_{\mathsf R}(\eta',\xi) \,\om^{\mathrm p}_\mathsf{R}(|\xi|)W_{\mathsf R}(\xi;\vec{N},\rho;q) = \frac{ \delta_{\eta,\eta'}}{\om^{\mathrm p}(|\eta|)w(\eta;\vec{N};q) }.
		\end{split}
		\]
		\item The function $K^v_\mathrm{qtm}$ is an orthogonal self-duality function for $\ASEP{q,\vec{N}}$ and
		\[
		\begin{split}
			\sum_{\eta\in X} K^v_\mathrm{qtm}(\eta,\xi) K_\mathrm{qtm}^v(\eta,\xi') \, \om^{\mathrm{qtm}}(|\eta|) w(\eta;\vec{N};q) = \frac{\de_{\xi,\xi'}}{\om_\mathsf{R}^\mathrm{qtm}(|\xi|) w(\xi;\vec{N};q)},\\
			\sum_{\xi\in X} K^v_\mathrm{qtm}(\eta,\xi) K^v_\mathrm{qtm}(\eta',\xi) \, \om_\mathsf{R}^\mathrm{qtm}(|\xi|) w(\xi;\vec{N};q) = \frac{\de_{\xi,\xi'}}{\om^\mathrm{qtm}(|\eta|) w(\eta;\vec{N};q)}.
		\end{split}
		\]
		\item The function $K^v_\mathrm{aff}$ is an orthogonal duality function between $\ASEP{q,\vec{N}}\leftrightarrow\ASEP{q^{-1},\vec{N}}$ and
		\[
		\begin{split}
			\sum_{\eta \in X}K^v_\mathrm{aff}(\eta,\xi) K^v_\mathrm{aff}(\eta,\xi')\, \om^\mathrm{aff}(|\eta|) w(\eta;\vec{N};q) = \frac{\de_{\xi,\xi'}}{ \om_\mathsf{R}^\mathrm{aff}(|\xi|) w(\xi;\vec{N};q^{-1})},\\
			\sum_{\xi \in X}K^v_\mathrm{aff}(\eta,\xi) K^v_\mathrm{aff}(\eta',\xi)\, \om_\mathsf{R}^\mathrm{aff}(|\xi|) w(\xi;\vec{N};q^{-1})  = \frac{\de_{\eta,\eta'}}{ \om^\mathrm{aff}(|\eta|) w(\eta;\vec{N};q)}.
		\end{split}
		\]
	\end{enumerate}
	The coefficients $\om$ in front of the reversible measures $w$ and $W_\mathsf{R}$ can be found in Appendix \ref{app:overview}.
\end{proposition}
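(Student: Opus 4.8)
The plan is to obtain all three pairs of orthogonality relations by taking, in the orthogonality relations for $R^v$ from Theorem \ref{Thm:orthogonaldualityLASEPRASEP}, exactly the same limits that produce the degenerate duality functions in Proposition \ref{prop:degenerate duality}. Since the state space $X$ is finite, every sum involved is finite, so the limits may be interchanged with the summation; the only real work is to keep track of the scalar prefactors depending on the total particle numbers $|\eta|,|\xi|$ and to collect them into the coefficients $\om$.

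Concretely, for part (i) I would start from the first relation in Theorem \ref{Thm:orthogonaldualityLASEPRASEP}, namely $\sum_{\zeta}R^v(\zeta,\xi)R^{v^{-1}}(\zeta,\xi')W_{\mathsf L}(\zeta;\vec N,\lambda;q)=\delta_{\xi,\xi'}/W_{\mathsf R}(\xi;\vec N,\rho;q)$, and replace $v$ by $vq^{-\lambda}$ (so that $v^{-1}$ becomes $v^{-1}q^{\lambda}$), which leaves the identity valid for every $\lambda$. Multiplying by a suitable power of $q^{\lambda}$ and letting $\lambda\to\infty$, the factor $q^{2\lambda|\zeta|}R^{vq^{-\lambda}}(\zeta,\xi)$ converges to $P^v_{\mathsf R}(\zeta,\xi)$ by the definition in Proposition \ref{prop:degenerate duality}(i); the companion factor $R^{v^{-1}q^{\lambda}}(\zeta,\xi')$, after the matching scaling, converges to $P^v_{\mathsf R}(\zeta,\xi')$ as well, up to a factor depending only on $|\zeta|$ and $|\xi'|$ (the $v\leftrightarrow v^{-1}$ discrepancy collapses in the limit exactly as in Remark \ref{rem:duality racah}); and on the measure side $q^{-2\lambda|\zeta|}W_{\mathsf L}(\zeta;\vec N,\lambda;q)\to q^{\ga_1(|\zeta|)}w(\zeta;\vec N;q)$ by \eqref{eq:la->pm infty W->w}. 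Gathering the leftover total-particle factors into $\om^{\mathrm p}$ and $\om^{\mathrm p}_{\mathsf R}$ — which is legitimate by Remark \ref{rem:InvTotPart} — yields the first relation of (i). Parts (ii) and (iii) follow by iterating the argument: in the $P^v_{\mathsf R}$ relation one takes the further limit $\rho\to-\infty$ (resp.\ $\rho\to\infty$), now using the degeneration of $W_{\mathsf R}$ to $w$ in \eqref{eq:rho->infty W->w} together with the explicit scalings and the $c^v,C^v$ factors appearing in the definitions of $K^v_{\mathrm{qtm}}$ and $K^v_{\mathrm{aff}}$.

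For the second relation in each pair I would avoid re-running the limit and instead invoke the finite-dimensional linear-algebra argument already used for Theorem \ref{Thm:reversiblemeasures}(iii): writing the first relation as $AA^{t}=I$ for the square matrix $A$ whose entries are $P^v_{\mathsf R}(\eta,\xi)$ (resp.\ $K^v_{\mathrm{qtm}}$, $K^v_{\mathrm{aff}}$) symmetrically weighted by the two reversible measures and the $\om$-factors, one gets $A^{t}A=I$ for free because $A$ is a square matrix indexed by the finite set $X$.

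I expect the main obstacle to be purely the bookkeeping of prefactors rather than any convergence issue. The delicate point is that Theorem \ref{Thm:orthogonaldualityLASEPRASEP} pairs $R^v$ with $R^{v^{-1}}$, whereas each degenerate relation pairs a single function with itself; reconciling this — showing that both $R^{vq^{-\lambda}}$ and $R^{v^{-1}q^{\lambda}}$ limit to the same $P^v_{\mathsf R}$ up to harmless total-particle factors, and then determining the precise exponents in $\om^{\mathrm p},\om^{\mathrm{qtm}},\om^{\mathrm{aff}}$ and their $\mathsf R$-counterparts — is exactly the explicit computation carried out in Section \ref{sec:calculations of limits}. Convergence itself is never in question, because $X$ is finite and every limit in Proposition \ref{prop:degenerate duality} and in \eqref{eq:rho->infty W->w}--\eqref{eq:la->pm infty W->w} is already known to exist.
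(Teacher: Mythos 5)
Your proposal is correct and is essentially the paper's own proof: Section \ref{sec:calculations of limits} likewise substitutes $v\mapsto vq^{-\lambda}$ in the orthogonality relations of Theorem \ref{Thm:orthogonaldualityLASEPRASEP}, uses the computed limit $\lim_{\lambda\to\infty}R^{v^{-1}q^{\lambda}}(\zeta,\xi)=v^{-2|\zeta|}\bigl(C^v/c^v\bigr)P^v_{\mathsf R}(\zeta,\xi)$ to reconcile the $v\leftrightarrow v^{-1}$ pairing, invokes the measure limits \eqref{eq:rho->infty W->w}--\eqref{eq:la->pm infty W->w}, absorbs the total-particle prefactors into the $\om$'s via Remark \ref{rem:InvTotPart}, and then iterates with $\rho\to\mp\infty$ for parts (ii) and (iii). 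Your transpose shortcut ($AA^{t}=I\Rightarrow A^{t}A=I$ on the finite set $X$) for the second relation of each pair is a harmless variant that the paper itself endorses in the proof of Theorem \ref{Thm:reversiblemeasures}(iii).
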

\begin{remark} 
	The duality function $K^v_\mathrm{qtm}$ from Proposition \ref{prop:degenerate duality}(ii) and its orthogonality relations in \ref{prop:degenerate orthogonality}(ii) were first found in \cite{CFG}. In the present paper we have an alternative proof for \cite[Theorem 3.2]{CFG} by exploiting that it is a degenerate version of the orthogonal duality between $\LASEP{q,\vec{N},\lambda}\leftrightarrow\RASEP{q,\vec{N},\rho}$.
\end{remark}
The product for the duality function $P^v_{\mathsf R}$ in Proposition \ref{prop:degenerate duality}(i) has a `doubly nested' structure, similar to the product structure of the duality function $R^v$ in Theorem \ref{Thm:KrawtchoukRacah}. This is different from the simpler nested structure of the duality function $K_{\mathsf R}$ defined by \eqref{eq:duality function K}, which is a duality function between the same two processes as $P^v_{\mathsf R}$. The simpler function $K_{\mathsf R}$ can be recovered from $P^v_{\mathsf R}$ by taking an appropriate limit involving the free parameter $v$:
\[
\begin{split}
	\lim_{v\to 0} v^{-|\eta|} P^v_{\mathsf R}(\eta,\xi) &  = q^{-\sum_{k=1}^M\eta_k[\eta_k+h^{+}_{k+1}(\xi)+\sum_{j=1}^{k-1}(2\eta_j-N_j)-N_k]} \\
	& \quad \times \prod_{k=1}^M \rphis{3}{2}{q^{-2\eta_k}, q^{-2\xi_k}, -q^{2h^{+}_{k+1}(\xi) + 2\xi_k -2N_k} }{q^{-2N_k},0}{q^2,q^2} \\
	& = (-1)^{|\eta|} q^{-|\eta|(|\eta|-1)} K_{\mathsf R}(\eta,\xi).
\end{split}
\]
We can also obtain the triangular self duality functions for ASEP from \cite{CGRS}. In \cite[Remark 6.2]{CFG} they were obtained by taking the limit $v \to 0$ in the duality function $K^v_\mathrm{qtm}$. Here we show they can also be obtained as a $\rho \to -\infty$ limit, i.e.~the limit that removes the `dynamic part' of dynamic ASEP, of the duality function $K_{\mathsf R}$ given by \eqref{eq:duality function K}. Since there are no free parameters involved in this limit, this shows that $K_{\mathsf R}$ is in a sense the `triangular' duality function between ASEP and ASEP$_{\mathsf R}$. We remark that the orthogonality relations do not survive the limit. Furthermore, letting $\rho \to \infty$ also gives a triangular duality function between $\ASEP{q,\vec{N}}$ and $\ASEP{q^{-1},\vec{N}}$. 
\pagebreak
\begin{proposition} \label{prop:KR->triangular duality}\*
	\begin{enumerate}[label = (\roman*)]
		\item The function $D:X \times X \to \R$ given by
		\[
		\begin{split}
			D(\eta,\xi) &= \lim_{\rho \to -\infty} q^{-\rho|\eta|} K_{\mathsf R}(\eta,\xi) \\
			&=q^{-\half u(\eta;\vec{N})} \prod_{k=1}^L \frac{(q^{-2\xi_k};q^2)_{\eta_k} }{(q^{-2N_k};q^2)_{\eta_k} }q^{\eta_k(\eta_k+2\xi_k-\frac32 N_k +\frac12 + \sum_{j=k+1}^m 2\xi_j - N_j)} 1_{\eta_k\leq\xi_k}
		\end{split}
		\]
		is a self-duality function for $\ASEP{q,\vec{N}}$, i.e.~$[L_{q,\vec{N}}D(\,\cdot\,,\xi)](\eta) = [L_{q,\vec{N}}D(\eta, \,\cdot\,)](\xi)$.
		\item The function $D':X \times X \to \R$ given by
		\[
		\begin{split}
			D'(\eta,\xi) &= \lim_{\rho \to \infty} (-q^{\rho})^{|\eta|} K_{\mathsf R}(\eta,\xi) \\
			& =q^{-\half u(\eta;\vec{N})} \prod_{k=1}^L \frac{(q^{2\xi_k-2N_k};q^2)_{\eta_k} }{(q^{-2N_k};q^2)_{\eta_k} }q^{\eta_k(\eta_k-2\xi_k-\frac32 N_k +\frac12 - \sum_{j=k+1}^m 2\xi_j - N_j)} 1_{\eta_k\leq N_k-\xi_k}
		\end{split}
		\]
		is a duality function between $\ASEP{q,\vec{N}}$ and $\ASEP{q^{-1},\vec{N}}$, i.e.
		\[[L_{q,\vec{N}}D(\,\cdot\,,\xi)](\eta) = [L_{q^{-1},\vec{N}}D(\eta, \,\cdot\,)](\xi).\]
	\end{enumerate}
	
\end{proposition}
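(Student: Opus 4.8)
The plan is to obtain both $D$ and $D'$ by transferring the duality of Theorem \ref{Thm:dualityASEPRASEP} through the limits $\rho\to\mp\infty$, in which $\RASEP{q,\vec{N},\rho}$ degenerates to $\ASEP{q^{\mp1},\vec{N}}$ as recorded in Remark \ref{remark:special cases of ASEP_R}. The only genuinely new input is the explicit evaluation of the two limits defining $D$ and $D'$; once these limits are known to exist, the duality statements follow from a soft interchange-of-limits argument, which is legitimate because the state space $X$ is finite and both generators are finite sums.

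For part (i), fix $\rho$ and set $D_\rho(\eta,\xi) = q^{-\rho|\eta|}K_{\mathsf R}(\eta,\xi)$. Since $q^{-\rho|\eta|}$ depends on $\eta$ only through $|\eta|$, which is conserved by both generators, Remark \ref{rem:InvTotPart} guarantees that $D_\rho$ is again a duality function between $\ASEP{q,\vec{N}}$ and $\RASEP{q,\vec{N},\rho}$; equivalently, multiplying the identity of Theorem \ref{Thm:dualityASEPRASEP} by $q^{-\rho|\eta|}$ and pulling the factor inside the particle-conserving generators gives
\[
[L_{q,\vec{N}} D_\rho(\,\cdot\,,\xi)](\eta) = [L_{q,\vec{N},\rho}^{\mathsf R} D_\rho(\eta,\,\cdot\,)](\xi).
\]
Assuming the pointwise limit $D(\eta,\xi)=\lim_{\rho\to-\infty}D_\rho(\eta,\xi)$ exists, I would pass to the limit termwise in both finite sums: on the right the rates converge, $C^{\mathsf R,\pm}_k(\xi)\to c^{\pm}_k(\xi)$ by Remark \ref{remark:special cases of ASEP_R}, so that $L_{q,\vec{N},\rho}^{\mathsf R}\to L_{q,\vec{N}}$, while the function values converge to those of $D$. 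This yields $[L_{q,\vec{N}} D(\,\cdot\,,\xi)](\eta) = [L_{q,\vec{N}} D(\eta,\,\cdot\,)](\xi)$, i.e.\ the asserted self-duality of $\ASEP{q,\vec{N}}$.

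The substantive step, and the one I expect to be the main obstacle, is showing that the limit exists and equals the stated closed form. Writing $K_{\mathsf R}$ as $q^{-\half u(\eta;\vec{N})}$ times the nested product of the $1$-site functions $k(\eta_k,\xi_k;h^{+}_{k+1}(\xi);N_k;q) = c_{\mathrm k}\, K_{\eta_k}(\xi_k;q^{2h^{+}_{k+1}(\xi)},N_k;q^2)$, one must weigh the $\rho$-dependence of each ${}_3\varphi_2$ against that of the coefficient $c_{\mathrm k}$. As $\rho\to-\infty$ one has $q^{2\rho}\to\infty$, so the argument $-q^{2h^{+}_{k+1}+2\xi_k-2N_k}$ of the $q$-Krawtchouk polynomial blows up; the global factor $q^{-\rho|\eta|}$ together with the $\rho$-dependence of $c_{\mathrm k}$ is tuned precisely so that the top ($j=\eta_k$) term of each terminating series converges while all lower terms vanish. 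That surviving top term carries the factor $(q^{-2\xi_k};q^2)_{\eta_k}$, which is zero unless $\eta_k\le\xi_k$; this is the origin of the indicator $1_{\eta_k\le\xi_k}$ and of the triangular structure. These $q$-hypergeometric manipulations, being routine but lengthy, I would carry out in Section \ref{sec:calculations of limits}.

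Part (ii) is entirely parallel at the level of the duality transfer: one replaces the normalization by $(-q^{\rho})^{|\eta|}$ and lets $\rho\to\infty$, now using $L_{q,\vec{N},\rho}^{\mathsf R}\to L_{q^{-1},\vec{N}}$ from Remark \ref{remark:special cases of ASEP_R}, so that the same interchange-of-limits argument produces a duality between $\ASEP{q,\vec{N}}$ and $\ASEP{q^{-1},\vec{N}}$. The limit evaluation is of a different flavour, however: since $q^{2\rho}\to0$ the argument of the $q$-Krawtchouk polynomial tends to $0$ and each ${}_3\varphi_2$ converges to a ${}_2\varphi_1$, which can be summed in closed form by the $q$-Chu--Vandermonde identity to produce the factor $(q^{2\xi_k-2N_k};q^2)_{\eta_k}$. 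This vanishes precisely when $\eta_k>N_k-\xi_k$, giving the indicator $1_{\eta_k\le N_k-\xi_k}$ and reflecting the particle/hole symmetry between $\ASEP{q,\vec{N}}$ and $\ASEP{q^{-1},\vec{N}}$. Here too the computation is deferred to Section \ref{sec:calculations of limits}, and the existence and closed evaluation of the limit is the only nontrivial point, the duality itself being automatic.
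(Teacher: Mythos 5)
Your proposal is correct, and its overall architecture coincides with the paper's: normalize $K_{\mathsf R}$ by a factor depending only on $|\eta|$ (legitimate by Remark \ref{rem:InvTotPart}), pass to the limit $\rho\to\mp\infty$ in the duality relation of Theorem \ref{Thm:dualityASEPRASEP} using $L^{\mathsf R}_{q,\vec N,\rho}\to L_{q^{\mp1},\vec N}$ from Remark \ref{remark:special cases of ASEP_R}, and reduce everything to an explicit limit of the nested product of $q$-Krawtchouk polynomials; the interchange-of-limits step you spell out is left implicit in the paper (it is the general principle announced at the start of Section \ref{sec:Degenerations}), and your justification of it via finiteness of $X$ and of the sums is sound. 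For part (i) your computation is exactly the paper's: the prefactor $q^{-\rho|\eta|}$ combined with the $\rho$-dependence of $c_{\mathrm k}$ makes the top term $n=\eta_k$ of each terminating $\rphisempty{3}{2}$ survive while the lower terms vanish, producing $(q^{-2\xi_k};q^2)_{\eta_k}$ and hence the indicator $1_{\eta_k\le\xi_k}$. For part (ii) you take a genuinely different (and arguably more direct) route for the limit evaluation: you observe that the parameter $-q^{2h^+_{k+1}+2\xi_k-2N_k}$ tends to $0$, so each $\rphisempty{3}{2}$ degenerates termwise to a $\rphisempty{2}{1}$ which $q$-Chu--Vandermonde sums to $(q^{2\xi_k-2N_k};q^2)_{\eta_k}q^{-2\xi_k\eta_k}$ -- incidentally the same mechanism the paper uses for the $q$-TAZRP limit in Proposition \ref{prop:duality q-TAZRP}, and here the cancellation is exact since $\rho-h^+_{k+1}(\xi)=-h^+_{k+1,0}(\xi)$ is $\rho$-independent. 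The paper instead first applies the reflection identity \eqref{eq:KrawtchoukRhoto-Rho}, $k(n,x;\rho,N;q)=(-1)^nk(n,N-x;-\rho,N;q)$, which converts the $\rho\to+\infty$ limit into the same top-term-dominance asymptotics as in part (i); that trick buys re-use of the part (i) analysis and makes the particle--hole symmetry between $\ASEP{q,\vec N}$ and $\ASEP{q^{-1},\vec N}$ manifest, while your route avoids the symmetry identity altogether at the cost of invoking a summation formula. Both yield the same closed form, so the only remaining work in your write-up is the bookkeeping of the elementary $q$-powers, which you correctly defer to a computation of the type carried out in Section \ref{sec:calculations of limits}.
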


\subsection{Duality for the totally asymmetric zero range process}
Finally, we consider also briefly the limit of \eqref{eq:duality equation dynamic ASEP} where the number $N_k$ of allowed particles per site tends to $\infty$. We set $N_1=\ldots=N_M=N$ and let $N\to \infty$. For the dynamic ASEP jump rates we have
\[
(q^{-1}-q)\lim_{N \to \infty}   C_k^{\mathsf R,+}(\xi) = \frac{ 1-q^{2\xi_k} }{1-q^2} \qquad \text{and} \qquad \lim_{N\to \infty}  C_{k+1}^{\mathsf R,-}(\xi) = 0,
\]
in which we recognize the jump rates for the totally asymmetric zero range process $q$-TAZRP, see e.g.~\cite[Section 2]{BoCoSa}. This is the continuous-time Markov jump process on the state space $(\Z_{\geq 0})^M$ where particles can only jump to the right, with generator given by
\[
[L_q^{\mathsf R}f](\xi) =\sum_{k=1}^{M-1} \frac{ 1-q^{2\xi_k} }{1-q^2} [f(\xi^{k,k+1})-f(\xi)].
\]
Note that the dynamic parameter $\rho$ vanishes in this limit.

Similarly, we have
\[
\lim_{N \to \infty} C_k^{\mathsf L,+}(\zeta) = 0 \qquad \text{and} \qquad (q^{-1}-q)\lim_{N\to \infty}  C_{k+1}^{\mathsf L,-}(\zeta) = \frac{ 1-q^{2\zeta_{k+1}} }{1-q^2},
\]
so that we obtain another totally asymmetric zero range process. In this case, particles can only jump to the left, and the generator is given by
\[
[L_q^{\mathsf L}f](\zeta) =\sum_{k=1}^{M-1} \frac{ 1-q^{2\zeta_{k+1}} }{1-q^2} [f(\zeta^{k+1,k})-f(\zeta)].
\]
So we have
\[
(q^{-1}-q)\lim_{N\to \infty} L^{\mathsf R}_{q,\vec{N},\rho} = L^{\mathsf R}_{q} \qquad \text{and} \qquad  (q^{-1}-q)\lim_{N\to \infty} L^{\mathsf L}_{q,\vec{N},\lambda} = L^{\mathsf L}_{q}.
\]
By taking an appropriate limit of the duality function $R^v$, we get the following known duality for $q$-TAZRP (see e.g. \cite{BoCoSa} and \cite{CGRS}).
\begin{proposition} \label{prop:duality q-TAZRP}
	The function $D:(\Z_{\geq0})^M \times (\Z_{\geq0})^M \to \R$ given by
	\[
	D(\zeta,\xi) =  v^{-|\xi|} q^{-|\xi|(|\xi|+\rho+\la)}\lim_{N\to \infty} \frac{ q^{|\xi|MN}}{c^v(|\zeta|,|\xi|;\lambda,\rho)} R^v(\zeta,\xi) = \prod_{k=1}^M q^{2\xi_k \sum_{j=1}^{k}\zeta_j},
	\]
	where $c^v$ is given by \eqref{eq:invariant c def}, satisfies $[L_q^{\mathsf L} D(\, \cdot\, , \xi)](\zeta) = [L_q^{\mathsf R} D(\zeta,\,\cdot\,)](\xi)$.
\end{proposition}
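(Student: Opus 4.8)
The statement packages two claims: that the normalized $N\to\infty$ limit of $R^v$ equals the monomial product $\prod_{k=1}^M q^{2\xi_k\sum_{j=1}^{k}\zeta_j}$, and that this product is a duality function between $L_q^{\mathsf L}$ and $L_q^{\mathsf R}$. My plan is to establish the limit first and then transfer the duality \eqref{eq:dualityLASEPRASEP} through the limit $N\to\infty$.

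For the limit, I would start from the nested product \eqref{eq:duality Racah polynomials}, set $N_1=\cdots=N_M=N$, and analyse each $1$-site factor $r(\zeta_k,\xi_k;\cdots)$ separately, with $x=\xi_k$ the variable and $y=\zeta_k$ the degree. Each factor is a coefficient $c_{\mathrm r}$ times a terminating ${}_4\varphi_3$; with the parameters of Theorem~\ref{Thm:KrawtchoukRacah} every nontrivial argument carries a factor $q^{-N}$ (for instance $\gamma=q^{-2N-2}$, $\alpha\beta q^{2y+2}=-q^{2\rho+2y-2N}$, $\gamma\delta q^{2x+2}=-q^{2\lambda+2x-2N}$), so for $q\in(0,1)$ these parameters diverge. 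Using the asymptotics $(aq^{-2N};q^2)_n\sim(-a)^n q^{-2Nn}q^{n(n-1)}$ for the diverging $q$-shifted factorials, each term of the series picks up an explicit power of $q^{-N}$; the prefactor $v^{-|\xi|}q^{-|\xi|(|\xi|+\rho+\lambda)}q^{|\xi|MN}/c^v(|\zeta|,|\xi|;\lambda,\rho)$ is tuned precisely so that the terminating series collapses to a single monomial and all dependence on $v,\rho,\lambda$ cancels, leaving $q^{2\xi_k\sum_{j=1}^{k}\zeta_j}$. This cancellation is pure bookkeeping and I would relegate its details to Section~\ref{sec:calculations of limits}; the disappearance of $v,\rho,\lambda$ from the answer is a useful consistency check on the normalization.

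For the duality, the cleanest route is to push the established relation \eqref{eq:dualityLASEPRASEP} to the limit. The prefactor $f(|\zeta|,|\xi|)=v^{-|\xi|}q^{-|\xi|(|\xi|+\rho+\lambda)}q^{|\xi|MN}/c^v(|\zeta|,|\xi|;\lambda,\rho)$ depends only on the conserved totals $|\zeta|,|\xi|$, so by Remark~\ref{rem:InvTotPart} the function $\tilde R^v:=f\,R^v$ is again a duality function between $\genLASEP$ and $\genRASEP$ for every fixed $N$. Multiplying this duality equation by $(q^{-1}-q)$ and invoking the generator limits $(q^{-1}-q)\lim_N\genRASEP=L_q^{\mathsf R}$ and $(q^{-1}-q)\lim_N\genLASEP=L_q^{\mathsf L}$ recorded above, I would let $N\to\infty$. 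For a fixed pair of configurations each generator is a finite sum over neighbouring states with rates that converge, so the pointwise convergence $\tilde R^v\to D$ from the previous paragraph lets me pass the limit inside the sums, yielding $[L_q^{\mathsf L} D(\,\cdot\,,\xi)](\zeta)=[L_q^{\mathsf R} D(\zeta,\,\cdot\,)](\xi)$.

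As an independent check that avoids the interchange-of-limits bookkeeping, once the monomial form of $D$ is known the duality can be verified by hand: writing $S_l=\sum_{j=1}^{l}\zeta_j$ one computes $D(\zeta,\xi^{k,k+1})/D(\zeta,\xi)=q^{2\zeta_{k+1}}$ and $D(\zeta^{k+1,k},\xi)/D(\zeta,\xi)=q^{2\xi_k}$, after which both sides of the duality equation collapse to $-\frac{1}{1-q^2}D(\zeta,\xi)\sum_{k=1}^{M-1}(1-q^{2\xi_k})(1-q^{2\zeta_{k+1}})$, which is manifestly symmetric in the two processes. The genuine obstacle is therefore the limit of the second paragraph: keeping track of the diverging $q$-Racah parameters and confirming that the chosen normalization cancels the $q^{\pm N}$ growth exactly, whereas the duality itself becomes immediate once $D$ has been identified.
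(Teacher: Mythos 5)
Your overall route is sound, and for the duality relation itself you actually go beyond the paper: the paper obtains the duality by inheriting it from \eqref{eq:dualityLASEPRASEP} via Remark \ref{rem:InvTotPart} and the generator limits $(q^{-1}-q)\lim_N \genLASEP = L^{\mathsf L}_q$, $(q^{-1}-q)\lim_N\genRASEP = L^{\mathsf R}_q$ (exactly the transfer argument of your second paragraph), while its proof subsection only computes the limit of the duality function. Your closing direct verification --- $D(\zeta,\xi^{k,k+1})=q^{2\zeta_{k+1}}D(\zeta,\xi)$ and $D(\zeta^{k+1,k},\xi)=q^{2\xi_k}D(\zeta,\xi)$, after which both sides equal $-\tfrac{1}{1-q^2}\sum_{k=1}^{M-1}(1-q^{2\xi_k})(1-q^{2\zeta_{k+1}})D(\zeta,\xi)$ --- is correct and makes the duality claim self-contained, with no interchange of limits needed; that is a genuinely more elementary argument than the paper offers for this half of the statement.

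The inaccuracy is in your account of the $N\to\infty$ limit of the $1$-site factors, which is the part of the proposition asserting a quantitative identity. The series does not ``collapse to a single monomial'', and the normalizing prefactor is not what tames the divergences inside the series. Writing each factor as in \eqref{eq:r=4phi3}, the $n$-th term of the $\rphisempty{4}{3}$ has numerator parameters diverging like $q^{-2(M-k+1)N}$ and $q^{-2kN}$ (recall $h^{+}_{k+1}$ and $h^{-}_{k-1}$ themselves carry $-(M-k)N$ and $-(k-1)N$) and denominator parameters diverging like $q^{-MN}$, $q^{-MN}$ and $q^{-2N}$; by your own asymptotics these contribute $q^{-2(M+1)Nn}$ upstairs and downstairs alike, so the divergences cancel \emph{termwise}, every term survives, and the series converges to $\rphisempty{2}{0}\,(q^{-2\zeta_k},q^{-2\xi_k};-;q^2,q^{2(\zeta_k+\xi_k)})$. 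Extracting the monomial $q^{2\zeta_k\xi_k}$ from this is then a limit case of the $q$-Chu--Vandermonde summation \cite[(II.6)]{GR} --- a genuine summation identity, not bookkeeping, and it is precisely the step the paper's proof invokes. The prefactor and the coefficients $c_{\mathrm r}$ only account for the cross-site factors $q^{2\xi_k\sum_{j<k}\zeta_j}$ and the cancellation of $v,\rho,\la$. With that identity supplied, your limit argument coincides with the paper's.
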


\section{Symmetric degenerations} \label{sec:symmetric degenerations}
Next, we consider the $q \to 1$ limit of the duality equation \eqref{eq:duality equation dynamic ASEP}, Proposition \ref{prop:degenerate duality} and Proposition \ref{prop:degenerate orthogonality}. The duality functions and measures we obtain in this section are similar to the asymmetric case since in the $q\to1$ limit factors of the form $q^A$ disappear and the $q$-shifted factorials become ordinary shifted factorials using
\[
\lim\limits_{q\to 1} \frac{(q^{2a};q^2)_n}{(1-q^2)^n}=\big(a\big)_n.
\]
We have put a hat on the duality functions and measures without a parameter $q$ to distinguish between them and their counterparts that do depend on $q$.\\

Let us first consider to $q\to1$ limit of (nondynamic) $\ASEP{q,\vec{N}}$. This gives the well known generalized symmetric simple exclusion process $\SEP{\vec{N}}$, which is the continuous-time Markov process with state space $X$ where particles jump from site $k$ to $k+1$ at rate
\[
\lim_{q \to 1} c_k^+(\eta) = \eta_k(N_{k+1}-\eta_{k+1}),
\]
and interchanging $k$ and $k+1$ gives the rate for jumps from site $k+1$ to $k$, 
\[	
\lim_{q \to 1} c_k^-(\eta) = \eta_{k+1}(N_{k}-\eta_{k}).
\]
We denote the corresponding Markov generator by $L_{\vec{N}}$. The dynamic ASEP jump rates $\mathsf C_{k}^{\mathsf R,\pm}$ from Definition \ref{Def:dynamic ASEP} are of the form $c_k^{\pm} \cdot d_k^{\pm}$, where $c_k^\pm$ are the ASEP jump rates. Assuming $\rho\in \R$ the factors $d_k^{\pm}$ satisfy $\lim_{q \to 1} d_{k}^\pm =1$, so the limit of $\RASEP{q,\vec{N},\rho}$ is again $\SEP{\vec{N}}$, and the dynamic parameter vanishes in this limit. Clearly, for $\lambda \in \R$ the $q \to 1$ limit of $\LASEP{q,\vec{N},\lambda}$ is also $\SEP{\vec{N}}$. So for $\rho,\lambda \in \R$,
\[
\lim_{q \to 1} L^{\mathsf R}_{q,\vec{N},\rho} = L_{\vec{N}} \qquad \text{and} \qquad \lim_{q \to 1} L^{\mathsf L}_{q,\vec{N},\lambda} = L_{\vec{N}}.
\]
By taking the limit $q \to 1$ of the $\ASEP{q,\vec{N}}$ reversible measure $w$ we obtain the well-known reversible measure for $\SEP{\vec{N}}$ as a product of binomials,
\[
\hat{w}(\eta;\vec{N}) = \lim_{q \to 1} w(\eta;\vec{N};q) = \prod_{k=1}^M \binom{N_k}{\eta_k}.
\]
Taking the limit of the reversible measure for $\RASEP{q,\vec{N},\rho}$ or $\LASEP{q,\vec{N},\la}$ gives essentially the same result,
\[
\lim_{q \to 1} W_{\mathsf R}(\eta;\vec{N},\rho;q) = \lim_{q\to 1} W_{\mathsf L}(\eta;\vec{N},\la;q) = 2^{-|\vec{N}|} \hat{w}(\eta;\vec{N}).
\]
Moreover, for $\rho=\la=\frac12\log_q(v)$ with $v>0$, i.e.~$q^{2\rho}=v$, we have
\[
\lim_{q \to 1} W_{\mathsf R}(\eta;\vec{N},\rho;q) = \lim_{q\to 1} W_{\mathsf L}(\eta;\vec{N},\la;q) = p_v^{|\eta|} (1-p_v)^{|\vec{N}|-|\eta|} \hat w(\eta;\vec{N}), \qquad p_v = \frac{1}{1+v}.
\]
\medskip

\subsection{Generalized dynamic SSEP}
To obtain a non-trivial limit of the factor $d_k^{\pm}$, we replace $q^{2\rho}$ by $-q^{2\rho}$. This can be done by substituting $\rho \mapsto \rho + \pi i / 2\ln(q)$, where $i=\sqrt{-1}$. We then get, using the rewriten rates \ref{eq:rewrittenrates},
\begin{align*}
\begin{split}
	C^{\mathsf R,+}_k(\xi) &= [\xi_k]_q[N_{k+1}-\xi_{k+1}]_q\frac{[h_k^+-\xi_k]_q[h_{k+1}^+-\xi_{k+1}]_q}{[h_{k+1}^+]_q[h_{k+1}^++1]_q},\\
	C^{\mathsf R,-}_{k}(\xi) &= [\xi_k]_q[N_{k-1}-\xi_{k-1}]_q\frac{[h_k^++\xi_{k-1}]_q[h_{k+1}^++\xi_{k}]_q}{[h_{k}^+]_q[h_{k}^+-1]_q}.
\end{split}	
\end{align*}
If we now take the limit $q\to1$, we obtain a dynamic version of the symmetric exclusion process. In a similar way, we obtain a dynamic version of SSEP from ASEP$_{\mathsf L}$.  We impose that the rates stay nonnegative, for example by requiring $|\rho|,|\lambda|> |\vec{N}|$.
\begin{Definition} \label{Def:dynamic SEP} \*
	\begin{enumerate}[label = (\roman*)]
		\item $\RSEP{\vec{N},\rho}$ is a continuous-time Markov jump process on the state space $X$ depending on a parameter $\rho \in \R$. Given a state $\xi=(\xi_k)_{k=1}^M \in X$ we define the height function $(h^{+}_{k+1}(\xi))_{k=1}^M$ as before by
		\[
		h^{+}_{k,\rho}(\xi)=h^{+}_k(\xi) = \rho + \sum_{j=k}^M (2\xi_j - N_j).
		\]
		Then a particle on site $k$ jumps to site $k+1$ at rate
		\[
		\frac{\xi_k(N_{k+1}-\xi_{k+1})(h^{+}_k(\xi)-\xi_k)(h^{+}_{k+1}(\xi)-\xi_{k+1})}{h^{+}_{k+1}(\xi)(h^{+}_{k+1}(\xi)+1)},
		\]
		and a particle on site $k$ jumps to site $k-1$ at rate
		\[
		\frac{\xi_{k}(N_{k-1}-\xi_{k-1})(h^{+}_{k}(\xi)+\xi_{k-1})(h^{+}_{k+1}(\xi)+\xi_{k})}{h^{+}_k(\xi)(h^{+}_k(\xi)-1)}.
		\]
		\item $\LSEP{\vec{N},\lambda}$ is a continuous-time Markov jump process on the state space $X$ depending on a parameter $\lambda \in \R$. Given a state $\zeta=(\zeta_k)_{k=1}^M \in X$ we define the height function $(h^{-}_{k}(\zeta))_{k=1}^M$ as before by
		\[
		h^{-}_{k}(\zeta) = \lambda + \sum_{j=1}^{k} (2\xi_j - N_j).
		\]
		Then a particle on site $k$ jumps to site $k+1$ at rate
		\[
		\frac{\zeta_k(N_{k+1}-\zeta_k)(h^{-}_{k-1}(\zeta)+\zeta_k)(h^{-}_{k}(\zeta)+\zeta_{k+1})}{h^{-}_{k}(\zeta)(h^{-}_{k}(\zeta)-1)},
		\]
		and a particle on site $k$ jumps to site $k-1$ at rate
		\[
		\frac{\zeta_{k}(N_{k-1}-\zeta_{k-1})(h^{-}_{k-1}(\zeta)-\zeta_{k-1})(h^{-}_{k}(\zeta)-\zeta_{k})}{h^{-}_{k-1}(\zeta)(h^{-}_{k-1}(\zeta)+1)}.
		\]
	\end{enumerate}
\end{Definition} 
\begin{remark}\*
	\begin{itemize}
		\item For $N_1=\ldots=N_M=1$ $\RSEP{\vec{N},\rho}$ is the dynamic symmetric simple exclusion process defined in \cite[Definition 9.4]{Bo} resticted to $M$ sites.
		\item In the limit $\rho \to \pm \infty$ we recover $\SEP{\vec{N}}$ from $\RSEP{\vec{N},\rho}$. Similarly, by letting $\lambda \to \pm \infty$ we recover $\SEP{\vec{N}}$ from $\LSEP{\vec{N},\lambda}$.
	\end{itemize}
\end{remark}

\subsection{Duality between SSEP$_\mathsf{L}$ and SSEP$_\mathsf{R}$}
Let us now consider the corresponding $q \to 1$ limit of the $q$-Racah duality functions, which leads to duality functions in terms of Racah polynomials. These are hypergeometric orthogonal polynomials given by
\[
\hat{R}_n(x;\alpha,\beta,\gamma,\delta) = \rFs{4}{3}{-n,n+\alpha+\beta+1,-x,x+\gamma+\delta+1}{\alpha+1,\beta+\delta+1,\gamma+1}{1},
\]
which can be obtained as the $q \to 1$ limit of the $q$-Racah polynomials $R_n(x;\alpha,\beta,\gamma,\delta;q)$ in case $\alpha,\beta,\gamma,\delta \in \R$. We substitute $(v,\rho,\lambda) \mapsto (q^v,\rho+i\pi/2\ln(q),\lambda+\pi i/2\ln(q))$ in the duality function $R^v$ given by \eqref{eq:duality Racah polynomials}. We then let $q \to 1$ to obtain duality functions for SSEP$_\mathsf{L}$ and SSEP$_\mathsf{R}$ as a nested product of Racah polynomials, see the result in Proposition \ref{prop:duality Racah} below. From the reversible measures for $\LASEP{q,\vec{N},\lambda}$ and $\RASEP{q,\vec{N},\rho}$ we obtain the following reversible measures for $\LSEP{\vec{N},\lambda}$ and $\RSEP{\vec{N},\rho}$,
\begin{equation} \label{eq:WR WL q->1}
	\begin{split}
		\hat{W}_{\mathsf L}(\zeta;\vec{N},\lambda) &= \lim_{q \to 1} (1-q^2)^{|\vec{N}|} W_{\mathsf L}(\zeta;\vec{N},\lambda+i\pi/2\ln(q);q) = \prod_{k=1}^{M} \hat{W}(\zeta_k;N_k,h^{-}_{k-1}(\zeta)), \\
		\hat{W}_{\mathsf R}(\xi;\vec{N},\rho) &= \lim_{q \to 1} (1-q^2)^{|\vec{N}|} W_{\mathsf R}(\zeta;\vec{N},\rho+i\pi/2\ln(q);q) = \prod_{k=1}^{M} \hat{W}(\xi_k;N_k,h^{+}_{k+1}(\xi)),
	\end{split}
\end{equation}
where the `1-site' weight function $\hat{W}$ is given by
\[
\begin{split}
	\hat{W}(x;N,\rho)&= \lim_{q \to 1} (1-q^2)^N W(x;N,\rho+i\pi/2\ln(q);q)\\
	& = \frac{2x+\rho-N}{\rho-N} \frac{ (\rho-N)_x}{ (\rho+1)_x (-\rho)_N}\binom{N}{x} .
\end{split}
\]
The Racah duality functions $\hat{R}^v$ in the proposition below are orthogonal with respect to these reversible measures, which follows from letting $q \to 1$ in the orthogonality relations for the $q$-Racah duality functions from Theorem \ref{Thm:orthogonaldualityLASEPRASEP}, see Section \ref{sec:calculations of limits} for the details of the proof.
\begin{proposition} \label{prop:duality Racah}
	Define the function $\hat{R}^v:X \times X \to \R$ by
	\[
	\hat{R}^v(\zeta,\xi) = \lim_{q \to 1} (1-q^2)^{-|\vec{N}|} R^v(\zeta,\xi) = \prod_{k=1}^M \hat{r}(\zeta_k,\xi_k;h^{-}_{k-1}(\zeta),h^{+}_{k+1}(\xi),v,N_k),
	\]
	with the $1$-site duality function
	\[
	\begin{split}
		\hat{r}(y,x;\lambda,\rho,v,N) &= \frac{(\frac12(\rho+\lambda-N+v+1))_x (\frac12(\rho+\lambda-N-v+1))_y (y+\frac12(\lambda-\rho-N+v+1))_N }{ (-x+\frac12(\lambda-\rho+N+v+1))_{x+y} } \\
		&\qquad \times  \hat{R}_x(y;\tfrac12(\rho+\lambda-N-v-1), \tfrac12(\rho-\lambda-N+v-1), -N-1,\lambda).
	\end{split}
	\]
	Then $\hat{R}^v$ is a duality function between $\LSEP{\vec{N},\lambda}$ and $\RSEP{\vec{N},\rho}$, i.e.
	\[ 
		[L_{\vec{N},\lambda}^{\mathsf L} R(\, \cdot\,,\xi)](\zeta) = [L_{\vec{N},\rho}^{\mathsf R} R(\zeta,\, \cdot\,)](\xi).
	\]
	Furthermore, we have the following orthogonality relations,
	\[
	\begin{split}
		\sum_{\zeta \in X} \hat{R}^v(\zeta,\xi) \hat{R}^{-v}(\zeta,\xi') \hat{W}_{\mathsf L}(\zeta;\vec{N},\lambda) = \frac{ \delta_{\xi,\xi'}}{\hat{W}_\mathsf{R}(\xi;\vec{N},\rho)},\\
		\sum_{\xi\in X} \hat{R}^v(\zeta,\xi) \hat{R}^{-v}(\zeta',\xi) \hat{W}_{\mathsf R}(\xi;\vec{N},\rho) = \frac{ \delta_{\zeta,\zeta'}}{\hat{W}_\mathsf{L}(\zeta;\vec{N},\lambda)}.
	\end{split}
	\]
\end{proposition}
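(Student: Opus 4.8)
The plan is to obtain Proposition~\ref{prop:duality Racah} entirely by taking the $q\to 1$ limit of the already-established $q$-Racah duality \eqref{eq:dualityLASEPRASEP} between $\LASEP{q,\vec{N},\lambda}$ and $\RASEP{q,\vec{N},\rho}$, after the reparametrization $(v,\rho,\lambda)\mapsto(q^v,\rho+i\pi/2\ln(q),\lambda+i\pi/2\ln(q))$. The key observation is that this reparametrization sends $q^{2\rho}\mapsto -q^{2\rho}$ and $q^{2\lambda}\mapsto -q^{2\lambda}$, which is exactly the substitution that turns the $\RASEP{q,\vec{N},\rho}$ and $\LASEP{q,\vec{N},\lambda}$ rates into the form involving only $[\,\cdot\,]_q$ factors displayed just before Definition~\ref{Def:dynamic SEP}. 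Since the duality equation \eqref{eq:duality equation dynamic ASEP} is a rational identity in $q^\rho,q^\lambda,v$ (a finite linear combination over the finite state space $X$), it persists under this complex shift by analytic continuation, and it is preserved under multiplying the duality function by the state-independent constant $(1-q^2)^{-|\vec{N}|}$.

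First I would handle the convergence of the generators. Using $\lim_{q\to1}[a]_q=a$ on the rewritten rates, the shifted $\genRASEP$ converges to $L^{\mathsf R}_{\vec{N},\rho}$ and $\genLASEP$ to $L^{\mathsf L}_{\vec{N},\lambda}$, the $\RSEP{\vec{N},\rho}$ and $\LSEP{\vec{N},\lambda}$ generators from Definition~\ref{Def:dynamic SEP}; this was essentially carried out at the opening of this section. Because the state space is finite, each generator is a finite sum, so the $q\to1$ limit may be taken termwise inside both sides of the rescaled duality equation. It then suffices to prove that $(1-q^2)^{-|\vec{N}|}R^v(\zeta,\xi)$ converges to the claimed nested product $\prod_{k=1}^M \hat r(\zeta_k,\xi_k;h^{-}_{k-1}(\zeta),h^{+}_{k+1}(\xi),v,N_k)$.

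The latter is a site-by-site computation: I would show $\lim_{q\to1}(1-q^2)^{-N}r(y,x;\lambda+i\pi/2\ln(q),\rho+i\pi/2\ln(q),q^v,N;q)=\hat r(y,x;\lambda,\rho,v,N)$ for each factor, where $r$ is the $1$-site $q$-Racah duality function from \eqref{eq:1-site duality r}. This uses the standard degeneration of $R_y(x;\alpha,\beta,\gamma,\delta;q^2)$ to the Racah polynomial $\hat R_x(y;\ldots)$ together with the limit $\lim_{q\to1}(q^{2a};q^2)_n/(1-q^2)^n=(a)_n$ applied to the shifted factorials in the prefactor $c_{\mathrm r}$ recorded in Appendix~\ref{app:overview}. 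The main obstacle, and the only genuinely delicate part, is the bookkeeping here: one must verify that the chosen power $(1-q^2)^{-|\vec{N}|}$ exactly cancels the combined vanishing and blow-up of $c_{\mathrm r}$ and the polynomial so that the limit is finite and nonzero, and one must check that the $-q^{2\rho},-q^{2\lambda}$ coming from the imaginary shift produce the real Racah parameters $\big(\tfrac12(\rho+\lambda-N-v-1),\tfrac12(\rho-\lambda-N+v-1),-N-1,\lambda\big)$ together with a real-valued $\hat r$, i.e.\ the spurious factors of $i$ must collect into powers of $-1$ that are absorbed by the prefactors.

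Finally, the orthogonality relations follow by taking the same limit in Theorem~\ref{Thm:orthogonaldualityLASEPRASEP}. Under $v\mapsto q^v$ the companion factor $R^{v^{-1}}$ becomes $R^{q^{-v}}$, whose rescaled limit is $\hat R^{-v}$, while by \eqref{eq:WR WL q->1} the measures satisfy $W_{\mathsf L}\sim(1-q^2)^{-|\vec{N}|}\hat W_{\mathsf L}$ and $W_{\mathsf R}\sim(1-q^2)^{-|\vec{N}|}\hat W_{\mathsf R}$. Substituting these asymptotics, each summand on the left acquires a net factor $(1-q^2)^{|\vec{N}|}$ and the right-hand side $\delta_{\xi,\xi'}/W_{\mathsf R}(\xi)$ acquires the same factor, so the powers of $(1-q^2)$ cancel and the limit yields exactly the stated orthogonality of $\hat R^v$ against $\hat W_{\mathsf L}$ and $\hat W_{\mathsf R}$. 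The finiteness of $X$ again justifies interchanging the limit with the finite sum.
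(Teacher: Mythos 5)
Your proposal is correct and follows essentially the same route as the paper: the substitution $(v,\rho,\lambda)\mapsto(q^v,\rho+i\pi/2\ln(q),\lambda+i\pi/2\ln(q))$ in the $q$-Racah duality \eqref{eq:dualityLASEPRASEP}, a site-by-site $q\to1$ limit of the $1$-site function $r$ from \eqref{eq:1-site duality r} normalized by $(1-q^2)^{-N}$ via the shifted-factorial and $\rphisempty{4}{3}\to{}_4F_3$ limits, and the corresponding limit of Theorem \ref{Thm:orthogonaldualityLASEPRASEP} combined with \eqref{eq:WR WL q->1} for the orthogonality relations. Your explicit remarks that the imaginary shift is justified by viewing the duality as a rational identity in $q^{\rho},q^{\lambda},v$, and that $R^{v^{-1}}=R^{q^{-v}}$ rescales to $\hat{R}^{-v}$ (explaining the $\pm v$ pairing in the orthogonality), spell out two points the paper leaves implicit, but the argument is the same.
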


\subsection{Degenerations}
Similar to the asymmetric case, we will consider the dualities listed in the table below. Note that in the $q\to1$ limit, the processes $\ASEP{q,\vec{N}}$ and $\ASEP{q^{-1},\vec{N}}$ both become $\SEP{\vec{N}}$.
\begin{table}[h]
	\begin{tabular}{|l|ll|}
		\hline
		\ &\multicolumn{2}{|l|}{\hspace{0.35cm} Duality between}\\ \hline
		(i)&\multicolumn{1}{|l|}{$\SEP{\vec{N}}$}       &  $\RSEP{\vec{N},\rho}$                \\ \hline
		(ii)&\multicolumn{1}{|l|}{$\SEP{\vec{N}}$ }      &  $\SEP{\vec{N}}$                \\ \hline
	\end{tabular}
\end{table}\\
There are two routes for obtaining these dualities at this point. In this paper, we take the $q\to 1$ limit from the dualities from Proposition \ref{prop:degenerate duality} and Theorem \ref{Thm:dualityASEPRASEP}. An equivalent way would be taking limits from the duality between SSEP$_\mathsf{L}$ and SSEP$_\mathsf{R}$ shown in Proposition \ref{prop:duality Racah}.\\
\\
So let us consider the $q \to 1$ limit of the $q$-Hahn and $q$-Krawtchouk\footnote{In this paper, we have several choices for duality functions to obtain a self-duality function for SSEP as a limit. We choose the function $K_{\mathsf{R}}$, but others are equally valid.} duality functions $P_{\mathsf R}$ and $K_\mathsf{R}$. To make these limits convergent, we again need a factor in front that only depends on the parameters and total (dual) particles. Not surprisingly, we will end up with Hahn and Krawtchouk polynomials. 
\begin{enumerate}[label=(\roman*)]
	\item 	The Hahn polynomials \cite[\S9.5 and \S9.6]{KLS} are defined by
			\[
				\hat{P}_n(x;\alpha,\beta,N) = \rFs{3}{2}{-n,n+\alpha+\beta+1,-x}{\alpha+1,-N}{1}.
			\]
			We define the 1-site duality functions
			\[
			\begin{split}
				\hat{p}(n,x;\lambda,\rho,v,N) & = \frac{ (\frac12(\rho+\lambda-N+v+1))_{x} (y+\frac12(\lambda-\rho-N+1+v))_{N} }{ (-x+\frac12(\lambda-\rho+N+1+v))_{x+n} } \\
				& \qquad \times \hat{P}_x(n;\tfrac12(\rho+\lambda-N+v-1),\tfrac12(\rho-\lambda-N-v-1),N).
			\end{split}
			\]
	\item 	The Krawtchouk polynomials \cite[\S9.11]{KLS} are defined by
			\[
				\hat{K}_n(x;p,N) = \rFs{2}{1}{-n,-x}{-N}{\frac1p}.
			\]
			We define the 1-site duality functions
			\[
				\hat{k}(n,x;v,N)= v^{\frac{n}{2}}\hat{K}_{n}(x;\tfrac{1}{1+v},N_k).
			\]
\end{enumerate}
In the $q\to1$ limit of the $q$-Hahn polynomials, we first substitute $(v,\rho) \mapsto (iq^{v},\rho+i\pi/2\ln(q))$ in $P_{\mathsf R}$. The $q$-Krawtchouk polynomials $K_\mathsf{R}$ depend, besides $q$, on the dynamic parameter $\rho$, but not on other extra parameters. We substitute $\rho \mapsto \frac12\log_q(v)$ for $v>0$, i.e.~$q^{2\rho} \mapsto v$, and then let $q \to 1$. In this way, the dynamic parameter $\rho$ ends up in the duality function as just a free parameter. We then obtain the following result similar to Proposition \ref{prop:degenerate duality}, for which the proof can be found in Section \ref{sec:calculations of limits}.
\begin{proposition}\* \label{prop:degenerate duality sym}
	\begin{enumerate}[label=(\roman*)]
		\item Define the function $\hat{P}_{\mathsf R}^v:X \times X \to \R$ by
		\[
		\hat{P}^v_{\mathsf{R}}(\eta,\xi) = \lim_{q \to 1} (1-q^2)^{|\zeta|-|\vec{N}|} P^{iq^v}_{\mathsf{R}}(\eta,\xi),
		\]
		where $\rho$ is replaced by $\rho+i\pi/2\ln(q)$ in $P^{iq^v}_{\mathsf{R}}$.
		Then $\hat{P}_\mathsf{R}$ is a duality function between $\SEP{\vec{N}}\leftrightarrow\RSEP{\vec{N},\rho}$ and 
		\[
		\hat{P}^v_{\mathsf{R}}(\eta,\xi)= \prod_{k=1}^M \hat{p}(\eta_k,\xi_k;h^{+}_{k+1}(\xi),h^{-}_{k-1,0}(\eta),v,N_k).
		\]
		\item Define the function $\hat{K}^v:X \times X \to \R$ by
		\[
		\hat{K}^v(\eta,\xi) = \lim_{q \to 1} (-1)^{|\eta|} K_{\mathsf R}(\eta,\xi),
		\]
		where $\rho=\tfrac12\log_q(v)$ in $K_\mathsf{R}$. Then for $v>0$, $\hat{K}^v$ is a self-duality function for $\SEP{\vec{N}}$ and 
		\[
		\hat{K}^v(\eta,\xi)= \prod_{k=1}^M \hat{k}(\eta_k,\xi_k;v,N_k).
		\]
	\end{enumerate}
\end{proposition}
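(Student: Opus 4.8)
The plan is to obtain both statements by passing to the limit $q\to1$ in dualities that have already been established, rather than by verifying the (generically quite involved) generator identities from scratch. For part (i) the starting point is Proposition~\ref{prop:degenerate duality}(i), which asserts that $P^v_{\mathsf R}$ is a duality function between $\ASEP{q,\vec N}$ and $\RASEP{q,\vec N,\rho}$; for part (ii) it is Theorem~\ref{Thm:dualityASEPRASEP}, giving $K_{\mathsf R}$ as a duality function between $\ASEP{q,\vec N}$ and $\RASEP{q,\vec N,\rho}$. Since everything lives on the finite state space $X$, each duality equation is a finite collection of polynomial identities in the jump rates and in the values of the duality function. We already know from the start of Section~\ref{sec:symmetric degenerations} that the generators converge, $\lim_{q\to1}L_{q,\vec N}=L_{\vec N}$ and $\lim_{q\to1}L^{\mathsf R}_{q,\vec N,\rho}=L_{\vec N}$ (the substitutions below keep the rates real and, under $|\rho|>|\vec N|$, nonnegative). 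Hence, once the rescaled duality functions are shown to converge to the stated products, the limiting functions automatically satisfy the limiting duality equation. Multiplying by the scalar prefactors (a power of $(1-q^2)$ depending only on total particle number in (i), and $(-1)^{|\eta|}$ in (ii)) is legitimate because these depend only on the total particle numbers, so by Remark~\ref{rem:InvTotPart} they do not spoil the duality.

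For part (i) I would first carry out the substitution $(v,\rho)\mapsto(iq^{v},\rho+i\pi/(2\ln q))$ inside $P^v_{\mathsf R}$. The key observation is that $q^{\,i\pi/(2\ln q)}=i$, so this shift sends $q^{2\rho}\mapsto-q^{2\rho}$ and $v^{2}\mapsto-q^{2v}$; these sign flips are exactly what is needed so that the base-$q^2$ $q$-Hahn parameters $(\alpha,\beta)=(-vq^{\rho+\lambda-N-1},v^{-1}q^{\rho-\lambda-N-1})$ tend to \emph{real} numbers as $q\to1$. I would then write $P^v_{\mathsf R}$ as its nested product over sites and treat one factor at a time: on the coefficient $c_{\mathrm p}$ one applies $\lim_{q\to1}(q^{2a};q^2)_n/(1-q^2)^n=(a)_n$ repeatedly, and on the polynomial part one uses the standard $q$-Hahn $\to$ Hahn limit recorded in \cite{KLS}. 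The external factor $(1-q^2)^{|\eta|-|\vec N|}$ is chosen precisely to balance the leftover powers of $(1-q^2)$ produced by $c_{\mathrm p}$ across all sites, so that the product converges to $\prod_k\hat p(\eta_k,\xi_k;\dots)$. The dynamic parameter $\rho$ does not disappear in this limit: it survives as an ordinary free parameter in the Hahn polynomial.

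Part (ii) follows the same template but is lighter, since $K_{\mathsf R}$ carries no extra free parameter. I would substitute $\rho\mapsto\tfrac12\log_q(v)$, i.e.~$q^{2\rho}\mapsto v$, and multiply by $(-1)^{|\eta|}$. The relevant one-site factor is $k(\eta_k,\xi_k;\dots)=c_{\mathrm k}\,K_{\eta_k}(\xi_k;q^{2\rho},N_k;q^2)$, a terminating ${}_3\varphi_2$. As $q\to1$ the upper argument $-q^{2\rho}q^{2x-2N}$ tends to the finite constant $-v$ rather than to a power of $q$; consequently the series does not degenerate to a ${}_3F_2$ but instead collapses to the ${}_2F_1$ defining the Krawtchouk polynomial $\hat K_{\eta_k}(\xi_k;(1+v)^{-1},N_k)$. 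This is the documented $q$-Krawtchouk $\to$ Krawtchouk limit in the $q$-Askey scheme; combining it with the limit of the prefactor $c_{\mathrm k}$ produces the claimed $v^{\eta_k/2}$ and hence $\hat k(\eta_k,\xi_k;v,N_k)$.

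The main obstacle is the bookkeeping in the explicit limit, above all in (i). One must check that the complex shift $\rho\mapsto\rho+i\pi/(2\ln q)$ combined with $v\mapsto iq^{v}$ really produces real, finite parameters in every $q$-shifted factorial and in the $q$-Hahn series \emph{simultaneously}, and that the accumulated powers of $(1-q^2)$ in $c_{\mathrm p}$ cancel exactly against the external $(1-q^2)^{|\eta|-|\vec N|}$ so that the limit is finite and nonzero. The subtle analytic point, common to both parts, is that this is a \emph{non-generic} degeneration: an upper parameter of the basic hypergeometric series tends to a finite constant, so the naive termwise limit ${}_{r+1}\varphi_r\to{}_{r+1}F_r$ from the preliminaries does not apply directly. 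It is therefore cleanest to invoke the corresponding scheme limits ($q$-Hahn $\to$ Hahn, $q$-Krawtchouk $\to$ Krawtchouk) as in \cite{KLS} rather than to redo them by hand. Once convergence and the product forms are established, the claimed duality relations follow immediately by passing to the limit in the duality equations of Proposition~\ref{prop:degenerate duality}(i) and Theorem~\ref{Thm:dualityASEPRASEP}.
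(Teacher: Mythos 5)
Your overall route --- pass to the limit $q\to1$ in the established duality relations (Proposition \ref{prop:degenerate duality}(i) for part (i), Theorem \ref{Thm:dualityASEPRASEP} for part (ii)), compute the one-site limits via the $q$-Askey scheme, and absorb the scalar prefactors through Remark \ref{rem:InvTotPart} --- is exactly the paper's proof in Section \ref{sec:calculations of limits}. In particular your treatment of part (ii) is correct and matches the paper: with $q^{2\rho}=v$ held fixed, the upper parameter $-q^{2\rho+2x-2N}$ of the terminating $\rphisempty{3}{2}$ tends to the finite constant $-v$, so the series collapses to the ${}_{2}F_{1}$ defining a Krawtchouk polynomial rather than degenerating to a ${}_{3}F_{2}$, and the dual generator does converge to $L_{\vec{N}}$, making $\hat{K}^v$ a self-duality function for $\SEP{\vec{N}}$.

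There is, however, a genuine error in part (i): you assert $\lim_{q\to1}L^{\mathsf R}_{q,\vec{N},\rho}=L_{\vec{N}}$ and then conclude that the limiting function satisfies the limiting duality equation. That generator limit is correct only for fixed \emph{real} $\rho$ (where the dynamic factors $d_k^{\pm}\to 1$), and read literally your argument would prove that $\hat{P}^v_{\mathsf R}$ is a \emph{self}-duality function for $\SEP{\vec{N}}$ --- which is not the statement and is false, since $\hat{p}$ genuinely depends on the configuration through $h^{+}_{k+1}(\xi)$. The substitution $\rho\mapsto\rho+i\pi/2\ln(q)$, i.e.\ $q^{2\rho}\mapsto -q^{2\rho}$, does more than keep the $q$-Hahn parameters real: it turns the rates \eqref{eq:rewrittenrates} into ratios of $q$-brackets,
\[
C^{\mathsf R,+}_k(\xi) = [\xi_k]_q[N_{k+1}-\xi_{k+1}]_q\frac{[h_k^+-\xi_k]_q\,[h_{k+1}^+-\xi_{k+1}]_q}{[h_{k+1}^+]_q\,[h_{k+1}^++1]_q},
\]
whose $q\to1$ limit is precisely the rate of $\RSEP{\vec{N},\rho}$ in Definition \ref{Def:dynamic SEP}(i). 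Hence $\lim_{q\to1}L^{\mathsf R}_{q,\vec{N},\rho+i\pi/2\ln(q)} = L^{\mathsf R}_{\vec{N},\rho}$, the generator of \emph{dynamic} SSEP, and this is exactly why part (i) asserts a duality between $\SEP{\vec{N}}$ and $\RSEP{\vec{N},\rho}$. Your parenthetical about nonnegativity under $|\rho|>|\vec{N}|$ shows you were looking at the right (substituted) rates, but the generator limit you actually wrote is the wrong one, and with it the conclusion of (i) does not follow; replacing it by the limit above repairs the proof and brings it in line with the paper. (A cosmetic point: the prefactor $(1-q^2)^{|\zeta|-|\vec{N}|}$ in the statement should be read as $(1-q^2)^{|\eta|-|\vec{N}|}$, as you implicitly did.)
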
	
For completeness, the duality relations of Proposition \ref{prop:degenerate duality sym} are given by
\begin{enumerate}[label=(\roman*)]
	\item $[L_{\vec{N}} \hat{P}_{\mathsf R}(\, \cdot\,,\xi)](\eta) = [L_{\vec{N},\rho}^{\mathsf R} \hat{P}_{\mathsf R}(\eta,\, \cdot\,)](\xi)$,
	\item $[L_{\vec{N}} \hat{K}^v(\, \cdot\,,\xi)](\eta) = [L_{\vec{N}} \hat{K}^v(\eta,\, \cdot\,)](\xi)$.
\end{enumerate}
\begin{remark}\*
	\begin{itemize}
		\item Let us remark that the same result can be obtained by substituting $v \mapsto v-\la$ in Proposition \ref{prop:duality Racah} and letting $\la \to \infty$.
		\item There are several proofs of the self-duality result of $\SEP{\vec{N}}$, see e.g.~\cite{FrGi2019,ReSau2018, Gr2019}. 
		\item Note that the product $\hat{K}^v$ no longer has a nested structure.
	\end{itemize}
\end{remark}
	By taking the $q\to1$ limit in Proposition \ref{prop:degenerate orthogonality}, we also obtain orthogonality relations for the duality functions $\hat{P}^v_\mathsf{R}$ and $\hat{K}^v$. The proof can again be found in Section \ref{sec:calculations of limits}.
	\begin{proposition}\* \label{prop:degenerate orthogonality sym}
		\begin{enumerate}[label=(\roman*)]
		\item The function $\hat{P}_\mathsf{R}^v$ is an orthogonal duality function between $\SEP{\vec{N}}\leftrightarrow\RSEP{\vec{N},\rho}$ and
		\[
		\begin{split}
			\sum_{\eta \in X} \hat{P}^v_{\mathsf R}(\eta,\xi) \hat{P}^v_{\mathsf R}(\eta,\xi') \, \hat{\om}^{\mathrm p}(|\eta|)\hat{w}(\eta;\vec{N}) = \frac{ \de_{\xi,\xi'}}{\hat{\om}^{\mathrm p}_\mathsf{R}(|\xi|) \hat{W}_{\mathsf R}(\xi;\vec{N},\rho)},\\
			\sum_{\xi \in X} \hat{P}^v_{\mathsf R}(\eta,\xi) \hat{P}^v_{\mathsf R}(\eta',\xi) \, \hat{\om}^{\mathrm p}_\mathsf{R}(|\xi|)\hat{W}_{\mathsf R}(\xi;\vec{N},\rho) = \frac{ \de_{\eta,\eta'}}{\hat{\om}^{\mathrm p}(|\eta|) \hat{w}(\eta;\vec{N})}.
		\end{split}
		\]
		\item The function $\hat{K}^v$ is an orthogonal self-duality function for $\SEP{\vec{N}}$ and 
		\[
		\begin{split}
			\sum_{\eta \in X}& \hat K^v(\eta,\xi) \hat K^v(\eta,\xi') \hat w(\eta;\vec{N}) = \frac{\de_{\xi,\xi'}}{\hat \om^{\mathrm k}(|\xi|) \hat w(\xi;\vec{N})}, \\
			\sum_{\xi \in X}& \hat K^v(\eta,\xi) \hat K^v(\eta',\xi) \hat \om^{\mathrm k}(|\xi|) \hat w(\xi;\vec{N}) = \frac{\de_{\eta,\eta'}}{\hat w(\eta;\vec{N})}.
		\end{split}
		\]
		The explicit expressions of the coefficients $\hat \om$ in front of the reversible measures $\hat w$ and $\hat W_\mathsf{R}$ can be found in Appendix \ref{app:overview}.
	\end{enumerate}
	\end{proposition}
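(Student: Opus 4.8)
The plan is to derive both orthogonality relations as $q\to 1$ limits of orthogonality relations that are already available, exploiting that the state space $X$ is finite, so that the limit may be taken term-by-term inside each (finite) sum without any convergence subtlety. For part (i) the natural starting point is Proposition \ref{prop:degenerate orthogonality}(i), while for part (ii) it is Theorem \ref{Thm:reversiblemeasures}(iii), since $\hat K^v$ is defined in Proposition \ref{prop:degenerate duality sym}(ii) as a limit of $K_{\mathsf R}$ rather than of $K^v_{\mathrm{qtm}}$.

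First, for part (i), I would take the orthogonality relation
\[
\sum_{\eta \in X} P_{\mathsf R}^v(\eta,\xi) P^v_{\mathsf R}(\eta,\xi')\, \om^{\mathrm p}(|\eta|) w(\eta;\vec{N};q) = \frac{ \delta_{\xi,\xi'}}{\om^{\mathrm p}_{\mathsf R}(|\xi|)W_{\mathsf R}(\xi;\vec{N},\rho;q)}
\]
and perform exactly the substitutions used to define $\hat P^v_{\mathsf R}$ in Proposition \ref{prop:degenerate duality sym}(i), namely $v \mapsto iq^{v}$ and $\rho \mapsto \rho + i\pi/2\ln(q)$ (the latter sending $q^{2\rho}\mapsto -q^{2\rho}$, which is precisely what produces a nontrivial symmetric limit). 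I would then multiply both sides by the appropriate power of $(1-q^2)$ so that each factor $P^{iq^v}_{\mathsf R}$ converges to $\hat P^v_{\mathsf R}$, so that $w(\cdot;\vec{N};q)\to \hat w(\cdot;\vec{N})$, and so that $W_{\mathsf R}(\cdot;\vec{N},\rho+i\pi/2\ln(q);q)$ converges to $\hat W_{\mathsf R}$ via \eqref{eq:WR WL q->1}. Crucially, the exponents of $(1-q^2)$ must balance separately on the two sides; matching them pins down the normalization, and the residual factors depending only on $|\eta|$ and $|\xi|$ are exactly $\lim_{q\to1}\om^{\mathrm p}$ and $\lim_{q\to 1}\om^{\mathrm p}_{\mathsf R}$, which define $\hat\om^{\mathrm p}$ and $\hat\om^{\mathrm p}_{\mathsf R}$. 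Being functions of the total particle number only, their presence is harmless by Remark \ref{rem:InvTotPart}. The companion (second) relation follows identically from the second relation in Proposition \ref{prop:degenerate orthogonality}(i), or alternatively from the finite-dimensional transpose argument ($A^tA=I\Rightarrow AA^t=I$) used in the proof of Theorem \ref{Thm:reversiblemeasures}(iii).

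For part (ii) I would instead start from the orthogonality of $K_{\mathsf R}$ in Theorem \ref{Thm:reversiblemeasures}(iii), impose $q^{2\rho}=v$ (that is, $\rho=\tfrac12\log_q v$), and insert the $(-1)^{|\eta|}$-type factors appearing in the definition of $\hat K^v$. Under this specialization $W_{\mathsf R}$ no longer converges to $\hat w$ but to $p_v^{|\xi|}(1-p_v)^{|\vec{N}|-|\xi|}\hat w$ with $p_v=1/(1+v)$, as recorded just before Definition \ref{Def:dynamic SEP}; the $p_v$-dependent prefactors depend only on $|\xi|$ and may therefore be absorbed into the coefficient $\hat\om^{\mathrm k}$, again by Remark \ref{rem:InvTotPart}, so that the weight in the final statement is the plain binomial measure $\hat w$. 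Taking $q\to1$ then yields the stated self-duality orthogonality, with $\hat\om^{\mathrm k}$ read off from the limit of the surviving $q$-factors together with the Krawtchouk normalization.

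The main obstacle will be the bookkeeping of the powers of $(1-q^2)$ and of the complex shifts $i\pi/2\ln(q)$: one must verify that every individual factor (each $1$-site polynomial and each single-site weight) has a finite, nonzero, and real limit, and that the exponents of $(1-q^2)$ cancel exactly between the two sides so that no spurious factor $(1-q^2)^k$ survives or blows up. Keeping the single-site weights positive throughout requires the standing assumption $|\rho|,|\lambda|>|\vec{N}|$ from Definition \ref{Def:dynamic SEP}. Once this is in place, the explicit identification of $\hat\om^{\mathrm p}$, $\hat\om^{\mathrm p}_{\mathsf R}$, and $\hat\om^{\mathrm k}$ is a routine evaluation of the $q\to1$ limits of the $\om$-coefficients from Appendix \ref{app:overview}, using the standard limit $\lim_{q\to1}(q^{2a};q^2)_n/(1-q^2)^n=(a)_n$ recalled at the start of this section.
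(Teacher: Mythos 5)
Your proposal matches the paper's proof in both parts: the paper obtains (i) by letting $q\to1$ in Proposition \ref{prop:degenerate orthogonality}(i) after the substitutions $(v,\rho)\mapsto(iq^v,\rho+i\pi/2\ln(q))$, identifying $\hat\om^{\mathrm p}$ and $\hat\om^{\mathrm p}_{\mathsf R}$ as the (suitably $(1-q^2)$-normalized) limits of $\om^{\mathrm p}$ and $\om^{\mathrm p}_{\mathsf R}$, and obtains (ii) by taking $q\to1$ in Theorem \ref{Thm:reversiblemeasures}(iii) with $q^{2\rho}=v$, where indeed $\hat\om^{\mathrm k}(x)=v^{-x}/(1+\tfrac1v)^{|\vec N|}=p_v^{x}(1-p_v)^{|\vec N|-x}$ is exactly the absorbed binomial prefactor you describe. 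Your exponent-bookkeeping caveat is the only nonroutine point, and it checks out, so the argument is correct and essentially identical to the paper's.
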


\section{The quantum algebra $\U_q(\mathfrak{sl}_2)$ and $q$-Krawtchouk polynomials}\label{sec:QuantumAlgebra-Krawtchouk}
In this section, we state the necessary properties regarding the quantum algebra $\U_q(\mathfrak{sl}_2)$ and the $q$-Krawtchouk polynomials required for proving the main results of this paper concerning the Markov processes. We will first introduce $\U_q(\mathfrak{sl}_2)$ and give a representation of this algebra which has close connections with ASEP and dynamic ASEP. Then lastly, we will state some recurrence relations for $q$-Krawtchouk polynomials that will be useful later on.
\subsection{The algebra $\U_q(\mathfrak{sl}_2)$}
$\mathcal U_q := \U_q\big(\mathfrak{sl}_2\big)$ is the quantized universal enveloping algebra of the Lie algebra $\mathfrak{sl}_2$. This is the unital, associative, complex algebra generated by $K$, $K^{-1}$, $E$, and $F$, subject to the relations
\begin{align}
	K K^{-1} = 1 = K^{-1}K, \quad KE = qEK, \quad KF= q^{-1}FK, \quad EF-FE =\frac{K^2-K^{-2}}{q-q^{-1}}.\label{eq:UqRelations}
\end{align}
The Casimir element
\begin{equation} \label{eq:Casimir}
	\Om = \frac{q^{-1} K^2 +qK^{-2}-2}{(q^{-1}-q)^2} + EF= \frac{q^{-1}K^{-2}+qK^2-2}{(q^{-1}-q)^2} +FE
\end{equation}
is a central element of $\U_q$, i.e. $\Omega X=X\Omega$ for all $X\in\U_q$.
We use the $*$-structure on $\U_q$ which comes from the Lie algebra $\mathfrak{su}(2)$. This is the anti-linear involution defined on the generators by
\[
K^*=K, \quad E^*=F, \quad F^* = E, \quad (K^{-1})^* = K^{-1}.
\]
Note that the Casimir element is self-adjoint in $\U_q$, i.e.~$\Om^*=\Om$.\\ 

\noindent The comultiplication $\De:\U_q\to\U_q\otimes\U_q$ is a $*$-algebra homomorphism defined on the generators by
\begin{equation} \label{eq:comult}
	\begin{split}
		\De(K) = K \tensor K,&\quad  \De(E)= K \tensor E + E \tensor K^{-1}, \\
		\De(K^{-1}) = K^{-1} \tensor K^{-1},&\quad  \De(F) = K \tensor F + F \tensor K^{-1}.
	\end{split}
\end{equation}
The self-adjoint element $\Delta(\Omega)$ will be the generator of our Markov processes. It follows from \eqref{eq:Casimir} and \eqref{eq:comult} that
\begin{equation} \label{eq:De(OM)}
	\begin{split}
		\De(\Om) = &\frac{1}{(q^{-1}-q)^{2}} \big[ q( K^2 \tensor K^2 )+ q^{-1} (K^{-2} \tensor K^{-2}) -2 (1\tensor 1) \big] \\
		&+ K^2 \tensor FE +KE\tensor FK^{-1} + FK \tensor K^{-1}E + FE \tensor K^{-2}.
	\end{split}
\end{equation}
Another important element in $\U_q$ we need, is the twisted primitive element $Y_\rho$, defined by
\[
Y_\rho = q^{\frac12} EK + q^{-\frac12}FK - [\rho]_q(K^2-1), \quad \rho\in\R.
\]
This satisfies
\begin{align}
	\De(Y_\rho) = K^2 \tensor Y_\rho + Y_\rho \tensor 1, \qquad Y_\rho^* =Y_\rho.\label{eq:coproductYrho}
\end{align}
In Lie algebras, the comultiplication of an element $X$ is defined by $\Delta(X)=1\otimes X + X\otimes 1$. Note that $Y_\rho$ almost satisfies this. The $K^2$ in the above equation will cause the asymmetry of the process.

\subsection{A representation of $\U_q$ related to ASEP} \label{subsec:reprUqASEP} The generator of $\ASEP{q,\vec{N}}$ can be realized by sums of coproducts $\Delta(\Omega)$ in an $M$-fold tensor product representation of $\U_q$. Let us introduce the representations we need. Define $H_k$ to be the $(N_k+1)$-dimensional Hilbert space of (continuous) functions $f:\{0,1,...,N_k\}\to\C$ with inner product induced by the orthogonality measure \eqref{eq:orthokrawtchoukASEP} of the $q$-Krawtchouk polynomials,
\[
\langle f,g \rangle_{H_k} = \sum_{n=0}^{N_k} f(n) \overline{g(n)} w(n;N_k;q)q^{-2nu_k(\vec{N})},
\]
where $w(n;q,N_k)$ can be found in \eqref{eq:orthokrawtchoukASEP} and the factor
\[
	u_k(\vec{N})= -\half N_k + \sum_{j=1}^{k} N_j
\]
is present to prevent needing ground state transformations, as is used in e.g.~\cite{CGRS}, later on. Our duality functions will be elements of the $M$-fold tensor product of $H_k$,
\[
	H=H_1 \otimes H_2 \otimes  \ldots \otimes H_M.
\]
We will interpret $H$ as functions on states $\eta$ in our state space $X= \{0,\ldots,N_1\}\times \cdots \times \{0,\ldots,N_M\}$. Moreover, note that 
\begin{align*}
	u(\eta;\vec{N})= -2\sum_{k=1}^M \eta_k u_k(\vec{N}),
\end{align*}
where $u$ is given by \eqref{eq: u(eta,N}, 
hence the measure from the inner product of $H$, corresponds to $w(\eta;q,\vec{N})$ from \eqref{eq:Weightfunction w}: 
\begin{align}
	\langle f,g\rangle_H = \sum_\eta f(\eta)\overline{g(\eta)}w(\eta;\vec{N};q).\label{eq:HilbertH}
\end{align}

Let $B(H_k)$ be the space of linear operators on $H_k$ and $\pi_k:\U_q\to B(H_k)$ the $*$-representation defined by
\begin{equation} \label{eq:representation}
	\begin{split}
		[\pi_k(K)f](n ) &= q^{n-\frac12 N_k} f(n), \\
		[\pi_k(E)f](n) &= q^{u_k(\vec{N})} [n]_q f(n-1), \\
		[\pi_k(F) f](n) & = q^{-u_k(\vec{N})} [N_k-n]_q f(n+1),\\
		[\pi_k(K^{-1}) f](n) &= q^{\frac12N_k-n}f(n).
	\end{split}
\end{equation}
One can easily verify that this is a $*$-representation, i.e. $\pi_k(X^*)=\pi_k(X)^*$ for all $X\in \U_q$, by checking this for the generators $K,K^{-1},E$ and $F$.\\

Denote by $\pitensor$ the tensor product representation of $\pi_k$ and $\pi_{k+1}$,
\[
\pitensor(X\otimes Y)= \pi_k(X)\otimes \pi_{k+1}(Y),\qquad X,Y\in \U_q.
\] 
A direct calculation shows that the representation $\pitensor$ of $\Delta(\Omega)$ is the generator of $\ASEP{q,\vec{N}}$ for sites $k$ and $k+1$ plus some constant, i.e.
\begin{align*}
	\begin{split}[\pitensor (\Delta(\Omega))f](\eta) =& c^+_k [f(\eta^{k,k+1})-f(\eta)] + c^-_{k+1} [f(\eta^{k+1,k})-f(\eta)] \\
		&+  \big[\tfrac12(N_k+N_{k+1}+1)\big]_q^2f(\eta).\end{split}
\end{align*}
Therefore, if we subtract the constant and sum over $k$, we get the generator of $\ASEP{q,\vec{N}}$:
\begin{equation} \label{eq:ASEPCasimir2}
\genASEP= \sum_{k=1}^{M-1} \pitensor \Big(\Delta(\Omega)- \big[\tfrac12(N_k+N_{k+1}+1)\big]_q^2 \Big).
\end{equation}
It now immediately follows that $w$ is a reversible measure for $\ASEP{q,\vec{N}}$: 
\begin{proof}[Alternative proof of Theorem \ref{Thm:reversiblemeasures}(i)]
	Since the $\pi_k$ are $*$-representations, $\De$ is a $*$-homomorphism and $\Omega^*=\Omega$, we have that $\genASEP$ is self-adjoint with respect to the measure $w$. Therefore, $w$ is a reversible measure for $\ASEP{q,\vec{N}}$.
\end{proof}
\subsection{Properties of $q$-Krawtchouk polynomials}
In this subsection, we introduce three different recurrence relations for the $q$-Krawtchouk polynomials that we need later on. For convenience, write
\begin{align*}
	k(n,x;\rho)= k(n,x;\rho,N;q)\label{eq:dualqkrawtchouknosite}.
\end{align*}
The first recurrence relation we give is a standard three-term recurrence relation, the other two are very similar but more dynamic in the sense that the parameter $\rho$ will be changing as well. \\

All sets of orthogonal polynomials $\{p_n(x)\}$, where $n$ is the degree of the polynomial and $x$ the variable, satisfy a three-term recurrence relation in the degree of the polynomial of the form
\[
xp_n(x)=A(n) p_{n+1}(x) + B(n) p_n(x) + C(n) p_{n-1}(x).
\]  
At the moment we consider the $q$-Krawtchouk polynomials as having degree $x$ in the variable $q^{-2n}$. Then its three-term recurrence relation is given by,
\begin{align}
	q^{-2n}k(n,x;\rho) &=  a_{-1}(x) k(n,x-1;\rho) +a_0(x) k(n,x;\rho) +a_1(x) k(n,x+1;\rho),\label{eq:qdifkrawtchouk}
\end{align} 
The coefficients can be found in Appendix \ref{app:coefficients}, see also \cite[\S14.17]{KLS}.
Moreover, the $q$-Krawtchouk polynomials satisfy two more relations where besides $x$, the parameter $\rho$ will be changing as well. 
\begin{lemma}\label{lem:qdifkrawtchouk}
	The $q$-Krawtchouk polynomials satisfy $q$-difference equations of the form
	\begin{align}
		\begin{split}q^{-2n}k(n,x;\rho) =&  a_{-2,2}(x) k(n,x-2;\rho+2)+a_{-1,2}(x) k(n,x-1;\rho+2) \\
			&+a_{0,2}(x) k(n,x;\rho+2) ,\end{split}\label{eq:dualqkrawtchoukxrho+}\\
		\begin{split} q^{-2n}k(n,x;\rho) =&  a_{0,-2}(x) k(n,x;\rho-2)  +a_{1,-2}(x) k(n,x+1;\rho-2) \\
			&+a_{2,-2}(x) k(n,x+2;\rho-2).\end{split}\label{eq:dualqkrawtchoukxrho-}
	\end{align}
	Explicit expressions for $a_{j,m}(x)$ can be found in Appendix \ref{app:coefficients}. 
\end{lemma}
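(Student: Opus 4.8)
**The plan is to derive the two dynamic $q$-difference equations \eqref{eq:dualqkrawtchoukxrho+} and \eqref{eq:dualqkrawtchoukxrho-} from the known three-term recurrence \eqref{eq:qdifkrawtchouk}, the structural symmetries of the $q$-Krawtchouk polynomials, and the representation theory of $\U_q$ that has already been set up.**

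My preferred route is algebraic, exploiting the realization in Section \ref{subsec:reprUqASEP}. Recall that the duality function $k(n,x;\rho)$ arises (up to the coefficient $c_{\mathrm k}$) as a $q$-Krawtchouk polynomial that intertwines the action of $\pi_k$ on the degree variable $n$ with an action on the pair $(x,\rho)$; this is exactly the mechanism by which the generator of generalized ASEP gets transferred to generalized dynamic ASEP. The left-hand side $q^{-2n}k(n,x;\rho)$ is, up to normalization, the action of the diagonal operator $\pi_k(K^{-2})$ (or a suitable shift thereof, since $[\pi_k(K)f](n)=q^{n-\frac12 N_k}f(n)$) on the degree variable. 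The point is that $K^2$ and $K^{-2}$ are precisely the elements that, under $\De$, produce shifts in the boundary parameter $\rho$ (which is built into $h^+_{k+1}$ via $Y_\rho$ and the $K^2\tensor Y_\rho$ term of \eqref{eq:coproductYrho}). So the plan is: first, I would write $q^{-2n}$ as the eigenvalue of a diagonal element in the representation; second, I would commute this element through the intertwining relation defining $k(n,x;\rho)$ as an eigenfunction of (a realization of) the twisted primitive element $Y_\rho$; third, I would read off that the transferred action is a three-term combination of $k(n,x;\rho\pm 2)$ with $x$ shifted, yielding \eqref{eq:dualqkrawtchoukxrho+} and \eqref{eq:dualqkrawtchoukxrho-}. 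The two relations, one with $\rho\mapsto\rho+2$ and one with $\rho\mapsto\rho-2$, correspond to the two choices $K^2$ versus $K^{-2}$, and are interchanged by the $*$-structure ($E^*=F$), so I would prove one and obtain the other by this symmetry together with the $q\leftrightarrow q^{-1}$ invariance.

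Alternatively, and as a concrete fallback, one can prove both identities by a direct $q$-hypergeometric computation. Writing $K_n(x;c,N;q^2)$ as the terminating $\rphisempty{3}{2}$ from the definition preceding \eqref{eq:1siteduality}, the quantity $q^{-2n}$ multiplies the series term-by-term; absorbing $q^{-2n}$ into the factor $(q^{-n};q)_j$ via the standard manipulation $q^{-2n}=q^{-2n+2j}\cdot q^{-2j}$ reindexes the sum and produces, after using the contiguous relations for $\rphisempty{3}{2}$ series, a linear combination of the same series with the parameter $c=q^{2\rho}$ replaced by $q^{2\rho\pm4}$ (i.e.\ $\rho\mapsto\rho\pm2$) and the variable $x$ shifted by $0,\pm1,\pm2$. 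Matching the two normalizations $c_{\mathrm k}(n,\rho,N)$ on both sides then fixes the coefficients $a_{j,m}(x)$, which I would record in Appendix \ref{app:coefficients} rather than reproduce here.

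The main obstacle I anticipate is bookkeeping rather than conceptual: the coefficient $c_{\mathrm k}(n,\rho,N)$ depends on $\rho$ (and on $N$), so shifting $\rho\mapsto\rho\pm2$ changes the normalizing prefactor, and the $a_{j,m}(x)$ must absorb the ratios $c_{\mathrm k}(n,\rho,N)/c_{\mathrm k}(n,\rho\pm2,N)$ to keep the identity in the normalized functions $k$. Verifying that these ratios depend only on $x$ (and parameters) and not on the degree $n$ — which is exactly what is needed for \eqref{eq:dualqkrawtchoukxrho+}--\eqref{eq:dualqkrawtchoukxrho-} to be genuine $q$-difference equations in $x$ with $n$-independent coefficients — is the crux. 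I expect this to follow from the explicit form of $c_{\mathrm k}$ in Appendix \ref{app:overview}, whose $n$-dependence is a single power $q^{\#n}$ times an $n$-independent factor, so that the troublesome $n$-dependence cancels against the $q^{-2n}$ on the left-hand side; confirming this cancellation cleanly is the step I would carry out most carefully.
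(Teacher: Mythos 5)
Your concrete fallback route is, in essence, the paper's own proof; your preferred algebraic route is not. The paper proceeds by first verifying, through a direct computation, a single \emph{two-term} contiguous relation that shifts the dynamic parameter by one unit,
\[
q^{-n}k(n,x;\rho)=\frac{1+q^{2x+2\rho-2N}}{1+q^{4x+2\rho-2N}}\,k(n,x;\rho+1)+\frac{1-q^{-2x}}{1+q^{2N-4x-2\rho}}\,k(n,x-1;\rho+1),
\]
i.e.~\eqref{eq:dualqkrawtchoukxrho}, and then iterating it once to obtain \eqref{eq:dualqkrawtchoukxrho+}; the second relation \eqref{eq:dualqkrawtchoukxrho-} is not recomputed but deduced from the first via the reflection symmetry $k(n,x;-\rho,N;q)=(-1)^nk(n,N-x;\rho,N;q)$ of \eqref{eq:KrawtchoukRhoto-Rho}, which follows from the transformations (III.6)--(III.7) of \cite{GR}, after substituting $\rho\to-\rho$ and $N-x\to x$. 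Your one-shot absorption of $q^{-2n}$ into the terminating $\rphisempty{3}{2}$ would land at the same identities, just with heavier bookkeeping than the paper's two clean steps, and you would still want a symmetry (the reflection above, or the orthogonality argument below) to avoid doing the computation twice. The ``crux'' you flag does resolve exactly as you expect: since $c_{\mathrm k}(n;\rho,N)=(-1)^nq^{-n\rho}q^{\frac12 n(N-1)}$ (Appendix \ref{app:overview}), one has $c_{\mathrm k}(n;\rho\pm2,N)=q^{\mp 2n}c_{\mathrm k}(n;\rho,N)$, so the factor $q^{-2n}$ on the left is absorbed entirely by the renormalization, and both relations are equivalent to $n$-free connection formulas between the bare polynomials $K_n(\,\cdot\,;q^{2\rho},N;q^2)$ and $K_n(\,\cdot\,;q^{2\rho\pm4},N;q^2)$.

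The genuine gap is in your primary route. The Leonard-pair/AW(3) structure of the pair $(\pi_k(Y_\rho),\pi_k(K^{-2}))$ only yields the static three-term relation \eqref{eq:qdifkrawtchouk}, in which $\rho$ is fixed; the lemma instead connects eigenfunctions of the \emph{different} operators $Y_\rho$ and $Y_{\rho\pm2}$, and you never exhibit an identity in $\U_q$ forcing the expansion of $q^{-2n}k(n,x;\rho)$ in the $Y_{\rho\pm2}$-eigenbasis to be band-limited to three terms --- one would have to control $Y_{\rho+2}-Y_\rho=-([\rho+2]_q-[\rho]_q)(K^2-1)$, and that is precisely where the computation hides, so ``commute through the intertwining relation'' is a hope, not a step. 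Moreover, the claim that the two relations correspond to ``$K^2$ versus $K^{-2}$'' and are interchanged by the $*$-structure together with $q\leftrightarrow q^{-1}$ invariance is off target: both relations carry the same diagonal factor $q^{-2n}$ on the left and differ only in which shifted basis one expands in, and the functions $k$ are not invariant under $q\to q^{-1}$. What does legitimately transfer \eqref{eq:dualqkrawtchoukxrho+} into \eqref{eq:dualqkrawtchoukxrho-} --- besides the paper's reflection --- is self-adjointness of multiplication by $q^{-2n}$ combined with the orthogonality relations \eqref{eq:orthogonality 1-site k}: applying \eqref{eq:dualqkrawtchoukxrho+} at parameter $\rho-2$ and comparing the matrix elements $\sum_n q^{-2n}k(n,x;\rho-2)k(n,y;\rho)w(n)$ computed both ways determines the coefficients $a_{j,-2}$ in terms of the $a_{-j,2}$ and the weights $w,W$; but that is an argument about the multiplication operator and the orthogonality measures, not about $E^*=F$.
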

\noindent Note that the coefficient $a_{j,m}(x)$ is in front of $k(n,x+j,\rho+m)$.
\begin{proof}
	A direct computation shows that
	\begin{align}
		q^{-n}k(n,x;\rho)=\frac{1+q^{2x+2\rho-2N}}{1+q^{4x+2\rho-2N}}k(n,x;\rho+1) + \frac{1-q^{-2x}}{1+q^{2N-4x-2\rho}}k(n,x-1;\rho+1).\label{eq:dualqkrawtchoukxrho}
	\end{align}
	Multiplying both sides by $q^{-n}$ and then applying \eqref{eq:dualqkrawtchoukxrho} to the right-hand side gives \eqref{eq:dualqkrawtchoukxrho+}. Using the identity \cite[(III.6) and (III.7)]{GR}
	\[\rphis{3}{2}{q^{-n}, q^{-x}, -cq^{x-N}}{q^{-N},0}{q,q}=(-c)^n\rphis{3}{2}{q^{-n}, q^{x-N}, -c^{-1}q^{-x}}{q^{-N},0}{q,q}\]
	for $q$-hypergeometric series, we obtain
	\begin{align}
	k(n,x;-\rho,N;q)=(-1)^nk(n,N-x,\rho,N;q).\label{eq:KrawtchoukRhoto-Rho}
	\end{align}
	If we replace $\rho$ by $-\rho$ in \eqref{eq:dualqkrawtchoukxrho+} and apply the above symmetry, one obtains \eqref{eq:dualqkrawtchoukxrho-} after replacing $N-x$ by $x$.
\end{proof}

\section{Algebraic construction of generalized dynamic ASEP} \label{sec:ConstructionDynASEP}
\subsection{Constructing the generator}
Constructing the generator of $\RASEP{q,\vec{N},\rho}$ is done by transferring the action of $\pitensor(\De(\Om))$ from the $\eta$-variable to the $\xi$-variable using the $q$-Krawtchouk polynomials. We will proceed in the following three steps. 
\begin{enumerate}[label=(\arabic*)]
	\item In their usual action, we can let the operators $\pitensor(\Delta(Y_{h^{+}_{k+2}(\xi)}))$ and $\pitensor(\De(K^{-2}))$ act on the $\eta$ variable of the duality function $K_{\mathsf{R}}(\eta,\xi)$ given by \eqref{eq:duality function K} as a nested product of $q$-Krawtchouk polynomials. We will show that we can transfer these actions to be exclusively depending on the $\xi$ variable. This is the content of Lemma \ref{lem:etatoxi}.
	\item Then we show that $\Omega$ can be written in terms of $Y_\rho$ and $K^{-2}$, i.e. the latter two elements are `building blocks' for the Casimir $\Omega$. Consequently, $\Delta(\Omega)$ can be written in terms of $\Delta(Y_\rho)$ and $\De(K^{-2})$. 
	\item In the last step, we explicitly compute the action of $\pitensor(\Delta(\Omega))$ on $K_{\mathsf{R}}(\eta,\xi)$ in the $\xi$ variable by combining the previous two steps. This will give the generator on sites $k$ and $k+1$ of $\RASEP{q,\vec{N},\rho}$, which is summarized in Theorem \ref{Thm:dynamicASEP}.
\end{enumerate} 
For step (1), we will show that we can transfer the $\eta$-dependent actions 
\begin{align*}
	[\pitensor(\De(Y_{h^{+}_{k+2}(\xi)}))K_{\mathsf{R}}(\cdot,\xi)](\eta)\qquad \text{and}\qquad [\pitensor(\De(K^{-2}))K_{\mathsf{R}}(\cdot,\xi)](\eta)
\end{align*} 
to the $\xi$ variable.
\begin{lemma}\label{lem:etatoxi}
	The operator $\pitensor(\De(Y_{h^{+}_{k+2}(\xi)}))$ acts as a multiplication operator on $K_{\mathsf{R}}(\eta,\xi)$,
	\begin{align}
		[\pitensor(\De(Y_{h^{+}_{k+2}(\xi)}))K_{\mathsf{R}}(\cdot,\xi)](\eta)=&\ \Big([h^{+}_{k+2}(\xi)]_q-[h^{+}_{k}(\xi)]_q\Big)K_{\mathsf{R}}(\eta,\xi).\label{eq:lemetatoxi1}
	\end{align}
	The operator $\pitensor(\De(K^{-2}))$ is a 9-term operator for $K_{\mathsf{R}}(\eta,\xi)$ in the $\xi$-variable,
	\begin{align} 
		\begin{split}q^{-N_k-N_{k+1}}\big[\pitensor(\De(K^{-2}))K_{\mathsf{R}}(\cdot,\xi)\big](\eta) =&\ a_{-1}(\xi_{k+1}) \sum_{j=0}^2 a_{j,-2}(\xi_{k})K_{\mathsf{R}}(\eta,\xi+j\epsilon_k-\epsilon_{k+1}) \\
			&+a_0(\xi_{k+1}) \sum_{j=-1}^1 a_{j}(\xi_{k})K_{\mathsf{R}}(\eta,\xi+j\epsilon_{k})\\
			&+a_1(\xi_{k+1}) \sum_{j=-2}^0 a_{j,2}(\xi_{k})K_{\mathsf{R}}(\eta,\xi+j\epsilon_{k}+\epsilon_{k+1}),\end{split} \label{eq:lemetatoxi2}
	\end{align}
	where $\epsilon_j$ is the standard unit vector in $\R^M$ with 1 as $j$-th element.
\end{lemma}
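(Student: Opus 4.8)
The plan is to prove both identities by reducing the two-site operators to single-site statements about the $q$-Krawtchouk factors of $K_{\mathsf R}$, and then recombining them via the coproduct formulas \eqref{eq:coproductYrho} for $Y_\rho$ and $\De(K^{-2})=K^{-2}\tensor K^{-2}$. Throughout I use that the $\eta$-only prefactor $q^{-\frac12 u(\eta;\vec N)}=\prod_k q^{\eta_k u_k(\vec N)}$ and the factors of $K_{\mathsf R}$ on sites $\neq k,k+1$ are untouched by $\pitensor$, so they factor out of every computation.

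For \eqref{eq:lemetatoxi2} I would first note that $\pitensor(\De(K^{-2}))=\pi_k(K^{-2})\tensor\pi_{k+1}(K^{-2})$ acts diagonally in the $\eta$-variable by \eqref{eq:representation}, as multiplication by $q^{N_k+N_{k+1}}q^{-2\eta_k-2\eta_{k+1}}$, so that $q^{-N_k-N_{k+1}}\pitensor(\De(K^{-2}))$ is just multiplication by $q^{-2\eta_k}q^{-2\eta_{k+1}}$. I then transfer this $\eta$-multiplication to a $\xi$-difference operator one factor at a time. Applying the three-term recurrence \eqref{eq:qdifkrawtchouk} with $n=\eta_{k+1}$, $x=\xi_{k+1}$, $\rho=h^{+}_{k+2}(\xi)$ turns $q^{-2\eta_{k+1}}$ into the shifts $\xi_{k+1}\mapsto\xi_{k+1}+i$, $i\in\{-1,0,1\}$, with coefficients $a_i(\xi_{k+1})$. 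The crucial bookkeeping point is that shifting $\xi_{k+1}$ by $i$ shifts the site-$k$ parameter $h^{+}_{k+1}(\xi)=h^{+}_{k+2}(\xi)+2\xi_{k+1}-N_{k+1}$ by $2i$. Hence, for each value of $i$, I convert $q^{-2\eta_k}k(\eta_k,\xi_k;h^{+}_{k+1})$ into $q$-Krawtchouk polynomials with parameter $h^{+}_{k+1}+2i$ using the matching recurrence: \eqref{eq:qdifkrawtchouk} for $i=0$, \eqref{eq:dualqkrawtchoukxrho+} for $i=+1$, and \eqref{eq:dualqkrawtchoukxrho-} for $i=-1$. Reassembling the factors reproduces exactly the three groups of terms in \eqref{eq:lemetatoxi2}.

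For \eqref{eq:lemetatoxi1} I would use $\pitensor(\De(Y_{h^{+}_{k+2}}))=\pi_k(K^2)\tensor\pi_{k+1}(Y_{h^{+}_{k+2}})+\pi_k(Y_{h^{+}_{k+2}})\tensor 1$ from \eqref{eq:coproductYrho}. The key single-site input is that the $q$-Krawtchouk factors are eigenfunctions of the twisted primitive element: with \emph{matched} parameters, $\pi_j(Y_\rho)$ acts on $q^{\eta_j u_j(\vec N)}k(\,\cdot\,,\xi_j;\rho)$ as multiplication by $[\rho]_q-[\rho+2\xi_j-N_j]_q$. This I would verify from the explicit action \eqref{eq:representation}, which renders $\pi_j(Y_\rho)$ a three-term operator in $\eta_j$ whose $u_j$-prefactor cancels, so that the eigenvalue equation becomes precisely a (dual) $q$-difference equation for the $q$-Krawtchouk polynomials. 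On site $k+1$ the parameters already match ($h^{+}_{k+2}$), giving eigenvalue $[h^{+}_{k+2}]_q-[h^{+}_{k+1}]_q$; on site $k$ the element carries $h^{+}_{k+2}$ while the polynomial carries $h^{+}_{k+1}$, so I first rewrite, using $Y_\sigma-Y_{\sigma'}=([\sigma']_q-[\sigma]_q)(K^2-1)$,
\[
\pi_k(Y_{h^{+}_{k+2}})=\pi_k(Y_{h^{+}_{k+1}})-\big([h^{+}_{k+2}]_q-[h^{+}_{k+1}]_q\big)\big(\pi_k(K^2)-1\big),
\]
which restores matched parameters for the $Y_{h^{+}_{k+1}}$ part (eigenvalue $[h^{+}_{k+1}]_q-[h^{+}_k]_q$) at the cost of extra $\pi_k(K^2)$-terms. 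Adding the two coproduct contributions, the $\eta$-dependent pieces $q^{2\eta_k-N_k}\big([h^{+}_{k+2}]_q-[h^{+}_{k+1}]_q\big)$ coming from the $\pi_k(K^2)$ factors cancel, and the eigenvalue telescopes to $[h^{+}_{k+2}]_q-[h^{+}_k]_q$, as claimed.

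The main obstacle is establishing the single-site diagonalization of $\pi_j(Y_\rho)$ by the $q$-Krawtchouk polynomials used in \eqref{eq:lemetatoxi1}; once this eigenfunction property and its eigenvalue are in hand, the remaining argument is the clean algebraic telescoping above, whose whole point is that the asymmetry-inducing $K^2$ in $\De(Y_\rho)$ is exactly what makes the $\eta$-dependence disappear. For \eqref{eq:lemetatoxi2} the only delicate point is the combinatorial bookkeeping of the nested parameter shift, that is, tracking how a shift in $\xi_{k+1}$ propagates into the parameter of the site-$k$ factor and thereby selects among the three recurrences \eqref{eq:qdifkrawtchouk}, \eqref{eq:dualqkrawtchoukxrho+}, \eqref{eq:dualqkrawtchoukxrho-}.
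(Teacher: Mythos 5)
Your proposal is correct and follows essentially the same route as the paper's own proof: for \eqref{eq:lemetatoxi2} you apply the three-term recurrence \eqref{eq:qdifkrawtchouk} to the site-$(k{+}1)$ factor and then select among \eqref{eq:qdifkrawtchouk}, \eqref{eq:dualqkrawtchoukxrho+}, \eqref{eq:dualqkrawtchoukxrho-} on site $k$ according to how the shift in $\xi_{k+1}$ propagates into $h^{+}_{k+1}(\xi)$, exactly as in the paper, and for \eqref{eq:lemetatoxi1} your rewriting via $Y_\sigma-Y_{\sigma'}=([\sigma']_q-[\sigma]_q)(K^2-1)$ is the same algebraic manipulation the paper performs (in Appendix \ref{app:eigenfunctions}) when it absorbs the eigenvalue of the site-$(k{+}1)$ factor into a twisted primitive element with shifted parameter. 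The only cosmetic differences are that you argue directly at sites $k,k+1$ for general $M$ rather than treating $M=2$ first and extending inductively, and that you verify the single-site eigenfunction property \eqref{eq:YrhoKrawtchoukEigenfunction} from the explicit representation \eqref{eq:representation} where the paper cites the literature.
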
 
\begin{proof}
	First, let us define for this proof only,
	\begin{align*}
		k(\eta_k,\xi_k;\rho)= q^{\eta_k u_k(\vec{N})}k(\eta_k,\xi_k;\rho,N_k;q),
	\end{align*}
	then
	\begin{align*}
		K_{\mathsf{R}}(\eta,\xi)=\prod_{k=1}^M k(\eta_k,\xi_k;h^{+}_{k+1}(\xi))
	\end{align*}
	is the duality function between $\ASEP{q,\vec{N}}$ and $\RASEP{q,\vec{N},\rho}$ from \eqref{eq:duality function K}. Moreover, by multiplying both sides of the recurrence relation \eqref{eq:qdifkrawtchouk} and the two recurrence relations from Lemma \ref{lem:qdifkrawtchouk} by the factor $q^{\eta_k u_k(\vec{N})}$, we see that these three relations hold equally well for $k(\eta_k,\xi_k,\rho)$ defined above.
	For the moment, let us fix the number of sites $M$ to be 2. Later on, we will generalize to $M\in\N$ sites by starting from the two rightmost sites and then inductively working towards the left. The (2-site) duality function $K_{\mathsf{R}}(\eta,\xi)$  is now given by
	\begin{align}
		\begin{split}
			K_{\mathsf{R}}(\eta,\xi)=& q^{-\half u(\eta;\vec{N})} k(\eta_1,\xi_1;N_1,h^{+}_{2}(\xi);q)k(\eta_{2},\xi_{2};N_2,\rho;q)\\
			=& k(\eta_1,\xi_1;h^{+}_{2}(\xi))k(\eta_{2},\xi_{2};\rho).
		\end{split}\label{eq:twositeduality}
	\end{align}
	Note that the right $q$-Krawtchouk polynomial only depends on site $2$, but that the left one depends on sites $1$ and $2$ since $h^{+}_{2}(\xi)$ contains $\xi_2$ and $N_2$. Later on, this will be crucial since this will allow us to work inductively from right to left when extending to $M\in\N$ sites.\\
	
	We will now show the following two things.
	\begin{enumerate}[label=(\roman*)]
		\item $K_{\mathsf{R}}(\eta,\xi)$ are eigenfunctions of $\pitensortwo(\De(Y_\rho))$ with eigenvalue $\mu=[\rho]_q-[h^+_{1}(\xi)]_q$. Hence $\pitensortwo(\De(Y_\rho))$ acts as multiplication by $\mu$ on $K_{\mathsf{R}}(\eta,\xi)$.
		\item Using the three-term recurrence relation \eqref{eq:qdifkrawtchouk} and the $q$-difference equations from Lemma \ref{lem:qdifkrawtchouk}, we can show that $\pitensortwo(\De(K^{-2}))$ acts as a $9$-term operator in the $\xi$-variable on $K_{\mathsf{R}}(\eta,\xi)$.
	\end{enumerate}
	For (i), we use that the $q$-Krawtchouk polynomials $k(\cdot,\xi_k;\rho): \{0,...,N_k\}\to \R$ are eigenfunctions of $\pi_k(Y_\rho)$,
	\begin{align}
		[\pi_k(Y_\rho)k(\cdot,\xi_k;\rho)](\eta_k)=([\rho]_q-[2\xi_k-N_k+\rho]_q)k(\eta_k,\xi_k;\rho). \label{eq:YrhoKrawtchoukEigenfunction}
	\end{align}
	Using this and the explicit expression \eqref{eq:coproductYrho} for $\Delta(Y_\rho)$, one can show that the 2-site duality function $K_{\mathsf{R}}(\eta,\xi)$ is an eigenfunction of $\pitensortwo(\De(Y_\rho))$,
	\begin{align}
		\begin{split} [\pitensortwo (\Delta(Y_\rho))K_{\mathsf{R}}(\cdot,\xi)](\eta)
			&=([\rho]_q-[\rho+2(\xi_1+\xi_2)-(N_1+N_2)]_q)K_{\mathsf{R}}(\eta,\xi) \\
			&= ([\rho]_q-[h^+_{1}(\xi)]_q)K_{\mathsf{R}}(\eta,\xi).\end{split}\label{eq:DeltaYrhoeigenfunctions}
	\end{align}
	See Appendix \ref{app:eigenfunctions} for more details. Thus, $\pitensortwo(\De(Y_\rho))$ can also act on $K_{\mathsf{R}}(\eta,\xi)$ by multiplication by the ($\eta$-independent) eigenvalue $[\rho]_q-[h^+{1}(\xi)]_q$. Note that everything goes entirely similar if we take sites $k,k+1$ instead of sites $1,2$, and $h^+_{k}(\xi),h^+_{k+1}(\xi), h^+_{k+2}(\xi)$ instead of $h^+_1(\xi), h^+_2(\xi)$ and $\rho$, proving \eqref{eq:lemetatoxi1}.\\
	\\
	\noindent The verification of (ii) is more subtle. By \eqref{eq:representation}, the operator $\pi_k(K^{-2})$ is multiplication by $q^ {N_k-2\eta_k}$. Therefore, using $\Delta(K^{-2})=K^{-2}\otimes K^{-2}$, 
	\begin{align}
		\begin{split}
			[\pitensortwo(\Delta(K^{-2}))K_{\mathsf{R}}(\cdot,\xi)](\eta)=&q^{N_1+N_{2}}q^{-2\eta_1-2\eta_{2}}K_{\mathsf{R}}(\eta,\xi) \\
			=&q^{N_1+N_2} q^{-2\eta_1} k(\eta_1,\xi_1;h^{+}_{2}(\xi))q^{-2\eta_{2}}k(\eta_2,\xi_2;\rho).
		\end{split}
		\label{eq:DeK^-2onKrawtchouk}
	\end{align}  
	We can use the $q$-difference equation \eqref{eq:qdifkrawtchouk} and \eqref{eq:dualqkrawtchoukxrho+}, \eqref{eq:dualqkrawtchoukxrho-} from Lemma \ref{lem:qdifkrawtchouk} to show that $q^{-2\eta_k}$ can act in three different ways on the $\xi_k$ variable of $k(\eta_k,\xi_k;\rho)$,
	\begin{alignat}{2}
		q^{-2\eta_k}k(\eta_k,\xi_k;\rho)=&  a_{-1} k(\eta_k,\xi_k-1;\rho) +a_0 k(\eta_k,\xi_k;\rho) +a_1 k(\eta_k,\xi_{k}+1;\rho),&& \label{eq:difqkraw0} \\
		= &a_{-2,2} k(\eta_k,\xi_k-2;\rho+2) +a_{-1,2} k(\eta_k,\xi_k-1;\rho+2) +a_{0,2} k(\eta_k,\xi_k;\rho+2),\hspace{-0.19cm}&&\label{eq:difqkrawrho+}\\
		=  &a_{0,-2} k(\eta_k,\xi_k;\rho-2)  +a_{1,-2} k(\eta_k,\xi_k+1;\rho-2) +a_{2,-2} k(\eta_k,\xi_k+2;\rho-2),\hspace{-0.19cm}&&\label{eq:difqkrawrho-}
	\end{alignat}
	where the coefficients $a_j$ and $a_{j,m}$ depend on $\xi_k$, $N_k$ and $\rho$ (and not on $\eta$) and can be found in Appendix \ref{app:coefficients}.\\
	
	\noindent The naive approach here is to use the standard three-term recurrence relation \eqref{eq:difqkraw0} for both 
	\begin{align*}
		q^{-2\eta_1}k(\eta_1,\xi_1;h^{+}_{2}(\xi)) \qquad \text{and} \qquad  q^{-2\eta_{2}}k(\eta_2,\xi_2;\rho)
	\end{align*}
	in \eqref{eq:DeK^-2onKrawtchouk}. However, this would not work since the variable $\xi_{2}$ of the right one is part of the parameter $h^+_{2}(\xi)$ of the left one. To see this, apply \eqref{eq:difqkraw0} for `$q^{-2\eta_{2}}$' to the right $q$-Krawtchouk polynomial,
	\begin{align}
		\begin{split}
			q^{-2\eta_1-2\eta_{2}}K_{\mathsf{R}}(\eta,\xi) =&\, q^{-2\eta_1}k(\eta_1,\xi_1,h^+_{2}(\xi))\big[a_{-1}(\xi_2) k(\eta_{2},\xi_{2}-1;\rho)\\
			&+a_0(\xi_2) k(\eta_{2},\xi_{2};\rho)+a_1(\xi_2) k(\eta_{2},\xi_{2}+1;\rho)\big].
		\end{split} \label{eq:qdifallterms}
	\end{align}
	Let us take a closer look at the term
	\begin{align}
		q^{-2\eta_1}k(\eta_1,\xi_1,h^+_{2}(\xi))a_{-1}(\xi_2) k(\eta_{2},\xi_{2}-1;\rho).\label{eq:qdif1}
	\end{align}
	Since `$h^{+}_{2}(\xi)$' contains a term `$2\xi_{2}$', and the variable of the right $q$-Krawtchouk polynomial is `$\xi_{2}-1$', we have to adjust `$h^+_{2}(\xi)$' by `$-2$' to obtain a product of $q$-Krawtchouk polynomials of the same form as $K_{\mathsf{R}}(\eta,\xi)$. That is, since the variable of the right polynomial is $\xi_2-1$, we have to adjust the left $q$-Krawtchouk polynomial so that we end up with terms 
	\begin{align*}
		K_\mathsf{R}(\eta,\xi_1+j,\xi_2-1)= k(\eta_1,\xi_1+j;h^{+}_{2}(\xi)-2)k(\eta_2,\xi_2-1;\rho), 
	\end{align*} 
	with $j\in\Z$. Therefore, we have to use the third relation \eqref{eq:difqkrawrho-} for the factor $q^{-2\eta_1}k(\eta_1,\xi_1,h^+_{2}(\xi))$ in \eqref{eq:qdif1} to obtain
	\begin{align*}
		\big(a_{0,-2}(\xi_1) k(\eta_1,\xi_1;h^{+}_{2}(\xi)-2)  &+a_{1,-2}(\xi_1) k(\eta_1,\xi_1+1;h^{+}_{2}(\xi)-2) \\
		&+a_{2,-2}(\xi_1) k(\eta_1,\xi_1+2;h^{+}_{2}(\xi)-2)\big)\times a_{-1}(\xi_2) k(\eta_{2},\xi_{2}-1;\rho).
	\end{align*}
	This is equal to
	\[
	a_{-1}(\xi_2)\big[ a_{0,-2}(\xi_1) K_{\mathsf{R}}(\eta,\xi_1,\xi_{2}-1) + a_{1,-2}(\xi_1) K_{\mathsf{R}}(\eta,\xi_1+1,\xi_{2}-1)a_{2,-2}(\xi_1) K_{\mathsf{R}}(\eta,\xi_1+2,\xi_{2}-1)\big].
	\]
	Let us now look at the other terms in \eqref{eq:qdifallterms}. Using the first equation \eqref{eq:difqkraw0} for the term 
	\[
	q^{-2\eta_1}k(\eta_1,\xi_1;h^+_{2}(\xi))a_0(\xi_2) k(\eta_{2},\xi_{2};\rho)
	\]
	and the second equation \eqref{eq:difqkrawrho+} for
	\[
	q^{-2\eta_1}k(\eta_1,\xi_1;h^+_{2}(\xi))a_1(\xi_2) k(\eta_{2},\xi_{2}+1;\rho),
	\]
	we obtain \eqref{eq:lemetatoxi2} for $k=1$.\\
	
	\noindent Let us now generalize this to $M\in\N$ sites and $\pitensor$ for $k=1,2,\ldots,M-1$. It is crucial to observe that in \eqref{eq:twositeduality} we can pick the parameter $\rho$ of the right $q$-Krawtchouk polynomial freely. However, its variable $\xi_{2}$ and dimension $N_{2}$ have to get into the `$\rho$' parameter of the left $q$-Krawtchouk polynomial by adding $2\xi_2-N_2$. Therefore, we can work from right to left inductively, as long as we change the `$\rho$' parameter accordingly every time. That is, we have to add $2\xi_{k+1}-N_{k+1}$ to the `$\rho$' parameter of the 1-site duality function of site $k$ each time we go from site $k+1$ to $k$,
	\[
	h^+_{k+1}(\xi)=h^+_{k+2}(\xi)+2\xi_{k+1}-N_{k+1}.
	\] 
	Doing this iteratively, we see that this agrees with the definition of our height function,
	\[
	h^+_{k}(\xi)=\rho+\sum_{j=k}^M (2\xi_j - N_j).\qedhere
	\]
\end{proof}
Now that we have finished step (1), we move on to step (2): expressing the Casimir $\Om$ in terms of $K^{-2}$ and $Y_\rho$. One can prove that 
\begin{align}
	\Omega = \frac{f(Y_\rho-[\rho]_q,K^{-2})}{(q+q^{-1})(q-q^{-1})^2}+ \frac{(q+q^{-1})K^{-2}}{(q-q^{-1})^2} + [\rho]_q\frac{Y_\rho-[\rho]_q}{q+q^{-1}} -\frac{ 2}{(q-q^{-1})^2} ,\label{eq:CasimirinYrhoK-2}
\end{align}
where $f:\U_q\times \U_q \to\U_q$ is the function given by
\begin{align}
	f(A,B)=(q^2+q^{-2})ABA - A^2B-BA^2.
\end{align}
This identity in $\U_q$ can be shown by either a direct calculation using the commutation relations \eqref{eq:UqRelations}, or by observing that \eqref{eq:CasimirinYrhoK-2} is actually a relation in the degenerate version of the Askey-Wilson algebra $\text{AW}(3)$ generated by $Y_\rho$ and $K^{-2}$, see e.g. \cite{GranZhed} or \cite[Theorem 2.2]{GroeneveltWagenaar}\footnote{Note that in \cite{GroeneveltWagenaar} the Casimir differs from $\Om$ by a scaling factor and an additive constant.}. Note that we can pick our parameter $\rho$ freely, in particular we can take $\rho=h^{+}_{k+2}(\xi)$. Now, taking the coproduct on both sides we obtain
\begin{align}
	\De(\Omega) = \frac{f(\De(Y_\rho)-[\rho]_q,\De(K^{-2}))}{(q+q^{-1})(q-q^{-1})^2}+ \frac{(q+q^{-1})\De(K^{-2})}{(q-q^{-1})^2} + [\rho]_q\frac{\De(Y_\rho)-[\rho]_q}{q+q^{-1}}  -\frac{ 2}{(q-q^{-1})^2}  ,\label{eq:DeCasimirinYrhoK-2}
\end{align}
completing step (2).\\
\ \\
Lastly, for step (3) we will combine steps (1) and (2) and do an explicit computation to obtain the explicit action from $\pitensor(\De(\Om))$ on the $\xi$ variable of the nested product of $q$-Krawtchouk polynomials $K_\mathsf{R}(\eta,\xi)$. This gives the rates of the generator of $\RASEP{q,\vec{N},\rho}$ given in Definition \ref{Def:dynamic ASEP}. 
\begin{theorem}\label{Thm:dynamicASEP}
	The action of the operator $\pitensor(\De(\Omega))$ on the $\eta$-variable of $K_{\mathsf{R}}(\eta,\xi)$ can be transferred to the $\xi$-variable,
	\begin{align*}
		\begin{split}[\pitensor(\De(\Omega))K_{\mathsf{R}}(\cdot,\xi)](\eta) =&  C^{\mathsf R,+}_k [K_{\mathsf{R}}(\eta,\xi^{k,k+1})-K_{\mathsf{R}}(\eta,\xi)] +  C^{\mathsf R,-}_{k+1}[K_{\mathsf{R}}(\eta,\xi^{k+1,k})-K_{\mathsf{R}}(\eta,\xi) ] \\
			&+ \big[\tfrac12(N_k+N_{k+1}+1)\big]_q^2 )K_{\mathsf{R}}(\eta,\xi).\end{split}
	\end{align*}
	Here, $C^{\mathsf R,+}$ and $C^{\mathsf R,-}$ are the $\xi$-dependent rates from $\RASEP{q,\vec{N},\rho}$ given in Definition \ref{Def:dynamic ASEP}.
\end{theorem}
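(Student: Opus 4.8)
The plan is to apply the operator identity \eqref{eq:DeCasimirinYrhoK-2}, with the free parameter chosen as $\rho = h^{+}_{k+2}(\xi)$, to the function $K_{\mathsf R}(\cdot,\xi)$ and to use Lemma \ref{lem:etatoxi} to rewrite every resulting $\eta$-action as an action on the $\xi$-variable. Write $A := \pitensor\big(\De(Y_{h^{+}_{k+2}(\xi)})\big) - [\rho]_q$ and $B := \pitensor\big(\De(K^{-2})\big)$. By \eqref{eq:lemetatoxi1} the operator $A$ acts on $K_{\mathsf R}(\cdot,\xi)$ as multiplication by the scalar $\nu(\xi) := -[h^{+}_k(\xi)]_q$; crucially, the same eigenrelation holds for every configuration obtained from $\xi$ by shifting the occupations at sites $k,k+1$, since such shifts leave the nesting parameter $h^{+}_{k+2}(\xi)$ untouched. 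By \eqref{eq:lemetatoxi2} the operator $B$ acts as $q^{N_k+N_{k+1}}$ times the explicit nine-term shift operator of the lemma.

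First I would dispatch the easy terms of \eqref{eq:DeCasimirinYrhoK-2}: the linear $\De(Y_\rho)$-term and the constant act diagonally, and the linear $\De(K^{-2})$-term reproduces the nine-term operator scaled by $(q+q^{-1})q^{N_k+N_{k+1}}/(q-q^{-1})^2$. The only noncommutative piece is $f(A,B) = (q^2+q^{-2})ABA - A^2B - BA^2$. Since every monomial in $f$ contains exactly one $B$, I would evaluate it by inserting $B\,K_{\mathsf R}(\cdot,\xi) = q^{N_k+N_{k+1}}\sum_i c_i\,K_{\mathsf R}(\cdot,\xi_i)$ over the nine shifted states $\xi_i$; an $A$ to the right of $B$ contributes the scalar $\nu(\xi)$ and an $A$ to the left contributes $\nu(\xi_i)$, so $ABA$, $A^2B$ and $BA^2$ produce $\nu(\xi)\nu(\xi_i)$, $\nu(\xi_i)^2$ and $\nu(\xi)^2$ respectively. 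Thus $f(A,B)$ puts the coefficient $q^{N_k+N_{k+1}}c_i\big[(q^2+q^{-2})\nu(\xi)\nu(\xi_i) - \nu(\xi_i)^2 - \nu(\xi)^2\big]$ in front of $K_{\mathsf R}(\cdot,\xi_i)$.

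Adding the two contributions, the total off-diagonal coefficient of $K_{\mathsf R}(\cdot,\xi_i)$ factors as $q^{N_k+N_{k+1}}c_i$ times a bracket that, writing $h=h^{+}_k(\xi)$ and $h'=h^{+}_k(\xi_i)$, vanishes precisely when $h'=h\pm2$. This is the structural heart of the argument, and it rests on the elementary identity
\[
(q+q^{-1})^2 + (q^2+q^{-2})[h]_q[h\pm2]_q - [h]_q^2 - [h\pm2]_q^2 = 0,
\]
which itself follows from $(q^2-q^{-2})^2 = (q-q^{-1})^2(q+q^{-1})^2$. Of the nine shifts, the six unwanted ones — the two double jumps $\pm(2\epsilon_k-\epsilon_{k+1})$ and the four shifts that add or remove a single particle in the pair $\{k,k+1\}$ — each move $h^{+}_k$ by $\pm2$ and hence drop out, which is exactly what forces $\pitensor(\De(\Om))$ to be a nearest-neighbour generator. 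The two surviving shifts $\xi^{k,k+1}$ and $\xi^{k+1,k}$ have $h'=h$, so their brackets collapse to $\tfrac{q+q^{-1}}{(q-q^{-1})^2} + \tfrac{[h^{+}_k(\xi)]_q^2}{q+q^{-1}}$.

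The hard part will be the remaining bookkeeping. I would insert the explicit coefficients $a_1(\xi_{k+1})a_{-1,2}(\xi_k)$ and $a_{-1}(\xi_{k+1})a_{1,-2}(\xi_k)$ from Appendix \ref{app:coefficients} for the two surviving shifts and check, after simplifying the $q$-shifted factorials, that the resulting coefficients equal the rates $C^{\mathsf R,+}_k$ and $C^{\mathsf R,-}_{k+1}$ of Definition \ref{Def:dynamic ASEP}; comparison with the rewritten rates \eqref{eq:rewrittenrates} is the most efficient route. Finally I would assemble the diagonal coefficient from the diagonal parts of $f(A,B)$, the two linear terms and the constant, and verify that it equals $-C^{\mathsf R,+}_k - C^{\mathsf R,-}_{k+1} + [\tfrac12(N_k+N_{k+1}+1)]_q^2$. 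This concluding step is routine but genuinely lengthy, and is where I expect most of the effort to lie, in contrast to the clean cancellation of the six unwanted terms.
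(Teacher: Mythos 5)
Your proposal is correct and follows essentially the same route as the paper: the same decomposition \eqref{eq:DeCasimirinYrhoK-2} with $\rho=h^{+}_{k+2}(\xi)$, the same use of Lemma \ref{lem:etatoxi} (including the key observation that shifts at sites $k,k+1$ leave $h^{+}_{k+2}(\xi)$ unchanged, so the eigenrelation applies to the shifted states), and the same identity $(q^2+q^{-2})[p]_q[p+2]_q-[p]_q^2-[p+2]_q^2+(q+q^{-1})^2=0$ killing the six particle-non-conserving terms. The only divergence is at the very end: where you propose a lengthy direct verification of the diagonal coefficient, the paper sidesteps it by writing the result in the form \eqref{eq:DeOmForm} with an unknown diagonal factor $\gamma_k(\xi)$ and evaluating at $\eta=0$, where $K_{\mathsf{R}}(0,\xi)=1$ and \eqref{eq:ASEPCasimir2} immediately force $\gamma_k(\xi)=\big[\tfrac12(N_k+N_{k+1}+1)\big]_q^2$ --- a trick worth adopting, since it eliminates exactly the step you flagged as the bulk of the effort.
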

\noindent Note that the factor $\big[\tfrac12(N_k+N_{k+1}+1)\big]_q^2$ is the same as the one appearing in \eqref{eq:ASEPCasimir2} for $\ASEP{q,\vec{N}}$.
\begin{proof}
	The idea is to use \eqref{eq:DeCasimirinYrhoK-2} and Lemma \ref{lem:etatoxi} to transfer the action of $\pitensor(\De(\Om))$ on $K_{\mathsf{R}}(\eta,\xi)$ from the $\eta$-variable to the $\xi$-variable. Applying $\pitensor$ to \eqref{eq:DeCasimirinYrhoK-2} we obtain for any $\rho\in\C$,
	\begin{align}
		\begin{split}[\pitensor(\Delta(\Omega))=& \frac{f\big(\pitensor(\De(Y_{\rho})-[\rho])_q,\pitensor(\De(K^{-2}))\big)+(q+q^{-1})^2\pitensor(\De(K^{-2}))}{(q+q^{-1})(q-q^{-1})^2} \\
			&+ [\rho]_q\frac{\pitensor(\De(Y_{\rho})-[\rho]_q)}{q+q^{-1}}  -\frac{ \pitensor(2)}{(q-q^{-1})^2}. \end{split}\label{eq:DeOmwrittenout}
	\end{align}
	By Lemma \ref{lem:etatoxi}, we have that $\pitensor(\De(K^{-2}))$ acts on $K_\mathsf{R}(\eta,\xi)$ as a $9$-term operator in the $\xi$-variable and
	\begin{align}
		[\pitensor(\De(Y_{h^{+}_{k+2}(\xi)})-[h^{+}_{k+2}(\xi)]_q)K_\mathsf{R}(\cdot,\xi)](\eta) = - [h^{+}_{k}(\xi)]_qK_\mathsf{R}(\eta,\xi).\label{eq:piDeYrho}
	\end{align}
	Since $\pitensor(\De(K^{-2}))$ is a $9$-term operator, we need some extra notation. Denote by $\xi_{k,k+1}(j,m)$ the state $\xi$ in which there are $j$ and $m$ particles added to site $k$ and $k+1$ respectively, i.e.
	\begin{align*}
		\xi_{k,k+1}(j,m)=(\xi_1,\ldots,\xi_{k-1},\xi_k+j,\xi_{k+1}+m,\xi_{k+2},\ldots,\xi_M).
	\end{align*}  
	Note that $\xi_{k,k+1}(-1,1)=\xi^{k,k+1}$ and $\xi_{k,k+1}(1,-1)=\xi^{k+1,k}$. Moreover, if $j+m\neq 0$, the amount of particles in the states $\xi$ and $\xi_{k,k+1}(j,m)$ is different. Let us apply $\pitensor(\De(\Om))$ to $K_\mathsf{R}(\eta,\xi)$. Then we obtain, using \eqref{eq:DeOmwrittenout} with $\rho=h^{+}_{k+2}(\xi)$, Lemma \ref{lem:etatoxi} and \eqref{eq:piDeYrho},
	\begin{align}
		\begin{split}
			\big[\pitensor(\De(\Om))K_{\mathsf{R}}(\cdot,\xi)\big](\eta) =&\ q^{N_k+N_{k+1}}a_{-1}(\xi_{k+1}) \sum_{j=0}^2 a_{j,-2}(\xi_{k})\beta_{k}(j-1) K_{\mathsf{R}}(\eta,\xi_{k,k+1}(j,-1)) \\
			&+q^{N_k+N_{k+1}}a_0(\xi_{k+1}) \sum_{j=-1}^1 a_{j}(\xi_{k})\beta_k(j)K_{\mathsf{R}}(\eta,\xi_{k,k+1}(j,0))\\
			&+q^{N_k+N_{k+1}}a_1(\xi_{k+1}) \sum_{j=-2}^0 a_{j,2}(\xi_{k})\beta_k(j+1)K_{\mathsf{R}}(\eta,\xi_{k,k+1}(j,1))\\
			&- \left(\frac{[h^{+}_{k+2}(\xi)]_q[h^{+}_{k}(\xi)]_q}{q+q^{-1}}+\frac{2}{(q-q^{-1})^2} \right)K_\mathsf{R}(\eta,\xi),
		\end{split}\label{eq:proof9term}
	\end{align}
	where
	\begin{align*}
		\begin{split}\beta_k(m)=&\frac{ (q^2+q^{-2})[h^{+}_{k}(\xi)]_q[h^{+}_{k}(\xi)+2m]_q-[h^{+}_{k}(\xi)]_q^2-[h^{+}_{k}(\xi)+2m]_q^2 +(q+q^{-1})^2}{(q-q^{-1})^2(q+q^{-1})}. \end{split}
	\end{align*}
	The expression \eqref{eq:proof9term} greatly simplifies from a $9$-term operator to a $3$-term operator, since for all $p\in\R$ we have the identity
	\begin{align}
		(q^2+q^{-2})[p]_q[p+2]_q - [p]_q^2-[p+2]_q^2 + (q+q^{-1})^2 =0,
	\end{align} 
	as readily verified by a direct computation. Therefore, $\beta_k(m)=0$ if $m= \pm 1$. Consequently, only the terms in \eqref{eq:proof9term} with $K_\mathsf{R}(\eta,\xi_{k,k+1}(j,m))$ remain where $j+m=0$, i.e. the amount of particles is preserved. Thus,
	\begin{align*}
		\begin{split}
			\big[\pitensor(\De(\Om))K_{\mathsf{R}}(\cdot,\xi)\big](\eta) =&\ q^{N_k+N_{k+1}}a_{-1}(\xi_{k+1})  a_{1,-2}(\xi_{k})\beta_{k}(0) K_{\mathsf{R}}(\eta,\xi^{k+1,k}) \\
			&+q^{N_k+N_{k+1}}a_0(\xi_{k+1}) a_{0}(\xi_{k})\beta_k(0)K_{\mathsf{R}}(\eta,\xi)\\
			&+q^{N_k+N_{k+1}}a_1(\xi_{k+1})  a_{-1,2}(\xi_{k})\beta_k(0)K_{\mathsf{R}}(\eta,\xi^{k,k+1})\\
			&- \left(\frac{[h^{+}_{k+2}(\xi)]_q[h^{+}_{k}(\xi)]_q}{q+q^{-1}}+\frac{2}{(q-q^{-1})^2} \right)K_\mathsf{R}(\eta,\xi).
		\end{split}
	\end{align*}
	Using the explicit expressions for $a_j(\xi_{k})$ and $a_{j,-2j}(\xi_{k+1})$ found in Appendix \ref{app:coefficients}, we obtain the factors $C^{\mathsf R,+}_k$ and $C^{\mathsf R,-}_k$ for $K_\mathsf{R}(\eta,\xi^{k,k+1})$ and $K_\mathsf{R}(\eta,\xi^{k+1,k})$ respectively. Therefore,
	\begin{align}
		\begin{split}[\pitensor(\De(\Omega))K_{\mathsf{R}}(\cdot,\xi)](\eta) =&\, C^{\mathsf R,+}_k [K_{\mathsf{R}}(\eta,\xi^{k,k+1})-K_{\mathsf{R}}(\eta,\xi)]\\
			&+  C^{\mathsf R,-}_k[K_{\mathsf{R}}(\eta,\xi^{k+1,k})-K_{\mathsf{R}}(\eta,\xi) ] \\
			&+ \gamma_k(\xi) K_{\mathsf{R}}(\eta,\xi)\end{split}\label{eq:DeOmForm}
	\end{align} 
	for some factor $\gamma_k(\xi)$. To find this factor, observe that $K_\mathsf{R}(0,\xi)=1$ for any $\xi$ by \eqref{eq:duality function K} and the fact that $k(0,x;q,N,\rho)=1$. If we now take $\eta=0$ in \eqref{eq:DeOmForm}, we obtain
	\begin{align*}
		[\pitensor(\De(\Omega))K_{\mathsf{R}}(\cdot,\xi)](0) = \gamma_k(\xi) K_{\mathsf{R}}(0,\xi).
	\end{align*}
	Since $\pitensor(\De(\Om))$ is related to the generator of the $\ASEP{q,\vec{N}}$ process via \eqref{eq:ASEPCasimir2}, we also have that
	\begin{align*}
		[\pitensor(\De(\Omega))K_{\mathsf{R}}(\cdot,\xi)](0) = \big[\tfrac12(N_k+N_{k+1}+1)\big]_q^2.
	\end{align*}
	Hence $\gamma_k(\xi)=\big[\tfrac12(N_k+N_{k+1}+1)\big]_q^2$. 
\end{proof}
\subsection{Duality between ASEP$_\mathsf{R}$ and ASEP} \label{subsec:dualityDynASEPandASEP}
Because of the way we constructed the generator of $\RASEP{q,\vec{N},\rho}$, we automatically get a Markov duality with the standard $\ASEP{q,\vec{N}}$ and the  $q$-Krawtchouk polynomials $K_{\mathsf{R}}(\eta,\xi)$ as duality functions, which is the content of Theorem \ref{Thm:dualityASEPRASEP}.
\begin{proof}[Proof of Theorem \ref{Thm:dualityASEPRASEP}]
	Combining \eqref{eq:ASEPCasimir2} with Theorem \ref{Thm:dynamicASEP}, we have
	\begin{align*}
		[L_{q,\vec{N}}K_{\mathsf{R}}(\cdot,\xi)](\eta) &=\sum_{k=1}^{M-1} c^+_k [K_{\mathsf{R}}(\eta^{k,k+1},\xi)-K_{\mathsf{R}}(\eta,\xi)] + c^-_k [K_{\mathsf{R}}(\eta^{k+1,k},\xi)-K_{\mathsf{R}}(\eta,\xi)].\\
		&= \sum_{k=1}^{M-1}[ \pitensor\big(\De(\Om)-[\tfrac12(N_k+N_{k+1}+1)]_q^2\big)K_{\mathsf{R}}(\cdot,\xi)](\eta) \\
		&= \sum_{k=1}^{M-1} C^{\mathsf R,+}_k [K_{\mathsf{R}}(\eta,\xi^{k,k+1})-K_{\mathsf{R}}(\eta,\xi)] +  C^{\mathsf R,-}_k[K_{\mathsf{R}}(\eta,\xi^{k+1,k})-K_{\mathsf{R}}(\eta,\xi) ]\\
		&= [L^{\mathsf{R}}_{q,\vec{N},\rho}K_{\mathsf{R}}(\eta,\cdot)](\xi). \qedhere
	\end{align*}
\end{proof}
\subsection{Reversibility of ASEP$_\mathsf{R}$}\label{subsec:RevDynASEP}
We end this section by giving an alternative proof of the reversibility of $\RASEP{q,\vec{N},\rho}$. Similarly as for $\ASEP{q,\vec{N}}$, we do this by proving that its generator is self-adjoint with respect to the reversible measure $W_\mathsf{R}$ defined in \eqref{eq:Weightfunction WR}.
\begin{proof}[Alternative proof of Theorem \ref{Thm:reversiblemeasures}(ii)]
	We will use the orthogonality of the duality function $K_\mathsf{R}(\eta,\xi)$ given in Theorem \ref{Thm:reversiblemeasures}(iii). First, define the Hilbert space $H^\mathsf{R}$ as functions acting on states $\xi$ in our state space $X=\{0,\ldots,N_1\}\times \cdots \times \{0,\ldots,N_M\}$ with inner product induced by the measure $W_\mathsf{R}$,
	\begin{align*}
		\langle f,g\rangle_{H^\mathsf{R}} = \sum_\xi f(\xi)g(\xi) W_\mathsf{R}(\xi;\vec{N},\rho;q).
	\end{align*} 
	Next, define the linear operator $\Lambda_\mathsf{R}:H\to H^\mathsf{R}$ by
	\begin{align*}
		(\Lambda_\mathsf{R} f)(\xi)=\left\langle f, K_\mathsf{R}(\cdot,\xi) \right\rangle_{H},
	\end{align*} 
	where $H$ was the Hilbert space with inner product given by $w(\eta;\vec{N};q)$, see \eqref{eq:HilbertH}.
	Then $\Lambda$ is a unitary operator since it maps the orthogonal basis $\delta_\eta$ of $H$ to an orthogonal basis of $H^\mathsf{R}$ while preserving its norm. Indeed, trivially we have
	\begin{align*}
		||\delta_\eta||_{H}^2 = w(\eta;\vec{N};q). 
	\end{align*}
	On the other hand, since
	\begin{align*}
		\Lambda_\mathsf{R}(\delta_\eta)(\xi) = K_\mathsf{R}(\eta,\xi)w(\eta;\vec{N};q),
	\end{align*}
	we can use Theorem \ref{Thm:reversiblemeasures}(iii) to obtain
	\begin{align*}
		||\Lambda_\mathsf{R}(\delta_\eta)||_{H^\mathsf{R}}^2 =w(\eta;\vec{N};q).
	\end{align*}
	Now let us define a $*$-representation of $\U_q^{\otimes M}$ equivalent to
	\begin{align*}
		\pi=\pi_1 \otimes \pi_2 \otimes \cdots \otimes \pi_M
	\end{align*}
	by intertwining with $\Lambda_\mathsf{R}$. That is, let $\sigma$ be the representation on $H^\mathsf{R}$ defined by
	\begin{align}
		\sigma(X)=\Lambda_\mathsf{R} \circ\pi(X)\circ \Lambda^{-1}_\mathsf{R}, \qquad X \in \U_q^{\otimes M}.
	\end{align}
	For convenience, write for $X\in \U_q\otimes \U_q$,
	\begin{align*}
		\sigma_{k,k+1}(X)=\sigma(\underbrace{1\otimes \cdots \otimes 1}_{k-1} \otimes X \otimes \underbrace{1 \cdots \otimes 1}_{M-(k+1)}).
	\end{align*}
	Note that $\sigma_{k,k+1}(\Delta(Y_{h^{+}_{k+2}(\xi)}))$ and $\sigma_{k,k+1}(\Delta(K^{-2}))$ are exactly the actions of $\pitensor(\De(Y_{h^{+}_{k+2}(\xi)}))$ and $\pitensor(\Delta(K^{-2}))$ on the $\xi$ variable of $K_\mathsf{R}(\eta,\xi)$ given in Lemma \ref{lem:etatoxi}. Indeed, using that $\pitensor(\De(Y_\rho))$ is self-adjoint for all $\rho\in\R$, we have
	\begin{align*}
		(\Lambda_\mathsf{R} (\pitensor(\De(Y_{h^{+}_{k+2}(\xi)}))f))(\xi) &= \left\langle \pitensor(\De(Y_{h^{+}_{k+2}(\xi)}))f, K_\mathsf{R}(\,\cdot\,,\xi) \right\rangle_{H}\\
		&= \left\langle f, [\pitensor(\De(Y_{h^{+}_{k+2}(\xi)}))K_\mathsf{R}](\,\cdot\,,\xi) \right\rangle_{H}\\
		&= \Big([h^{+}_{k+2}(\xi)]_q-[h^{+}_{k}(\xi)]_q\Big) (\Lambda_\mathsf{R} f)(\xi),
	\end{align*}
	hence
	\begin{align*}
		\Lambda_\mathsf{R} \circ\pitensor(\De(Y_{h^{+}_{k+2}(\xi)})) = \Big([h^{+}_{k+2}(\xi)]_q-[h^{+}_{k}(\xi)]_q\Big) \Lambda_\mathsf{R}.
	\end{align*}
	Therefore,
	\begin{align*}
		\Big[\sigma_{k,k+1}(\De(Y_{h^{+}_{k+2}(\xi)}))f\Big](\xi)= \Big([h^{+}_{k+2}(\xi)]_q-[h^{+}_{k}(\xi)]_q\Big)f(\xi).
	\end{align*}
	The case $\sigma_{k,k+1}(\De(K^{-2}))$ is similar. Since the right-hand side of \eqref{eq:CasimirinYrhoK-2} is invariant under taking the $*$-operation, we can repeat the proof of Theorem \ref{Thm:dynamicASEP} to obtain that
	\begin{align*}
		[\sigma_{k,k+1}(\De(\Omega))f](\xi) = &\, C^{\mathsf R,+}_k [f(\xi^{k,k+1})-f(\xi)] + C^{\mathsf R,-}_k[f(\xi^{k+1,k})-f(\xi) ] \\
		&+ \big[\tfrac12(N_k+N_{k+1}+1)\big]_q^2 )f(\xi).
	\end{align*}
	Therefore, we can construct the generator of $\RASEP{q,\vec{N},\rho}$ with $\sigma$,
	\begin{align*}
		\genRASEP = \sum_{k+1}^{M-1}\sigma_{k,k+1}\left( \De(\Omega) -\big[\tfrac12(N_k+N_{k+1}+1)\big]_q^2 \right).
	\end{align*}
	Since $\sigma$ is a $*$-representation and $\De(\Omega^*)=\De(\Omega)$, we have that $\genRASEP$ is self-adjoint with respect to the reversible measure $W_\mathsf{R}$.
\end{proof}
\section{Generalized dynamic ASEP on the reversed lattice}\label{sec:LeftDynASEP}
As explained in Subsection \ref{subsec:LASEP}, we can define dynamic ASEP on the reversed lattice, referred to as $\LASEP{q,\vec{N},\lambda}$. In this section, we prove duality of this process with $\RASEP{q,\vec{N},\rho}$ with a doubly nested product of $q$-Racah polynomials as duality functions. 
Algebraically, one can describe this process in a similar fashion as before by replacing $K$ by $K^{-1}$ and $q$ by $q^{-1}$. Instead of the twisted primitive element 
\[
Y_\rho =  q^{\frac12} EK + q^{-\frac12}FK - [\rho]_q(K^2-1),
\] 
one then gets the twisted primitive element
\[
\tilde{Y}_\lambda =  q^{-\frac12} EK^{-1} + q^{\frac12}FK^{-1} - [\lambda]_q(K^{-2}-1).
\]
Eigenfunctions of $\pi_k(\tilde{Y}_\lambda)$ are given by $q^{-1}$-Krawtchouk polynomials. Moreover, the inner product of these functions with the eigenfunctions of $\pi_k(Y_\rho)$ are $q$-Racah (or Askey-Wilson) polynomials \cite{Groeneveltquantumaskey,Ros}. Furthermore, this pair of twisted primitive elements satisfies the Askey-Wilson algebra relations \cite{GranZhed}, an algebra that encodes many properties of the Askey-Wilson polynomials. Therefore, it comes as no surprise that duality functions between $\LASEP{q,\vec{N},\lambda}$ and $\RASEP{q,\vec{N},\rho}$ are $q$-Racah polynomials.

\subsection{Duality between ASEP$_\mathsf{L}$ and ASEP$_\mathsf{R}$}\label{subsec:dualityLASEPRASEP}
Since both $\RASEP{q,\vec{N},\rho}$ and $\LASEP{q,\vec{N},\lambda}$  are dual to $\ASEP{q,\vec{N}}$, they are also dual to each other. A duality function is given by the inner product in the Hilbert space $H$ corresponding to the reversible measure of $\ASEP{q,\vec{N}}$ of the duality function $K_\mathsf{L}(\eta,\zeta)$ between $\LASEP{q,\vec{N},\lambda}$ and $\ASEP{q,\vec{N}}$ and the duality function $K_\mathsf{R}(\eta,\xi)$ between $\RASEP{q,\vec{N},\lambda}$ and $\ASEP{q,\vec{N}}$. This scalar-product method is used before in e.g. \cite{CarFraGiaGroRed}. We then get 
\begin{align*}
	R(\zeta,\xi)=\langle K_\mathsf{L}(\cdot,\zeta),K_\mathsf{R}(\cdot,\xi)\rangle_{H}=\sum_{\eta}  K_\mathsf{L}(\eta,\zeta)K_\mathsf{R}(\eta,\xi)w(\eta;\vec{N};q),
\end{align*}
where $w$ is the reversible measure of $\ASEP{q,\vec{N}}$ given in \eqref{eq:Weightfunction w}. We can fit in an extra parameter $v\in\R^\times$ by slightly adjusting one of the original duality functions. Define
\begin{align*}
	K_\mathsf{L}^v(\eta,\zeta) = (-v)^{|\eta|} K_\mathsf{L}(\eta,\zeta) = q^{-\frac12 u(\eta;\vec{N})}\prod_{k=1}^M  (-v)^{\eta_k} k(\eta_k, \zeta_k;h^{-}_{k-1}(\zeta),N_k;q^{-1}),
\end{align*}
where the minus sign is present because of the formula we will encounter in Lemma \ref{lem:krawthouckracah}.
Since this extra factor $(-v)^{|\eta|}$ is a function depending on the total number of particles in the state $\eta$, $K_\mathsf{L}^v(\eta,\zeta)$ is also a duality function between $\LASEP{q,\vec{N},\lambda}$ and $\ASEP{q,\vec{N}}$ (see Remark \ref{rem:InvTotPart}). Therefore, define precisely as in \eqref{eq:sumKrawtchouk})
\begin{align*}
	R^v(\zeta,\xi)=\langle K_\mathsf{L}^v(\cdot,\zeta),K_\mathsf{R}(\cdot,\xi)\rangle_{H}.
\end{align*}
We will show that $R^v$ is indeed a duality function and equal to a product of $q$-Racah polynomials as stated in Theorem \ref{Thm:KrawtchoukRacah}.\\

First, let us show that $R^v(\zeta,\xi)$ is indeed a duality function between $\LASEP{q,\vec{N},\lambda}$ and $\RASEP{q,\vec{N},\rho}$. Using the duality between $\RASEP{q,\vec{N},\rho}$ and $\ASEP{q,\vec{N}}$, we get
\begin{align*}
	\big[\genRASEP R^v(\zeta,\cdot)\big](\xi) =& \sum_\eta K_\mathsf{L}^v(\eta,\zeta)\big[ \genRASEP K_\mathsf{R}(\eta,\cdot)\big](\xi) w(\eta;\vec{N};q) \\
	=& \sum_\eta K_\mathsf{L}^v(\eta,\zeta)\big[ \genASEP  K_\mathsf{R}(\cdot,\xi)\big](\eta) w(\eta;\vec{N};q). 
\end{align*}
Since, $\genASEP$ is self-adjoint with respect to the reversible measure $w$, the expression above equals
\begin{align*}
	\sum_\eta \big[ \genASEP K_\mathsf{L}^v(\cdot,\zeta)\big](\eta)  K_\mathsf{R}(\eta,\xi) w(\eta;\vec{N};q)=& \sum_\eta \big[ \genASEP K_\mathsf{L}^v(\eta,\cdot)\big](\zeta)  K_\mathsf{R}(\eta,\xi) w(\eta;\vec{N};q) \\
	=& \big[\genLASEP R^v(\cdot,\xi)\big](\zeta),
\end{align*}
where we used the duality between $\LASEP{q,\vec{N},\lambda}$ and $\ASEP{q,\vec{N}}$ in the last step.\\

Summations over a product of $q$-hypergeometric series, like $R^v(\zeta,\xi)$, cannot always be expressed more explicitly. However, in this particular case the summation over two $\rphisempty{3}{2}$'s becomes a $\rphisempty{4}{3}$ known as $q$-Racah polynomials. The required formula is given in the following Lemma.
\begin{lemma}\label{lem:krawthouckracah}
	Let $r(y,x;\lambda,\rho,v,N)$ be the 1-site duality function as in \eqref{eq:1-site duality r},
	then we have the following summation formula between $q$-Krawtchouk polynomials and $q$-Racah polynomials,
	\begin{align}
		r(y,x;\lambda,\rho,v,N;q)=\sum_{n=0}^{N}(-v)^n k(n,y;\lambda,N;q^{-1})k(n,x;\rho,N;q)w(n;N;q).
		\label{eq:univarqracah}
	\end{align}
\end{lemma}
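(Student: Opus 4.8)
The plan is to prove the identity by a direct $q$-series computation: expand both $q$-Krawtchouk factors, carry out the sum over the degree variable $n$ by a classical $q$-summation theorem, and recognize the outcome as a terminating $\rphisempty{4}{3}$, matching it to the $q$-Racah polynomial in \eqref{eq:racah4phi3}. First I would expand each factor in its ``$q$-Krawtchouk'' guise, i.e.~as a polynomial in $q^{-n}$: write $k(n,x;\rho;N;q)=c_{\mathrm k}(n,\rho,N)\,\rphisempty{3}{2}(\ldots)$ with an inner summation index $l$ running from $0$ to $x$, and likewise $k(n,y;\lambda,N;q^{-1})$ with index $j$ running from $0$ to $y$. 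Since the second factor carries base $q^{-1}$, I would convert it to base $q$ via the reversal identity $(a;q^{-1})_m=(a^{-1};q)_m(-a)^m q^{-\binom{m}{2}}$, together with the inversion of the $q$-binomial coefficient in $w(n;N;q)=q^{n(n-N)}\qbinom{q^2}{N}{n}$. This turns the summand into a product of $q^2$-shifted factorials times explicit powers of $q$ and of $v$.

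Next I would interchange the order of summation so that the sum over $n$ is innermost with $j,l$ fixed. The $n$-dependent factors are $(q^{-2n};q^2)_l$, the reversed $(q^{2n};q^{-2})_j$, and $(q^{-2N};q^2)_n/(q^2;q^2)_n$ coming from $\qbinom{q^2}{N}{n}$, all multiplied by $(-v)^n$ and a Gaussian factor $q^{n^2+\cdots}$. Using $(q^{-2n};q^2)_l=\frac{(q^2;q^2)_n}{(q^2;q^2)_{n-l}}(-1)^l q^{l(l-1)-2nl}$ and the analogous rewriting of the reversed factor, I would move the remaining $n$-dependence into the running index, so that the inner sum becomes a single terminating basic hypergeometric series in $n$. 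The crucial point — and the main obstacle — is that the Gaussian factors $q^{n^2}$ contributed by $w$, by the two normalizations $c_{\mathrm k}$, and by the base reversal must cancel exactly, leaving a balanced (Saalsch\"utzian) series; only then can the inner sum be summed in closed form by the $q$-Chu--Vandermonde (or $q$-Saalsch\"utz) summation. Verifying this balancing condition for the precise parameters, and tracking all prefactors through it, is the delicate part of the calculation, and it is here that the free parameter $v$ and the specific $q$-vs-$q^{-1}$ scalings play their role.

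Finally, after the $n$-sum, the surviving double sum over $j$ and $l$ reorganizes — after a shift of one index — into a single terminating $\rphisempty{4}{3}$. I would match its numerator and denominator parameters against \eqref{eq:racah4phi3}, which pins down $(\alpha,\beta,\gamma,\delta)=(-v^{-1}q^{\rho+\lambda-N-1},\,vq^{\rho-\lambda-N-1},\,q^{-2N-2},\,-q^{2\lambda})$ exactly as in \eqref{eq:1-site duality r}, and collect all residual powers of $q$ and $v$ into $c_{\mathrm r}$. As sanity checks one can evaluate the identity at $y=0$, where the left side is $c_{\mathrm r}(0,x;\ldots)$ and the right side collapses to a single sum of a $q$-Krawtchouk polynomial against the weight, and verify the symmetry under $(q,v,\rho,\lambda)\mapsto(q^{-1},v^{-1},\lambda,\rho)$ already recorded in Remark \ref{rem:duality racah}. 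I would also note the conceptual reason behind the formula: the left-hand side is precisely the overlap in $H_k$ between an eigenfunction of $\pi_k(\tilde Y_\lambda)$ and one of $\pi_k(Y_\rho)$, which by \cite{Groeneveltquantumaskey,Ros} is a $q$-Racah polynomial, so a $\rphisempty{4}{3}$ is forced to appear; the computation above simply makes the prefactor and parameters explicit.
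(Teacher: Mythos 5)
Your overall strategy (expand both factors, swap summation, sum over $n$ in closed form, recognize a terminating $\rphisempty{4}{3}$) founders exactly at the step you yourself flag as crucial. First, the bookkeeping of Gaussian factors is off: neither the normalizations $c_{\mathrm k}$ nor the base reversal contributes $q^{n^2}$; the only quadratic exponent comes from $w(n;N;q)=q^{n(n-N)}\qbinom{q^2}{N}{n}$, and it is \emph{absorbed}, not cancelled, via $q^{n(n-N)}\qbinom{q^2}{N}{n}=(-1)^n q^{n(N+1)}(q^{-2N};q^2)_n/(q^2;q^2)_n$. More seriously, after this absorption the inner sum over $n$ at fixed inner indices $(l,j)$ becomes, up to elementary prefactors and the shift $n=m+j$ (take $j\ge l$; the other case is symmetric),
\begin{align*}
\rphis{2}{1}{q^{2j-2N},\,q^{2j+2}}{q^{2(j-l+1)}}{q^2,\;v\,q^{\lambda-\rho+N+1-2l}},
\end{align*}
a terminating $\rphisempty{2}{1}$ whose argument is proportional to the free parameter $v$. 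The $q$-Chu--Vandermonde summations apply only at the special arguments $z=q^2$ or $z=cq^{2(N-j)}/b$, and no balanced ($q$-Saalsch\"utz) structure is available either; since $v$ ranges over all of $\R^\times$, the inner sum has no closed product form for generic $v$. So the triple sum cannot be collapsed one index at a time as you propose: the identity holds only because the full bilinear sum reorganizes globally, which is precisely why a genuinely bilinear summation theorem is needed rather than a classical single-sum evaluation.

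That stronger input is what the paper actually uses: it quotes the summation formula \eqref{eq:sumchiharaAW} from \cite[Lemma 4.6]{Groeneveltquantumaskey} --- a known identity expressing a weighted sum of two $\rphisempty{3}{2}$'s in bases $q^2$ and $q^{-2}$ as a single terminating $\rphisempty{4}{3}$ --- so that the whole proof of Lemma \ref{lem:krawthouckracah} reduces to a parameter specialization ($k=-N$, $\sigma=iq^{\rho}$, $\tau=-iq^{-\lambda}$, etc.) and a verification that the prefactors $c_1,c_2$ simplify to $c_{\mathrm r}$ and to $(-v)^n q^{n(n+\lambda-\rho-N)}\qbinom{q^2}{N}{n}$. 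Your closing ``conceptual reason'' --- that the left-hand side is the overlap of eigenfunctions of $\pi_k(\tilde Y_\lambda)$ and $\pi_k(Y_\rho)$, hence forced to be a $q$-Racah polynomial by \cite{Groeneveltquantumaskey,Ros} --- is in substance the paper's actual route, but in your write-up it is only a heuristic: to make it the backbone you would either have to cite that bilinear summation as the paper does, or replace the failed $n$-summation by another argument (for instance, showing both sides satisfy the same three-term recurrence in $x$ coming from \eqref{eq:qdifkrawtchouk} and matching at $y=0$, where --- as your sanity check correctly suggests --- the $n$-sum does close, though via the terminating $q$-binomial theorem rather than $q$-Chu--Vandermonde). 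As written, the proposal has a genuine gap at its central step.
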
 
\begin{proof}
	We start on the right-hand side of \eqref{eq:univarqracah}. After expressing the Krawtchouk polynomials $k$ as $_3\varphi_2$-functions, the right-hand side is written as
	\begin{align*}
		\sum_{n=0}^{N}\qbinom{q^2}{N}{n}q^{n(n+\lambda-\rho-N)} (-v)^n &\rphis{3}{2}{q^{-2n}, q^{-2x}, -q^{2x+2\rho-2N}}{q^{-2N},0}{q^2,q^2}\\
		&\hspace{1cm} \times \rphis{3}{2}{q^{2n}, q^{2y}, -q^{-2y-2\lambda+2N}}{q^{2N},0}{q^{-2},q^{-2}}.
	\end{align*}
	Showing that this sum is equal to a $q$-Racah polynomial basically comes down to \cite[Lemma 4.6 ]{Groeneveltquantumaskey} and a change of parameters. Indeed, using definition (4.1) in \cite{Groeneveltquantumaskey} we see that the following summation formula between $\rphisempty{3}{2}$ and $\rphisempty{4}{3}$ functions holds,
	\begin{align}
		\begin{split}c_1&\rphis{4}{3}{q^{-2m},abcdq^{2(m-1)},ax,ax^{-1}}{ab,ac,ad}{q^2,q^2}\\
			&=\sum_{n\in \N} c_2  \rphis{3}{2}{q^{-2n}, \sigma q^{k}\hat{x}, \sigma q^{k}\hat{x}^{-1}}{q^{2k},0}{q^2,q^2} 
			\rphis{3}{2}{q^{2n}, \tau q^{-k}\hat{y}, \tau q^{-k}\hat{y}^{-1}}{q^{-2k},0}{q^{-2},q^{-2}},\end{split}\label{eq:sumchiharaAW}
	\end{align}
	where
	\begin{align*}
		c_1&=q^{-m(m-1)} \frac{(acq^{2m},bcq^{2m};q^2)_\infty}{(ab;q^2)_m(c\hat{x},c\hat{x}^{-1};q^2)_\infty} \frac{(ab,ac,ad;q^2)_m}{(-ad)^m},\\
		c_2&=\frac{v^n q^{n(k-1)}}{(\sigma\tau)^{n}} 
		\frac{(q^{-2k};q^{-2})_n}{(q^{-2};q^{-2})_n},
	\end{align*}
	and $(a,b,c,d,\hat{y})=(q^k\sigma,q^k\sigma^{-1},qv\tau^{-1},qv^{-1}\tau^{-1},\tau q^{-k-2m})$. Taking $k=-N$, $\hat{y}=-iq^{-2y+N-\lambda}$, $\hat{x}=iq^{2x+\rho-N}$, $\sigma=iq^{\rho}$ and $\tau=-iq^{-\lambda}$, where again $i=\sqrt{-1}$, we obtain $m=y$ and on the right-hand side we get a finite sum because $(q^{2N};q^{-2})_{n}=0$ if $n > N$,
	\begin{align}
		\begin{split} c_1 &\rphis{4}{3}{q^{-2y},-q^{2y+2\lambda-2N},-q^{2x+2\rho-2N},q^{-2x}}{q^{-2N},-vq^{\rho+\lambda-N+1},-v^{-1}q^{\rho+\lambda-N+1}}{q^2,q^2}=  \\
			&\sum_{n=0}^{N} c_2 \rphis{3}{2}{q^{-2n}, -q^{2x+2\rho-2N}, q^{-2x}}{q^{-2N},0}{q^2,q^2} 
			\rphis{3}{2}{q^{2n}, -q^{-2y-2\lambda+2N}, q^{2y}}{q^{2N},0}{q^{-2},q^{-2}},\end{split}
			\label{eq:sumchiharaAW2}
	\end{align}
	where
	\begin{align*}
		c_1 =& (-1)^{y}d^{-{y}}q^{-y(y-1)} \frac{(  acq^{2y},bcq^{2y};q^2)_\infty}{(ab;q^2)_{y}(c\hat{x},c\hat{x}^{-1};q^2)_\infty}\frac{(ab,ac,ad;q^2)_{y}}{a^{y}}\\
		=& v^{y}q^{-y(y+\rho+\lambda-N)} \frac{(-vq^{\rho+\lambda-N+1}, vq^{2y-\rho+\lambda-N+1};q^2)_\infty (-v^{-1}q^{\rho+\lambda-N+1})_{y}}{(-vq^{2x+\rho+\lambda-N+1},vq^{-2x-\rho+\lambda+N+1};q^2)_{\infty}}\\
		=& v^{y}\frac{(-vq^{\rho+\lambda-N+1};q^2)_{x} (-v^{-1}q^{\rho+\lambda-N+1})_{y}( vq^{2y-\rho+\lambda-N + 1};q^2)_{N} }{q^{y(y+\rho+\lambda-N)}(vq^{-2x-\rho+\lambda+N+1};q^2)_{x+y}},		
	\end{align*}
	and
	\[
	c_2 =\frac{v^n q^{n(-N-1)}}{q^{n(\rho-\lambda)}} 
	\frac{(q^{2N};q^{-2})_{n}}{(q^{-2};q^{-2})_{n}} = \qbinom{q^2}{N}{n}q^{n(n+\lambda-\rho-N)} (-v)^n.
	\]
	We see that the right-hand side of \eqref{eq:sumchiharaAW2} equals the right-hand side of \eqref{eq:univarqracah}. It remains to show that the left-hand side of \eqref{eq:sumchiharaAW2} is the required $q$-Racah polynomial. The coefficient $c_1$ is exactly the factor $c_{\mathbf r}$ as given in Appendix \ref{app:overview}. The $\rphisempty{4}{3}$ on the left-hand side of \eqref{eq:sumchiharaAW2} is a $q$-Racah polynomial $R_y(x;\al,\be,\ga,\de;q^2)$ with parameters
	\begin{align*}
		\alpha = -v^{-1}q^{\rho+\lambda-(N+1)},\ \ \ \beta=vq^{\rho-\lambda-(N+1)},\ \ \ \gamma=q^{-2(N+1)},\ \ \ \delta=-q^{2\lambda},
	\end{align*}
	which shows that the left-hand side of \eqref{eq:sumchiharaAW2} is exactly $r(y,x;\lambda,\rho,v,N)$. 
\end{proof}
Applying this formula $M$-times, we get that the duality function $R^v$ is equal to a (doubly) nested product of $q$-Racah polynomials.
\begin{proof}[Proofs of Theorem \ref{Thm:KrawtchoukRacah}]
	Writing out the definition of $R^v(\zeta,\xi)$, we obtain
	\begin{align*}
		\widehat{R}^v(\zeta,\xi) =& \sum_{\eta_1}\sum_{\eta_2}\cdots\sum_{\eta_M} K_\mathsf{L}^v(\eta,\zeta)K_\mathsf{R}(\eta,\xi)w(\eta;\vec{N};q)\\
		=& \sum_{\eta_1}\sum_{\eta_2}\cdots\sum_{\eta_M}   \prod_{k=1}^M (-v)^{\eta_k}k(\eta_k, \zeta_k;h^{-}_{k-1}(\zeta),N_k;q^{-1}) k(\eta_k, \xi_k;h^{+}_{k+1}(\xi),N_k;q)w(\eta_k;N_k;q).
	\end{align*}
	Note that the terms with $u(\eta;\vec{N})$ exactly canceled each other. Since $\eta_k$ only appears in the $k$-th term of the product, we can interchange the order of summation and product in the previous expression. Thus we obtain
	\begin{align*}
		R^v(\zeta,\xi) =& \prod_{k=1}^M\left(\sum_{\eta_k}(-v)^{\eta_k}k(\eta_k, \zeta_k; h^{-}_{k-1}(\zeta),N_k;q) k(\eta_k, \xi_k;h^{+}_{k+1}(\xi),N_k;q) w(\eta_k;N_k;q)\right)\\
		=& \prod_{k=1}^M r(\zeta_k,\xi_k;h^{-}_{k-1}(\zeta),h^{+}_{k+1}(\xi),v,N_k;q),
	\end{align*}
	where we used Lemma \ref{lem:krawthouckracah} in the last step.
	\end{proof}
	The multivariate $q$-Racah polynomials $R^v$ are orthogonal with respect to the reversible measures $W_\mathsf{L}$ and $W_\mathsf{R}$ of ASEP$_\mathsf{L}$ and ASEP$_\mathsf{R}$.
	\begin{proof}[Proof of Theorem \ref{Thm:orthogonaldualityLASEPRASEP}]
	For (ii), recall from the alternative proof of Theorem \ref{Thm:reversiblemeasures}(ii) in Section \ref{subsec:RevDynASEP} that the unitary operator $\Lambda_\mathsf{R}:H\to H^\mathsf{R}$ was defined by
	\[
		(\Lambda_\mathsf{R}f)(\xi)=\langle f, K_\mathsf{R}(\cdot,\xi) \rangle_{H}.
	\]
	Note that since
	\[
		[\Lambda_\mathsf{R} K_\mathsf{L}^v(\cdot,\zeta)](\xi)=R^v(\zeta,\xi),
	\]
	the second equation from Theorem \ref{Thm:orthogonaldualityLASEPRASEP} can be written as
	\[
		\langle \Lambda_\mathsf{R} K_\mathsf{L}^v(\cdot,\zeta),\Lambda_\mathsf{R} K_\mathsf{L}^{v^{-1}}(\cdot,\zeta')\rangle_{H^\mathsf{R}}.
	\]
	Since $\Lambda_\mathsf{R}$ is unitary, we obtain that the above expression is equal to
	\[
	\langle  K_\mathsf{L}^v(\cdot,\zeta), K_\mathsf{L}^{v^{-1}}(\cdot,\zeta')\rangle_{H}=\frac{\delta_{\zeta,\zeta'}}{W_\mathsf{L}(\zeta;\vec{N},\lambda;q)},
	\]
	where we used Corollary \ref{Cor:reversiblemeasures}(iii) in the last step.
	One can use a similar approach to prove the first equality from \ref{Thm:orthogonaldualityLASEPRASEP}. Alternatively, it can also be obtained from the second equality we just proved by exploiting that ASEP$_\mathsf{L}$ is just ASEP$_\mathsf{R}$ on the reversed lattice. That is, by sending $q\to q^{-1}$, interchanging $\rho\leftrightarrow\lambda$ and reversing the order of sites and using the first point of Remark \ref{rem:duality racah}. 
\end{proof}

\section{Limit calculations} \label{sec:calculations of limits}
In this section, we carry out the explicit calculations to derive the duality results stated in Section \ref{sec:Degenerations} from the (almost) self-duality of dynamic ASEP stated in \eqref{eq:dualityLASEPRASEP} with respect to the $q$-Racah duality functions $R^v$, see Theorem \ref{Thm:KrawtchoukRacah}. The corresponding 1-site duality functions $r$ defined in \eqref{eq:1-site duality r} are given explicitly in terms of $q$-hypergeometric functions by
\begin{equation} \label{eq:r=4phi3}
 \begin{split}r(\zeta_k,&\xi_k;h_{k-1}^-(\zeta),h_{k+1}^+(\xi),v,N_k;q)=c_r(\zeta_k,\xi_k;h_{k-1}^-(\zeta),h_{k+1}^+(\xi),v,N_k;q) \\
 &\times \rphis{4}{3}{q^{-2\xi_k}, -q^{2h^{+}_{k+1}(\xi)-2N_k+2\xi_k}, q^{-2\zeta_k}, -q^{2h^{-}_{k-1}(\zeta)-2N_k+2\zeta_k} }{-v^{-1}  q^{h^{+}_{k+1}(\xi)+h^{-}_{k-1}(\zeta)-N_k+1}, -v q^{h^{+}_{k+1}(\xi)+h^{-}_{k-1}(\zeta)-N_k+1},q^{-2N_k}}{q^2,q^2},\end{split}
\end{equation}
where 
\[
c_r(y,x;\la,\rho,v,N;q)=v^{y}\frac{(-vq^{\rho+\lambda-N+1};q^2)_{x} (-v^{-1}q^{\rho+\lambda-N+1};q^2)_{y} (vq^{2y-\rho+\lambda-N+1};q^2)_{N}}{q^{y(y+\rho+\lambda-N)}(vq^{-2x-\rho+\lambda+N+1};q^2)_{x+y}}.
\]
With this explicit form, it is quite straightforward to compute the appropriate limits.

\subsection{Limits of reversible measures} \label{ssec:limits reversible measure}
Before we consider limits of duality functions we first consider limits of reversible measures $W_{\mathsf R}$ and $W_{\mathsf L}$, defined by \eqref{eq:Weightfunction WR} and \eqref{eq: WL=WR-rev}, respectively. We show that these reversible measures tend to the reversible measure $w$ given by \eqref{eq:Weightfunction w} when the dynamic parameters $\rho$ and $\la$ tend to $\pm \infty$.

Let $0<q<1$, then the 1-site weight functions $w$ and $W$, \eqref{eq:orthokrawtchoukASEP} and \eqref{eq:orthokrawtchoukRASEP}, satisfy
\[
\begin{split}
	\lim_{\rho \to \infty} q^{2\rho(\xi_k-N_k)} W(\xi_k;N_k,h^{+}_{k+1}(\xi);q) &= q^{2N_kh^{+}_{k+1,0}(\xi)}q^{-N_k(N_k-1)}q^{-\xi_k(2h^{+}_{k+1,0}(\xi)+1+\xi_k-2N_k)} \qbinom{q^2}{N_k}{\xi_k} \\
	& = q^{-2h^{+}_{k+1,0}(\xi)(\xi_k-N_k)}q^{-N_k(N_k-1)} q^{\xi_k(3N_k-2\xi_k-1)} w(\xi_k;N_k;q),
\end{split}
\]
where $h^{+}_{k,0}=\sum_{j=k}^M (2\xi_j-N_j)$ and we used $(-aq^{-2\rho};q^2)_N = a^N q^{N(N-1)}q^{-2\rho N} + \mathcal O(q^{-2\rho(N-1)})$ for $\rho \to \infty$,
and
\[
\begin{split}
	\lim_{\rho \to -\infty} q^{2\rho\xi_k} W(\xi_k;N_k,h^{+}_{k+1}(\xi);q) &= q^{-\xi_k(2h^{+}_{k+1,0}(\xi)-1+\xi_k)} \qbinom{q^2}{N_k}{\xi_k} \\& = q^{-\xi_k(2\xi_k-1-N_k+2h^{+}_{k+1,0}(\xi))} w(x_k;N_k;q).
\end{split}
\]
Recalling that $w(x;N_k;q)=w(x;N_k;q^{-1})$ and using \eqref{eq:identity |A||B|} and $\sum_{k=1}^M N_k [\xi_k+\sum_{j=k+1}^M 2\xi_j] = -u(\xi;\vec{N})$, it follows that
\[
\begin{split}
	\lim_{\rho \to \infty} & q^{2\rho(|\xi|-|\vec{N}|)} W_{\mathsf R}(\xi;\vec{N},\rho;q) \\
	& = q^{|\vec{N}|-|\xi| - \sum_{k=1}^M \big( N_k [N_k- 2\sum_{j=k+1}^M (2\xi_j-N_j) ] + 2\xi_k [\xi_k + \sum_{j=k+1}^M(2\xi_j-N_j)] - 3\xi_k N_k\big)} \prod_{k=1}^M w(\xi_k;N_k;q) \\ 
	& = q^{|\vec{N}|-|\xi|(|\xi|+1)-  (|\vec{N}|-|\xi|)^2 } w(\xi;\vec{N};q^{-1}),
\end{split}
\]
and
\[
\begin{split}
	\lim_{\rho \to -\infty} q^{2\rho|\xi|} W_{\mathsf R}(\xi;\vec{N},\rho;q) &=  q^{|\xi|-\sum_{k=1}^M \xi_k [2\xi_k+\sum_{j=k+1}^M 4\xi_j-\sum_{j=k+1}^M 2N_j - N_k]} \prod_{k=1}^M w(\xi_k;N_k;q) \\
	& = q^{|\xi|(1-2|\xi|+2|\vec{N}|)} w(\xi;\vec{N};q),
\end{split}
\]
which proves \eqref{eq:rho->infty W->w}. 

Similarly,
\[
\begin{split}
	\lim_{\la \to \infty } q^{-2\zeta_k \la} W(\zeta_k;N_k, h_{k-1}^-(\zeta);q^{-1}) &= q^{\zeta_k[-1+2\zeta_k-N_k+2h_{k-1,0}^-(\zeta)]} w(\zeta_k;N_k;q) \\
	\lim_{\la \to -\infty } q^{2\la(N_k-\zeta_k)} W(\zeta_k;N_k, h_{k-1}^-(\zeta);q^{-1}) & = q^{2h_{k-1,0}^-(\zeta)(\zeta_k-N_k)} q^{N_k(N_k-1)} q^{-\zeta_k(3N_k-2\zeta_k-1)} w(\zeta_k;N_k;q),
\end{split}
\]
from which it follows that
\[
\begin{split}
	\lim_{\la \to \infty} q^{-2\la|\zeta|} W_{\mathsf L}(\zeta;\vec{N},\la;q) &= q^{|\zeta|(2|\zeta|-1)}w(\zeta;\vec{N};q),\\
	\lim_{\la \to -\infty} q^{2\la(|\vec{N}|-|\zeta|)} W_{\mathsf L}(\zeta;\vec{N},\la;q) & = q^{-|\vec{N}|+|\zeta|(|\zeta|+1)+ 2(|\vec{N}|-|\zeta|)^2-2|\zeta| |\vec{N}| } w(\zeta;\vec{N};q^{-1}).
\end{split}
\]
This proves \eqref{eq:la->pm infty W->w}.

\subsection{Proof of Propositions \ref{prop:degenerate duality}(i) and \ref{prop:degenerate orthogonality}(i)}\ \\
We take the limit $\lambda \to \infty$ of a multiple of the duality function $R$ defined as a product of $1$-site duality functions $r$ by \eqref{eq:r=4phi3}. We replace $v$ by $vq^{-\lambda}$ and then let $\lambda \to \infty$. This gives
\[
\begin{split}
	\lim_{\lambda \to \infty} &q^{2\lambda \zeta_k}r(\zeta_k,\xi_k;vq^{-\lambda})\\ & =v^{\zeta_k}\frac{ (-vq^{h^{+}_{k+1}(\xi)+h^{-}_{k-1,0}(\zeta)-N_k+1};q^2)_{\xi_k} (vq^{1+2\zeta_k -h^{+}_{k+1}(\xi) +h^{-}_{k-1,0}(\zeta) - N_k};q^2)_{N_k}}{q^{\zeta_k(\zeta_k+h^{+}_{k+1}(\xi)+h^{-}_{k-1,0}(\zeta)-N_k)} (vq^{1-2\xi_k -h^{+}_{k+1}(\xi) +h^{-}_{k-1,0}(\zeta) + N_k};q^2)_{\zeta_k+\xi_k} } \\
	& \quad \times 
	\rphis{3}{2}{q^{-2\xi_k}, -q^{2h^{+}_{k+1}(\xi)-2N_k+2\xi_k}, q^{-2\zeta_k}}{-vq^{h^{+}_{k+1}(\xi)+h^{-}_{k-1,0}(\zeta)-N_k+1}, q^{-2N_k}}{q^2,q^2}\\ &  = p(\zeta_k,\xi_k;h^{-}_{k-1,0}(\zeta),N_k,h^{+}_{k+1}(\xi),v,N_k;q),
\end{split}
\]
where we use the 1-site duality function $p$ defined in \eqref{eq:1sitedualityHahn}.
It follows that
\[
\lim_{\lambda \to \infty} q^{2\lambda|\zeta|} R^{vq^{-\la}}(\zeta,\xi) = P^v_{\mathsf R}(\zeta,\xi),
\]
which proves Proposition \ref{prop:degenerate duality}(i). In the same way, we find
\[
\begin{split}
	\lim_{\lambda \to \infty} r(\zeta_k,\xi_k;v^{-1}q^{\lambda}) & = v^{-\zeta_k}\frac{  (-vq^{h^{+}_{k+1}(\xi)+h^{-}_{k-1,0}(\zeta)-N_k+1};q^2)_{\zeta_k}}{(q^{\zeta_k+h^{+}_{k+1}(\xi)+h^{-}_{k-1,0}(\zeta)-N_k})^{\zeta_k}} 	\\
	& \quad \times	\rphis{3}{2}{q^{-2\xi_k}, -q^{2h^{+}_{k+1}(\xi)-2N_k+2\xi_k}, q^{-2\zeta_k}}{-vq^{h^{+}_{k+1}(\xi)+h^{-}_{k-1,0}(\zeta)-N_k+1}, q^{-2N_k}}{q^2,q^2}.
\end{split}
\]
It follows that
\[
\lim_{\lambda \to \infty}R^{v^{-1}q^\lambda}(\zeta,\xi)=  v^{-2|\zeta|}\frac{C^v(|\zeta|,|\xi|;0,\rho)}{c^{v}(|\zeta|,|\xi|;0,\rho)} P^v_{\mathsf R}(\zeta,\xi), 
\]
where $c^v$ is the function given in \eqref{eq:invariant c def},
\begin{equation*} 
	\begin{split}
		c^v(|\zeta|,|\xi|;\lambda,\rho) &= \prod_{k=1}^M \frac{ (vq^{2\zeta_k-h^{+}_{k+1}(\xi)+h^{-}_{k-1}(\zeta)-N_k+1};q^2)_{N_k} }{ (vq^{-2\xi_k-h^{+}_{k+1}(\xi)+h^{-}_{k-1}(\zeta)+N_k+1};q^2)_{\xi_k+\zeta_k} } =  \frac{ (vq^{\lambda-\rho+2|\zeta|-|\vec{N}|+1};q^2)_{|\vec{N}|-|\zeta|} }{(vq^{\lambda-\rho-2|\xi|+|\vec{N}|+1};q^2)_{|\xi|}},
	\end{split}
\end{equation*} and $C^v$ is given in \eqref{eq:identity C^0},
\begin{equation*} 
C^v(|\zeta|,|\xi|;\lambda,\rho) = \prod_{k=1}^M \frac{(-vq^{h^{+}_{k+1}(\xi)+h^{-}_{k-1}(\zeta)-N_k+1};q^2)_{\xi_k}}{ (-vq^{h^{+}_{k+1}(\xi)+h^{-}_{k-1}(\zeta)-N_k+1};q^2)_{\zeta_k}} = 	\frac{(-vq^{\lambda+\rho-|\vec{N}|+1};q^2)_{|\xi|}}{(-vq^{\lambda+\rho-|\vec{N}|+1};q^2)_{|\zeta|}}.
\end{equation*}

Finally, replacing $v$ by $vq^{-\la}$ in the orthogonality relations from Theorem \ref{Thm:orthogonaldualityLASEPRASEP} and letting $\la \to \infty$ using the limits of $W_{\mathsf L}$ form Section \ref{ssec:limits reversible measure}, gives the orthogonality relations for $P_{\mathsf R}^v$ as stated in Proposition \ref{prop:degenerate orthogonality}(i).

\subsection{Proof of Propositions \ref{prop:degenerate duality}(ii) and \ref{prop:degenerate orthogonality}(ii)}\ \\
For the first statement, let $\rho \to -\infty$ in $v= $ in $P_{\mathsf R}^{vq^{\rho}}$ to obtain
\[
\begin{split}
	\lim_{\rho \to -\infty} & \rphis{3}{2}{q^{-2\eta_k}, -q^{2h^{+}_{k+1}(\xi)-2N_k+2\xi_k}, q^{-2\xi_k}}{-v q^{\rho+h^{+}_{k+1}(\xi)+h^{-}_{k-1,0}(\eta)-N_k+1} , q^{-2N_k}}{q^2,q^2} =  \\ &=\rphis{2}{1}{q^{-2\eta_k},  q^{-2\xi_k}}{q^{-2N_k}}{q^2, v^{-1} q^{1+2\xi_k+h^{+}_{k+1,0}(\xi)-h^{-}_{k-1,0}(\eta)-N_k} } \\
	& = K_{\xi_k}^{\text{qtm}}(\eta_k; p_{k,v}(\eta,\xi),N_k;q^2),
\end{split}
\]
where $p_{k,v}(\eta,\xi) = v^{-1} q^{h^{+}_{k+1,0}(\xi)-h^{-}_{k-1,0}(\eta)-N_k-1}$. Then using
\[
\lim_{\rho \to -\infty}v^{-2\eta_k} q^{-2\rho \eta_k}(v q^{N_k-h^{+}_{k+1,0}(\xi)-h^{-}_{k-1,0}(\eta)-\eta_k})^{\eta_k} (-v q^{2\rho+h^{+}_{k+1,0}(\xi)+h^{-}_{k-1,0}(\eta)-N_k+1};q^2)_{\eta_k}  = 1,
\]
it follows that
\[
\lim_{\rho \to -\infty} \frac{ (v^{-2}q^{-2\rho})^{|\eta|} }{c^v(|\eta|,|\xi|;0,0)C^v(|\eta|,|\xi|;0,2\rho)} P_{\mathsf R}^{v q^\rho}(\eta,\xi) = K^v_\mathrm{qtm}(\eta,\xi).
\]
Using the above limit relations and letting $\rho\to-\infty$ in Proposition \ref{prop:degenerate orthogonality}(i) then gives the orthogonality of \ref{prop:degenerate orthogonality}(ii).

\subsection{Proof of Propositions \ref{prop:degenerate duality}(iii) and \ref{prop:degenerate orthogonality}(iii)}\ \\
Letting $\rho\to\infty$ in $P_{\mathsf R}^{v q^{-\rho}}$ we get
\[
\begin{split}
	\lim_{\rho \to \infty} &\rphis{3}{2}{q^{-2\eta_k}, -q^{2h^{+}_{k+1}(\xi)-2N_k+2\xi_k}, q^{-2\xi_k}}{-v q^{h^{+}_{k+1,0}(\xi)+h^{-}_{k-1,0}(\eta)-N_k+1}, q^{-2N_k}}{q^2,q^2}  \\
	&\hspace{1cm}= \rphis{3}{2}{q^{-2\eta_k}, 0, q^{-2\xi_k}}{-v q^{h^{+}_{k+1,0}(\xi)+h^{-}_{k-1,0}(\eta)-N_k+1}, q^{-2N_k}}{q^2,q^2} \\&\hspace{1cm} =  K_{\xi_k}^{\text{aff}}(\eta_k;p'_{k,v}(\eta,\xi),N_k;q^2),
\end{split}
\]
where $p'_{k,v}(\eta,\xi) = -v q^{h^{+}_{k+1,0}(\xi)+h^{-}_{k-1,0}(\eta)-N_k-1}$, from which it follows that
\[
\lim_{\rho \to \infty} \frac{ q^{2\rho|\eta|}  }{c^{-v}(|\eta|,|\xi|;0,2\rho)} P^{-v q^{-\rho}}_{\mathsf R}(\eta,\xi) = K^v_\mathrm{aff}(\eta,\xi). 
\]
Again, using the above limit relations and letting $\rho\to\infty$ in Proposition \ref{prop:degenerate orthogonality}(i) gives the orthogonality of \ref{prop:degenerate orthogonality}(iii).
\medskip

\subsection{Proof of Proposition \ref{prop:KR->triangular duality}}
For the first statement, we use 
\begin{align}
	q^{-\rho|\eta|}K_\mathsf{R}(\eta,\xi) = q^{-\half u(\eta,\vec{N})}\prod_{k=1}^M (-1)^{\eta_k} q^{\half\eta_k(N_k-1)}q^{-\eta_k(\rho+h_{k+1}^+)}K_{\eta_k}(\xi_k;q^{2h_{k+1}^+},N_k;q^2).\label{eq:KrawtchoukTriangular}
\end{align}
Writing out the explicit expression of the $q$-Krawtchouk polynomials in terms of $q$-hypergeometric functions we obtain
\begin{align*}
	\lim\limits_{\rho\to-\infty} q^{-2\eta_k\rho} K_{\eta_k}(\xi_k;q^{2h_{k+1}^+},N_k;q^2)  &= \lim\limits_{\rho\to-\infty}q^{-2\eta_k\rho} \rphis{3}{2}{q^{-2\eta_k}, q^{-2\xi_k},  -q^{2h^{+}_{k+1}+2\xi_k-2N_k} }{q^{-2N_k},0}{q^2,q^2}\\
	&= (-1)^{\eta_k} \frac{(q^{-2\xi_k};q^2)_{\eta_k} }{(q^{-2N_k};q^2)_{\eta_k} } q^{\eta_k(\eta_k+2h_{k+1}^++2\xi_k-2N_k+ 1)}
\end{align*}
if $\eta_k\leq \xi_k$ and $0$ otherwise. Taking the limit of $\rho\to-\infty$ in \eqref{eq:KrawtchoukTriangular} then gives the desired outcome.\\
\\
For the second statement we use the identity \eqref{eq:KrawtchoukRhoto-Rho} with $\rho$ replaced by $-\rho$,
\[
k(n,x;\rho,N;q) = (-1)^n k(n,N-x;-\rho,N;q)
\]
to obtain
\begin{align}
(-q^{\rho})^{|\eta|}K_{\mathsf R}(\eta,\xi) =q^{-\half u(\eta,\vec{N})}\prod_{k=1}^M (-1)^{\eta_k} q^{\half\eta_k(N_k-1)}q^{\eta_k(\rho+h_{k+1}^+)}K_{\eta_k}(N_k-\xi_k;q^{-2h_{k+1}^+},N_k;q^2) \label{eq:KrawtchoukTriangular2}
\end{align}
Writing out the explicit expression of the $q$-Krawtchouk polynomials again, we obtain
\begin{align*}
	\lim\limits_{\rho\to\infty} q^{2\eta_k\rho} K_{\eta_k}(N_k-\xi_k;q^{-2h_{k+1}^+},N_k;q^2)  &= \lim\limits_{\rho\to\infty}q^{2\eta_k\rho} \rphis{3}{2}{q^{-2\eta_k}, q^{2\xi_k-2N_k},  -q^{-2h^{+}_{k+1}-2\xi_k} }{q^{-2N_k},0}{q^2,q^2}\\
	&= (-1)^{\eta_k} \frac{(q^{2\xi_k-N_k};q^2)_{\eta_k} }{(q^{-2N_k};q^2)_{\eta_k} } q^{\eta_k(\eta_k-2h_{k+1}^+-2\xi_k-2N_k+ 1)}
\end{align*}
if $\eta_k\leq N_k-\xi_k$ and $0$ otherwise. Taking the limit of $\rho\to\infty$ in \eqref{eq:KrawtchoukTriangular2} then gives the desired outcome.
\subsection{Proof of Proposition \ref{prop:duality q-TAZRP}} 
This follows from writing the $q$-Racah duality function $R$ given by \eqref{eq:duality Racah polynomials} explicitly in terms of $_4\varphi_3$-functions using \eqref{eq:racah4phi3} and \eqref{eq:1-site duality r}, together with the following limit, 
\[
\begin{split}
	\lim_{N\to \infty} & \rphis{4}{3}{q^{-2\zeta_k}, -q^{2h^{+}_{k+1}(\xi)-2N+2\xi_k}, q^{-2\xi_k}, -q^{2h^{-}_{k-1}(\zeta)-2N+2\zeta_k} }{-v^{-1}  q^{h^{+}_{k+1}(\xi)+h^{-}_{k-1}(\zeta)-N+1}, -v q^{h^{+}_{k+1}(\xi)+h^{-}_{k-1}(\zeta)-N+1},q^{-2N}}{q^2,q^2} \\
	& = \rphis{2}{0}{q^{-2\zeta_k},q^{-2\xi_k}}{-}{q^2,q^{2\zeta_k+2\xi_k}} = q^{2\zeta_k \xi_k},
\end{split}
\]
where we used a limit case of the $q$-Chu-Vandermonde summation identity \cite[(II.6)]{GR} in the last step.

\subsection{Proof of Proposition \ref{prop:duality Racah}}
Let us first recall the following identities,
\[
\lim_{q \to 1} \frac{ (q^{2a};q^2)_n }{(1-q^2)^n} = (a)_n, \qquad a \in \R, \ n \in \Z_{\geq 0},
\]
and, for $a_1,\ldots,a_{r}, b_1,\ldots,b_r \in \R$ and $n \in \Z_{\geq 0}$,
\[
\lim_{q \to 1} \rphis{r+1}{r}{q^{-n},q^{a_1}, \ldots, q^{a_{r}}}{q^{b_1},\ldots,q^{b_r}}{q,z} = \rFs{r+1}{r}{-n,a_1,\ldots,a_{r}}{b_1,\ldots,b_r}{z}.
\]
Then we have the following limit of the 1-site duality function $r$ defined by \eqref{eq:1-site duality r},
\[
\begin{split}
\lim_{q \to 1} &(1-q^2)^{-N} r(y,x;\lambda+\pi i/2\ln(q),\rho+i\pi/2\ln(q),q^v,N;q)  \\
&=\frac{(\frac12(\rho+\lambda-N+v+1))_x (\frac12(\rho+\lambda-N-v+1))_y (y+\frac12(\lambda-\rho-N+v+1))_N }{ (-x+\frac12(\lambda-\rho+N+v+1))_{x+y} } \\
&\quad \times \rFs{4}{3}{-y,\rho-N+y,-x,\la-N+x}{\frac12(\rho+\la-N+1-v), \frac12(\rho+\la-N+1+v),-N}{1}\\
& = \hat{r}(y,x;\la,\rho,v,N),
\end{split}
\]
where $\hat{r}$ is the 1-site duality function given in Proposition \ref{prop:duality Racah}. It follows that
\[
\lim_{q \to 1} (1-q^2)^{-|\vec{N}|} R^{q^v}(\zeta,\xi) = \hat{R}^v(\zeta,\xi).
\]
where $\lambda\to\lambda+\pi i/2\ln(q)$ and $\rho\to \rho+i\pi/2\ln(q)$ in $R^v$.
The orthogonality relations follow by taking the limit in the orthogonality relations in Theorem \ref{Thm:orthogonaldualityLASEPRASEP} using the above limit and the limits \eqref{eq:WR WL q->1} of the reversible measures.

\subsection{Proof of Propositions \ref{prop:degenerate duality sym}(i) and \ref{prop:degenerate orthogonality sym}(i)}
Similar to the previous section we can take limits in the 1-site duality functions $p$ defined in \eqref{eq:1sitedualityHahn},
\[
\begin{split}
\lim_{q \to 1}& (1-q^2)^{y-N} p(y,x;\la,\rho+i\pi/2\ln(q),iq^v,N;q)= \\ 
& \frac{(\frac12(\rho+\la-N+v+1))_x (y+\frac12(\la-\rho-N+v+1))_N } {(-x+\frac12(\la-\rho+N+v+1))_{x+y}} 
 \rFs{3}{2}{-x,\rho-N+x,-y}{\frac12(\rho+\la-N+v+1), -N}{1},
\end{split}
\]
from which it follows that 
\[
\hat{P}_{\mathsf R}(\eta,\xi) = \lim_{q \to 1} (1-q^2)^{|\zeta|-|\vec{N}|} P^{iq^v}_{\mathsf R}(\eta,\xi),
\]
where $\rho\to \rho+i\pi/2\ln(q)$ in $P_{\mathsf{R}}^{iq^v}$.
The orthogonality relations of Proposition \ref{prop:degenerate orthogonality sym}(i) are obtained by letting $q\to 1$ in the orthogonality relations from Proposition \ref{prop:degenerate orthogonality}(i), using the stated limit of the duality function $P^v_{\mathsf R}$, the limits of the reversible measures $w$ and $W_{\mathsf R}$, and the following limits of the invariant functions $\om^p$ and $\om_\mathsf{R}^p$ (see Appendix \ref{app:overview}),
\[
\begin{split}
&\lim_{q \to 1} (1-q^2)^{|\vec{N}|-2x} \om^{\mathrm{p}}(x;\vec{N},\rho+i\pi/2\ln(q),iq^v;q) =\frac{(-1)^{x}(\frac12(\rho-|\vec{N}|+v+1))_x}{ (x+\frac12(v-\rho-|\vec{N}|+1))_{|\vec{N}|-x}},\\
&\lim_{q \to 1} \om_\mathsf{R}^{\mathrm p}(x;\vec{N},\rho+i\pi/2\ln(q),iq^v;q) = \frac{ (-x+\tfrac12(v-\rho+|\vec{N}|+1))_x}{ (\tfrac12(v+\rho-|\vec{N}|+1))_x}.
\end{split}
\]

\subsection{Proof of Propositions \ref{prop:degenerate duality sym}(ii) and \ref{prop:degenerate orthogonality sym}(ii)}
This follows from writing $K_{\mathsf R}$ in terms of the $1$-site duality functions $k$, see Appendix \ref{app:overview}. Writing the function $k$ in terms of a $_3\varphi_2$-function gives
\[
\begin{split}
	\lim_{q \to 1} (-1)^n v^{\frac12 \eta_k} q^{\frac12\eta_k(N_k-1)} &\rphis{3}{2}{q^{-2\eta_k},q^{-2\xi_k}, -v q^{2h^+_{k+1,0}(\xi)+2\xi_k-2N_k}}{q^{-2N_k},0}{q^2,q^2} \\ &=v^{\frac12 \eta_k} \rFs{2}{1}{-\eta_k,-\xi_k}{-N_k}{1+v} \\& = \hat{K}_{\eta_k}(\xi_k;\tfrac1{1+v},N_k).
\end{split}
\]
The orthogonality for $\hat K$ from Proposition \ref{prop:degenerate orthogonality sym}(ii) then follows from taking the limit in the orthogonality relations for $K_{\mathsf R}$ from Theorem \ref{Thm:reversiblemeasures}, using the limits of the reversible measures $w$ and $W_{\mathsf R}$.

\newpage
	
\appendix
\section{Overview duality functions and coefficients}\label{app:overview}
The duality function on top of all other duality functions in this paper is the multivariate $q$-Racah polynomial $R(\eta,\xi;v)$, where $\eta$ is the configuration of $\LASEP{q,\vec{N},\lambda$}, $\xi$ of $\RASEP{q,\vec{N},\rho}$ and $v\in\R^\times$ is a free parameter. When taking appropriate limits of $\lambda,\rho\to\pm\infty$, when can obtain other dualities, found in the table below. \\
\begin{table}[h]
	\begin{tabular}{|l|ll|l|l|l|}
		\hline
		Duality &\multicolumn{1}{|l|}{$\eta$-process} & $\xi$-process & Corresponding limit& DF (1)& DF (2) \\  \hline
		(i)&\multicolumn{1}{|l|}{$\ASEP{q,\vec{N}}$}       & $\RASEP{q,\vec{N},\rho}$      &       $\lambda\to\infty$    &   $P^v_\mathsf{R}(\eta,\xi)$ & $K_\mathsf{R}(\eta,\xi)$       \\ \hline
		(ii)&\multicolumn{1}{|l|}{$\ASEP{q,\vec{N}}$ }      &  $\ASEP{q,\vec{N}}$       &  $\lambda\to\infty$, $\rho\to-\infty$ &      $K^v_\mathrm{qtm}(\eta,\xi)$ & Triangular          \\ \hline
		(iii)&\multicolumn{1}{|l|}{$\ASEP{q,\vec{N}}$}       &  $\ASEP{q^{-1},\vec{N}}$       &     $\lambda,\rho\to\infty$    &     $K^v_\mathrm{aff}(\eta,\xi)$  & Triangular      \\ \hline
	\end{tabular}\\
	\hspace{0.3cm}
	\caption{Overview dualities, where DF is short for duality function.}
\end{table} \vspace{-0.7cm}\\
 The functions are given by
\begin{itemize}
	\item $\displaystyle R^v(\zeta,\xi)= \prod_{k=1}^M r( \zeta_k,\xi_k;h^-_{k-1}(\zeta),h^+_{k+1}(\xi),v,N_k;q) $,
	\item $\displaystyle P^v_\mathsf{R}(\eta,\xi)=\prod_{k=1}^M p(\eta_k,\xi_k;h^-_{k-1,0}(\eta),h^+_{k+1}(\xi),v,N_k;q)$,
	\item $\displaystyle K_\mathsf{R}(\eta,\xi)=q^{-\half\sum_{k=1}^M  \Big(\eta_kN_k-2\eta_k\sum_{j=1}^{k}  N_j \Big)}\prod_{k=1}^M  k(\eta_k,\xi_k;h^+_{k+1}(\xi),N_k;q)$,
	\item $\displaystyle K^v_\mathrm{qtm}(\eta,\xi)=\prod_{k=1}^M k^\mathrm{qtm}(\eta_k,\xi_k;h^-_{k-1,0}(\eta),h^+_{k+1,0}(\xi),v,N_k;q)$,
	\item $\displaystyle K^v_\mathrm{aff}(\eta,\xi)=\prod_{k=1}^M k^\mathrm{aff}(\eta_k,\xi_k;h^-_{k-1,0}(\eta),h^+_{k+1,0}(\xi),v,N_k;q)$,
\end{itemize}
where the 1-site duality functions are given by,
\begin{itemize}
	\item $\displaystyle r(y,x)=c_{\mathrm r}(y,x;\lambda,\rho,v,N;q) R_x(y;-v^{-1}q^{\rho+\lambda-N-1}, vq^{\rho-\lambda-N-1}, q^{-2N-2}, -q^{2\lambda};q^2)$,\vspace{0.1cm} 
	\item $\displaystyle p(n,x) =  c_{\mathrm p}(n,x;\lambda,\rho,v,N;q) P_x(n;-vq^{\rho+\lambda-N-1}, v^{-1}q^{\rho-\lambda-N-1},N;q^2)$,\vspace{0.1cm} 
	\item $\displaystyle k(n,x) = c_{\mathrm k}(n;\rho,N;q)  K_n(x;q^{2\rho},N;q^2)$,\vspace{0.1cm} 
	\item $\displaystyle k^{\mathrm{qtm}}(n,x)=K_{x}^{\mathrm{qtm}}(n; v^{-1} q^{\rho-\lambda-N-1},N;q^2)$,\vspace{0.1cm} 
	\item $\displaystyle k^\mathrm{aff}(n,x)= c_{\mathrm k}^\mathrm{aff}(x;\lambda,\rho,v,N;q)K_{x}^{\mathrm{aff}}(n;v q^{\rho+\lambda-N-1},N;q^2)$,\vspace{0.1cm} 
\end{itemize}
the coefficients by
\begin{itemize}
	\item $\displaystyle c_{\mathrm r}(y,x;\lambda,\rho,v,N;q) = v^{y}\frac{(-vq^{\rho+\lambda-N+1};q^2)_{x} (-v^{-1}q^{\rho+\lambda-N+1};q^2)_{y} (vq^{2y-\rho+\lambda-N+1};q^2)_{N}}{q^{y(y+\rho+\lambda-N)}(vq^{-2x-\rho+\lambda+N+1};q^2)_{x+y}} $,\vspace{0.2cm}\\
	\item $\displaystyle c_{\mathrm p}(n,x;\lambda,\rho,v,N;q) = v^n  \frac{(-vq^{\rho+\lambda-N+1};q^2)_x (vq^{2n-\rho+\lambda-N+1};q^2)_N}{q^{n(n+\rho+\lambda-N)}(vq^{-2x-\rho+\lambda+N+1};q^2)_{x+n}}$,\vspace{0.0cm}\\
	\item $\displaystyle c_{\mathrm k}(n;\rho,N;q)=(-1)^n  q^{-n\rho} q^{\frac12n(N-1)}$,\\
	\item $\displaystyle c_{\mathrm k}^\mathrm{aff}(n,x;\lambda,\rho,v,N;q)=(-v)^n\frac{(v q^{\rho+\lambda-N+1};q^2)_{x}}{q^{n(n+\rho+\lambda-N)}}  $,\\
\end{itemize}
and the polynomials by 
\begin{itemize}
	\item $\displaystyle R_n(x;\alpha,\beta,\ga,\de;q) = \rphis{4}{3}{q^{-n},\alpha\beta q^{n+1}, q^{-x}, \gamma \delta q^{x+1} }{\alpha q, \beta \delta q, \gamma q}{q,q}$\hspace{0.5cm} ($q$-Racah),
	\item $\displaystyle P_n(x;\alpha,\beta,N;q) = \rphis{3}{2}{q^{-n},\alpha\beta q^{n+1},q^{-x}}{\alpha q, q^{-N}}{q,q}$\hspace{1.95cm} ($q$-Hahn),
	\item $\displaystyle K_n(x;c,N;q) = \rphis{3}{2}{q^{-n}, q^{-x}, -cq^{x-N} }{q^{-N}, 0 }{q,q}.$\hspace{2.1cm} ((Dual) $q$-Krawtchouk),
	\item $\displaystyle K^{\text{qtm}}_n(x;p,N;q)= \rphis{2}{1}{q^{-n}, q^{-x}}{q^{-N}}{q, pq^{n+1}}$\hspace{2.5cm} (Quantum $q$-Krawtchouk),
	\item $\displaystyle K^{\text{aff}}_n(x;p,N;q) = \rphis{3}{2}{q^{-n},0,q^{-x}}{pq,q^{-N}}{q,q}$\hspace{3.1cm} (Affine $q$-Krawtchouk).
\end{itemize}
The factors in front of the reversible measures in Proposition \ref{prop:degenerate orthogonality} are given by
\begin{itemize}
	\item 	$\displaystyle\om^p(x) =  \frac{v^{-2x}q^{x(2x-1)}(-vq^{\rho-|\vec{N}|+1};q^2)_x}{(vq^{-\rho+2x-|\vec{N}|+1};q^2)_{|\vec{N}|-x}}$,\vspace{0.2cm}\\
	\item	$\displaystyle\om_\mathsf{R}^p(x)= \frac{(vq^{-\rho-2x+|\vec{N}|+1};q^2)_x}{(-vq^{\rho-|\vec{N}|+1};q^2)_{x}}$,\\
	\item	$\displaystyle		\om^\mathrm{qtm}(x) = v^{-x} q^{x(x+|\vec{N}|-1)}(v q^{2x-N+1};q^2)_{|\vec{N}|-x}$,\\
	\item	$\displaystyle	\om^\mathrm{qtm}_\mathsf{R}(x) = \frac{v^x  q^{x(|\vec{N}|-x+1)}  }{ (v q^{1+|\vec{N}|-2x};q^2)_x}$,
	\item	$\displaystyle\om^\mathrm{aff}(x) = v^{|\vec{N}|-3x} q^{x(|\vec{N}|+x-1)}  (v q^{1-|\vec{N}|};q^2)_x$,\\
	\item	$\displaystyle	\om_{\mathsf{R}}^\mathrm{aff}(x)  = \frac{q^{|\vec{N}|(1-|\vec{N}|)+x(|\vec{N}|-x-1)}}{(v q^{1-|\vec{N}|};q^2)_x}$.
\end{itemize}
The factors in front of the reversible measures in Proposition \ref{prop:degenerate orthogonality sym} are given by
\begin{itemize}
	\item $\displaystyle \hat{\om}^{\mathrm p}(x) = \frac{(-1)^{x}(\frac12(\rho-|\vec{N}|+v+1))_x}{ (x+\frac12(v-\rho-|\vec{N}|+1))_{|\vec{N}|-x}}$,
	\item $\displaystyle \hat{\om}^{\mathrm p}_\mathsf{R}(x)= \frac{ (-x+\tfrac12(v-\rho+|\vec{N}|+1))_x}{ (\tfrac12(v+\rho-|\vec{N}|+1))_x}$,
	\item $\displaystyle \hat \om^{\mathrm k}(x) = \frac{ v^{-x} }{(1+\frac1v)^{|\vec{N}|}}$.
\end{itemize}

\section{Identities} \label{sec:appendix Identities}
In this section, we state some useful identities we use throughout the paper.
\begin{itemize}
\item For $A,B \in \R^M$,
\begin{equation} \label{eq:identity |A||B|}
\begin{split}
|A||B| & =\sum_{k=1}^M \sum_{j=k+1}^M (A_k B_j + A_jB_k)  + \sum_{k=1}^M A_k B_k \\
& = \sum_{k=1}^M \sum_{j=1}^{k-1} (A_k B_j + A_jB_k) + \sum_{k=1}^M A_kB_k.
\end{split}
\end{equation}
\item We have the identity
\begin{equation} \label{eq:orthokrawtchoukASEP q<->q_inv}
	w(n;N;q)=w(n;N;q^{-1})
\end{equation}
for the single site weight function $w$ defined in \eqref{eq:orthokrawtchoukASEP}. The identity directly follows from the readily verified formula
\[(a^{-1};q^{-2})_n = (-1)^n a^{-n} q^{-n(n-1)} (a;q^2)_n,\qquad a\in\R^\times. \]
\item We verify the following identity, stated in \eqref{eq:invariant c}: for $q>0$ and $q\neq 1$,
\[
\prod_{k=1}^M   \frac{ (vq^{2\zeta_k-h^{+}_{k+1}(\xi)+h^{-}_{k-1}(\zeta)-N_k+1};q^2)_{N_k} }{ (vq^{-2\xi_k-h^{+}_{k+1}(\xi)+h^{-}_{k-1}(\zeta)+N_k+1};q^2)_{\xi_k+\zeta_k} }  = \frac{ (vq^{\lambda-\rho+2|\zeta|-|\vec{N}|+1};q^2)_{|\vec{N}|-|\zeta|} }{(vq^{\lambda-\rho-2|\xi|+|\vec{N}|+1};q^2)_{|\xi|}}.
\]
First, assume $q<1$. We use the identity
\begin{equation} \label{eq:q-shifted factorial identity}
(a;q)_n = \frac{ (a;q)_\infty}{(aq^n;q)_\infty},\qquad n=0,1,\ldots,
\end{equation} 
then we find that 
\[
\prod_{k=1}^M   \frac{ (vq^{2\zeta_k-h^{+}_{k+1}(\xi)+h^{-}_{k-1}(\zeta)-N_k+1};q^2)_{N_k} }{ (vq^{-2\xi_k-h^{+}_{k+1}(\xi)+h^{-}_{k-1}(\zeta)+N_k+1};q^2)_{\xi_k+\zeta_k} } 
\]
is equal to	
\[
\prod_{k=1}^M   \frac{ (vq^{2\zeta_k-h^{+}_{k+1}(\xi)+h^{-}_{k-1}(\zeta)-N_k+1};q^2)_{\infty} }{ (vq^{-2\xi_k-h^{+}_{k+1}(\xi)+h^{-}_{k-1}(\zeta)+N_k+1};q^2)_{\infty} }.
\]
Using
\[
h^-_{k-1}(\zeta)+2\zeta_k-N_k=h^-_k(\zeta)\quad \text{and} \quad h^+_{k+1}(\xi)+2\xi_k-N_k=h^+_k(\xi),
\]
this becomes
\[
\begin{split}
\prod_{k=1}^M   \frac{ (vq^{h^-_k(\zeta)-h^{+}_{k+1}(\xi)+1};q^2)_{\infty} }{ (vq^{h^{-}_{k-1}(\zeta)-h^+_k(\xi)+1};q^2)_{\infty} } &= \frac{(vq^{h_M^-(\zeta)-h_{M+1}(\xi)+1};q^2)_\infty}{(vq^{h_0^-(\zeta)-h_1^+(\xi)+1};q^2)_\infty} \\
& = \frac{(vq^{\la-\rho+2|\zeta|-|\vec{N}|+1};q^2)_\infty}{(vq^{\la-\rho-2|\xi|+|\vec{N}|+1};q^2)_\infty}.
\end{split}
\]
Using \eqref{eq:q-shifted factorial identity} again then gives the result for $0<q<1$. Since both sides of the obtained identity are meromorphic functions in $q$, it follows that the result still holds for $q>1$.\\ 

\item Next we verify identity \eqref{eq:identity C^0}: for $q>0$ and $q\neq1$,
\[
\prod_{k=1}^M \frac{(-vq^{h^{+}_{k+1}(\xi)+h^{-}_{k-1}(\zeta)-N_k+1};q^2)_{\xi_k}} {(-vq^{h^{+}_{k+1}(\xi)+h^{-}_{k-1}(\zeta)-N_k+1};q^2)_{\zeta_k}} = 
	\frac{(-vq^{\rho+\lambda-|\vec{N}|+1};q^2)_{|\xi|}}{(-vq^{\rho+\lambda-|\vec{N}|+1};q^2)_{|\zeta|}}.
\]
Indeed, in a similar fashion as before, we have
\[
\begin{split}
\prod_{k=1}^M \frac{(-vq^{h^{+}_{k+1}(\xi)+h^{-}_{k-1}(\zeta)-N_k+1};q^2)_{\xi_k}} {(-vq^{h^{+}_{k+1}(\xi)+h^{-}_{k-1}(\zeta)-N_k+1};q^2)_{\zeta_k}} 
&= \prod_{k=1}^M \frac {(-vq^{h^{+}_{k+1}(\xi)+h^{-}_{k-1}(\zeta)+2\zeta_k-N_k+1};q^2)_{\infty}}{(-vq^{h^{+}_{k+1}(\xi)+h^{-}_{k-1}(\zeta)+2\xi_k-N_k+1};q^2)_{\infty}} \\
& = \prod_{k=1}^M \frac {(-vq^{h^{+}_{k+1}(\xi)+h^{-}_{k}(\zeta)+1};q^2)_{\infty} }{(-vq^{h^{+}_{k}(\xi)+h^{-}_{k-1}(\zeta)+1};q^2)_{\infty}}\\
& =  \frac {(-vq^{\rho+\lambda+2|\zeta|-|\vec{N}|+1};q^2)_{\infty}} {(-vq^{\rho+\lambda+2|\xi|-|\vec{N}|+1};q^2)_{\infty}},
\end{split}
\]
from which the required identity follows.
\end{itemize}

\section{Coefficients recurrence relations $q$-Krawtchouk polynomials}\label{app:coefficients}
		\noindent Let 
		\[
		k(n,x;\rho):=k(n,x;\rho,N;q) = (-1)^n  q^{n\rho} q^{\frac12n(N-1)} K_n(x;q^{2\rho},N;q^2),
		\]
		be the 1-site duality functions from \eqref{eq:1siteduality}, where
		\[
			K_n(x;c,N;q) = \rphis{3}{2}{q^{-n}, q^{-x}, -cq^{x-N} }{q^{-N}, 0 }{q,q}.
		\]
		Then $k(n,x;\rho)$ satisfies the following three $q$-difference equations,
		\begin{align}
			q^{-2n}k(n,x;\rho) =&  a_{-1}(x) k(n,x-1;\rho) +a_0(x) k(n,x;\rho) +a_1(x) k(n,x+1;\rho),\label{eq:qdifkrawtchoukapp}\\
			\begin{split}q^{-2n}k(n,x;\rho) =&  a_{0,2}(x) k(n,x;\rho+2) +a_{-1,2}(x) k(n,x-1;\rho+2) \\
				&+a_{-2,2}(x) k(n,x-2;\rho+2),\end{split}\label{eq:dualqkrawtchoukxrho+app}\\
			\begin{split} q^{-2n}k(n,x;\rho) =&  a_{0,-2}(x) k(n,x;\rho-2)  +a_{1,-2}(x) k(n,x+1;\rho-2) \\
				&+a_{2,-2}(x) k(n,x+2;\rho-2).\end{split}\label{eq:dualqkrawtchoukxrho-app}
		\end{align} 
		The coefficients from \eqref{eq:qdifkrawtchoukapp} are given by
		\begin{align*}
			a_{-1}(x) &= -\frac{q^{4x+2\rho-4N-2}(1-q^{2x})(1+q^{2x+2\rho})}{(1+q^{4x+2\rho-2N-2})(1+q^{4x+2\rho-2N})}, \\
			a_0(x) &= -(a_{-1}(x)+a_1(x)-1),\\
						a_{1}(x) &= \frac{(1-q^{2x-2N})(1+q^{2x+2\rho-2N})}{(1+q^{4x+2\rho-2N})(1+q^{4x+2\rho-2N+2})},
		\end{align*}
		from \eqref{eq:dualqkrawtchoukxrho+app} by
		\begin{align*}
			a_{0,2}(x)&=\frac{(1+q^{2\rho+2x-2N})(1+q^{2\rho+2x-2N+2})}{(1+q^{2\rho+4x-2N})(1+q^{2\rho+4x-2N+2})}, \\ a_{-1,2}(x)&=\frac{(1+q^{-2})(1-q^{-2x})(1+q^{2\rho+2x-2N})}{(1+q^{2N-2\rho-4x-2})(1+q^{4x+2\rho-2N-2})}, \\
			a_{-2,2}(x)&=\frac{(1-q^{-2x})(1-q^{-2x+2})}{(1+q^{2N-2\rho-4x})(1+q^{2N-2\rho-4x+2})},
		\end{align*}
		and from \eqref{eq:dualqkrawtchoukxrho-app} by
		\begin{align*}
			a_{0,-2}(x)&=\frac{(1+q^{-2\rho-2x})(1+q^{-2\rho-2x+2})}{(1+q^{2N-2\rho-4x})(1+q^{2N-2\rho-4x+2})},\\
			a_{1,-2}(x)&= \frac{(1+q^{-2})(1-q^{2x-2N})(1+q^{-2\rho-2x})}{(1+q^{2\rho+4x-2N-2})(1+q^{2N-2\rho-4x-2})},\\
			a_{2,-2}(x)&= \frac{(1-q^{2x-2N})(1-q^{2x-2N+2})}{(1+q^{2\rho+4x-2N})(1+q^{2\rho+4x-2N+2})}.
		\end{align*}
		Note that the coefficients from $a_{j,-2}(x)$ can be obtained from $a_{-j,2}(x)$ by replacing $x$ by $N-x$ and $\rho$ by $-\rho$.

\section{$q$-Krawtchouk polynomials as eigenfunctions of twisted primitive elements in $\U_q$} \label{app:eigenfunctions}
	Let $\pi_k$ be the $N_k+1$ dimensional representation defined on functions $f:\{0,1,\ldots,N_k\}\to\C$ by
	 	\begin{equation} \label{eq:representationapp}
		\begin{split}
			[\pi_k(K)f](n ) &= q^{n-\frac12 N_k} f(n), \\
			[\pi_k(E)f](n) &= q^{u_k(\vec{N})} [n]_q f(n-1), \\
			[\pi_k(F) f](n) & = q^{-u_k(\vec{N})} [N_k-n]_q f(n+1),\\
			[\pi_k(K^{-1}) f](n) &= q^{\frac12N_k-n}f(n).
		\end{split}
	\end{equation}
	and let 
	\begin{align*}
		Y_\rho = q^\half EK + q^{-\half}FK - [\rho]_q(K^2-1).
	\end{align*}
	Similar to the proof of Lemma \ref{lem:etatoxi}, we define for this section only,
	\begin{align}
		k(\eta_k,\xi_k;\rho)= q^{\eta_k u_k(\vec{N})}k(\eta_k,\xi_k;q,N_k,\rho)\label{eq:dualqkrawtchoukwithuapp},
	\end{align}
	Then we have that the $q$-Krawtchouk polynomials $k(\cdot,\xi_k;\rho)\to\C$ are eigenfunctions of $\pi_k(Y_\rho)$ (see e.g. \cite{Koo}), 
	\begin{align}
		[\pi_k(Y_\rho)k(\cdot,\xi_k;\rho)](n) = ([\rho]_q-[\rho+2\xi_k-N_k])k(n,\xi_k;\rho).\label{eq:dualqkrawtchoukeigenfunctionapp}
	\end{align}
	Moreover, we have the following result for the coproduct of $Y_\rho$.
	\begin{proposition}
		Taking $M=2$ gives 
		\begin{align*}
			K_\mathsf{R}(\eta,\xi)= k(\eta_1,\xi_1;\rho+2\xi_2-N_2)k(\eta_2,\xi_2;\rho).
		\end{align*}
		Then this is an eigenfunction of $\pitensortwo(\De(Y_\rho))$,
		\begin{align*}
			[\pitensortwo(\De(Y_\rho))K_\mathsf{R}(\cdot,\xi)](\eta) = ([\rho]_q-[\rho+2(\xi_1+\xi_2)-(N_1+N_2)]_q)K_\mathsf{R}(\eta,\xi).
		\end{align*}
	\end{proposition}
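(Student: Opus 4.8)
The plan is to verify directly that $K_\mathsf{R}(\eta,\xi)$ is an eigenfunction of $\pitensortwo(\De(Y_\rho))$ by combining the coproduct formula for $Y_\rho$ with the one-site eigenfunction relation \eqref{eq:dualqkrawtchoukeigenfunctionapp}. The central tool is the identity \eqref{eq:coproductYrho}, namely $\De(Y_\rho) = K^2 \tensor Y_\rho + Y_\rho \tensor 1$, which means
\[
\pitensortwo(\De(Y_\rho)) = \pi_1(K^2)\tensor \pi_2(Y_\rho) + \pi_1(Y_\rho)\tensor \pi_2(1).
\]
The factored structure of this operator matches the factored structure of $K_\mathsf{R}(\eta,\xi)=k(\eta_1,\xi_1;\rho+2\xi_2-N_2)\,k(\eta_2,\xi_2;\rho)$, so the computation should reduce to applying each tensor leg to its corresponding $q$-Krawtchouk factor.

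First I would apply the second summand $\pi_1(Y_\rho)\tensor \pi_2(1)$. The operator $\pi_2(1)$ acts as the identity on the $\xi_2$-factor, while $\pi_1(Y_\rho)$ acts on $k(\cdot,\xi_1;\rho+2\xi_2-N_2)$. Here is the subtlety I would flag as the main obstacle: the parameter of the left $q$-Krawtchouk polynomial is $\rho+2\xi_2-N_2$, not $\rho$, so the naive eigenvalue relation \eqref{eq:dualqkrawtchoukeigenfunctionapp} does not apply verbatim. I would need the eigenfunction relation for $\pi_1(Y_{\rho+2\xi_2-N_2})$ rather than $\pi_1(Y_\rho)$, so the twisted primitive element whose eigenfunctions are these polynomials carries the shifted parameter. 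The resolution is to exploit the relation between $Y_\rho$ and $Y_{\rho'}$: since $Y_\rho = q^{\frac12}EK+q^{-\frac12}FK - [\rho]_q(K^2-1)$, two such elements differ only in the scalar coefficient $[\rho]_q$, so $Y_\rho = Y_{\rho+2\xi_2-N_2} + ([\rho+2\xi_2-N_2]_q-[\rho]_q)(K^2-1)$. I would use this to rewrite $\pi_1(Y_\rho)$ in terms of $\pi_1(Y_{\rho+2\xi_2-N_2})$ plus a multiple of $\pi_1(K^2-1)$, both of which act diagonally on the left factor by \eqref{eq:dualqkrawtchoukeigenfunctionapp} and \eqref{eq:representationapp}.

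Next I would apply the first summand $\pi_1(K^2)\tensor \pi_2(Y_\rho)$. The factor $\pi_2(Y_\rho)$ acts on $k(\cdot,\xi_2;\rho)$ by \eqref{eq:dualqkrawtchoukeigenfunctionapp} with eigenvalue $[\rho]_q-[\rho+2\xi_2-N_2]_q$, and $\pi_1(K^2)$ acts diagonally on the left factor as multiplication by $q^{2\eta_1 - N_1}$ via \eqref{eq:representationapp} — this is where I must be careful with the $u_k(\vec N)$ normalization in \eqref{eq:dualqkrawtchoukwithuapp}, tracking how the $q^{\eta_k u_k(\vec N)}$ factors interact with the $K^2$-action. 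I would then assemble the two contributions and check that the $\eta$-dependent pieces cancel. The expected mechanism is a telescoping: the $[\rho+2\xi_2-N_2]_q$ appearing from the shift-correction in the second summand combines with the eigenvalue from the first summand so that all references to the intermediate shifted parameter drop out, leaving the single clean eigenvalue $[\rho]_q - [\rho+2(\xi_1+\xi_2)-(N_1+N_2)]_q$. Finally I would note $\rho+2(\xi_1+\xi_2)-(N_1+N_2) = h^+_1(\xi)$, recovering \eqref{eq:DeltaYrhoeigenfunctions}. The main obstacle is genuinely the bookkeeping around the parameter shift and the $K^2$-cocycle term; once the decomposition $Y_\rho = Y_{\rho'} + c(K^2-1)$ is in hand, the rest is a diagonal computation.
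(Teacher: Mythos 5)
Your proposal is correct and takes essentially the same route as the paper's own proof: both rest on the decomposition $\De(Y_\rho)=K^2\otimes Y_\rho + Y_\rho\otimes 1$, the one-site eigenvalue relation \eqref{eq:dualqkrawtchoukeigenfunctionapp} applied to each leg, and the parameter-shift identity you write as $Y_\rho = Y_{\rho+2\xi_2-N_2} + ([\rho+2\xi_2-N_2]_q-[\rho]_q)(K^2-1)$, which the paper uses in the equivalent rearranged form $([\rho]_q-[\rho+2\xi_2-N_2]_q)K^2 + Y_\rho = Y_{\rho+2\xi_2-N_2} + [\rho]_q-[\rho+2\xi_2-N_2]_q$. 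The only difference is bookkeeping order: the paper first collapses the $\pi_2(Y_\rho)$ leg to a scalar and then absorbs the resulting $K^2$ term into the shifted twisted primitive element, whereas you decompose $\pi_1(Y_\rho)$ first and check the cancellation of the $\eta$-dependent $q^{2\eta_1-N_1}$ contributions between the two summands explicitly — the telescoping you predict does occur, and your caution about the $u_k(\vec{N})$ normalization is unnecessary since \eqref{eq:dualqkrawtchoukeigenfunctionapp} is stated for the already-normalized functions.
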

	\begin{proof}
		Indeed, since
		\[
		 	\De(Y_\rho)=K^2 \tensor Y_\rho + Y_\rho \tensor 1,
		\]
		we have
		\begin{align*}
			[\pitensortwo(\De(Y_\rho))K_\mathsf{R}(\cdot,\xi)](\eta)=&\Big[\Big( \pi_1(K^2) \tensor \pi_2(Y_\rho) + \pi_1(Y_\rho) \tensor \pi_2(1)  \Big) K_\mathsf{R}(\cdot,\xi)\Big](\eta)  \\
			=& \Big[\Big( \pi_1(([\rho]_q-[\rho+2\xi_2-N_2]_q)K^2+Y_\rho) \tensor \pi_2(1) \Big) K_\mathsf{R}(\cdot,\xi)\Big](\eta), \\
		\end{align*}
		where we used \eqref{eq:dualqkrawtchoukeigenfunctionapp} for $\pi_2(Y_\rho)$. If we now use the explicit expressions for $Y_\rho$, we see that
		\begin{align*}
			([\rho]_q-[\rho+2\xi_2-N_2]_q)K^2 + Y_\rho  =& q^{\frac12}EK + q^{-\frac12} FK- [\rho+2\xi_2-N_2]_qK^2 + [\rho]_q\\
			=& Y_{\rho+2\xi_2-N_2} +[\rho]_q-[\rho+2\xi_2-N_2]_q.
		\end{align*}
		Therefore, applying \eqref{eq:dualqkrawtchoukeigenfunctionapp} with $\rho$ replaced by $\rho+2\xi_2-N_2$, we obtain that
		\begin{align*}
			\big[\pi_1(([\rho]_q-[\rho+2\xi_2-N_2]_q)K^2+Y_\rho)k(\cdot,\xi_1;\rho+2\xi_2-N_2)\big](\eta_1)
		\end{align*}
		is equal to
		\begin{align*}
			([\rho]_q-[\rho+2(\xi_1+\xi_2)-(N_1+N_2)]_q)k(\eta_1,\xi_1;\rho+2\xi_2-N_2),
		\end{align*}
		Therefore,
		\[
			[\pitensortwo(\De(Y_\rho))K_\mathsf{R}(\cdot,\xi)](\eta) = ([\rho]_q-[\rho+2(\xi_1+\xi_2)-(N_1+N_2)]_q)K_\mathsf{R}(\eta,\xi). \qedhere
		\]
	\end{proof}

\section*{Declarations}
\subsection*{Competing interests}
The authors have no financial or proprietary interests in any material discussed in this article.

\end{document}